\newtheorem{theorem}{Theorem}[section]
\newtheorem{proposition}[theorem]{Proposition}
\newtheorem{lemma}[theorem]{Lemma}
\newtheorem{corollary}[theorem]{Corollary}
\theoremstyle{definition}
\newtheorem{definition}{Definition}[section]
\theoremstyle{remark}
\numberwithin{equation}{section}
\newcommand{\abs}[1]{\left\vert#1\right\vert}
\newcommand{\proin}[2]{\left<#1,#2\right>}
\begin{document}

\title[Fisher-Riemann geometry for nonparametric probability densities]{Fisher-Riemann geometry for nonparametric probability densities}


\author[]{Hugo Aimar}
\author[]{An\'ibal Chicco Ruiz}
\author[]{Ivana G\'{o}mez}


%
%

\begin{abstract}
In this article we aim to obtain the Fisher Riemann geodesics for nonparametric families of probability densities as a weak limit of the parametric case with increasing number of parameters.
\end{abstract}

\maketitle

\section{Introduction}\label{sec:introduction}
The Fisher information of a random variable is a useful tool for the estimation of parameters in parametric statistics. When the number of parameters describing the distributions of the random variables we are dealing with is larger than one, the Fisher information induces a remarkable Riemannian structure on the set of parameters. A simple but illustrative case is provided by the one dimensional Gaussian family, see \cite{CoSaStra05}. Let $U=\mathbb{R}^2_{+}=\{\bar{\theta}=(\theta_1,\theta_2): \theta_1\in\mathbb{R}, \theta_2>0\}$ be the upper half plane of $\mathbb{R}^2$. With $\theta_1$ the mean and $\theta_2$ the variance, we have a one to one correspondence between $U$ and the two parametric family $\{X\sim N(\theta_1,\theta_2)\}$. In other words, the random variables that have normal distribution with mean $\theta_1$ and variance $\theta_2$. In this case the metric that the Fisher information induces in $U$ is that of the hyperbolic Poincar\'{e} geometry. Of course the situation extends to any number of parameters and the geodesics that these Riemannian metrics induce in the set of parameters can be thought as trajectories joining a density (image) to another. In other words, we have a Fisher geodesic mass transport. This point of view has been used in some applications to image processing (see \cite{PeRa06}).

In this paper we aim to extend Fisher-Riemann geometry to non parametric sets of densities and to explore the behaviour of the corresponding geodesics. The starting point is the parametric case. Then by approximation of a general density by parametric ones we are able to establish and solve the corresponding Gauss geodesic equations. 

The paper is organized as follows. In Section~\ref{sec:theFisherInformation} we introduce the well known facts regarding Fisher information. Section~\ref{sec:TheRiemannianGeometryInTheSimplex} is devoted to introduce our particular discrete setting and to obtain the precise form of the geodesic equations in this case. Section~\ref{sec:SolutionsOfTheODEthatPreserveTheUnitSpeedGeodesicCondition} contains the result that provides solutions for the geodesic equations both in the continuous and discrete cases. Section~\ref{sec:FisherGeodesicTransportOfnDimensionalDensitiesAndOfTheirDyadicPixelations} contains main result of the paper regarding the convergence of the parametric geodesics to the geodesics corresponding to a given nonparametric density. In Section~\ref{secExamplesAndGraphics} we show examples and illustrations of the dynamics in different situations.
 
\section{The Fisher information}\label{sec:theFisherInformation}
A basic approach to Fisher Information from the point of view of Information Theory can be found in \cite{CoverThomasBook} and also \cite{NielsenNoticeAMS} for a recent account on Information Geometry. We shall roughly follow the lines in \cite{CoverThomasBook} to describe and define the basic concepts. Let $U$ be an open set in $\mathbb{R}^n$. We shall consider $U$ as the set of parameters $\theta=(\theta_1,\ldots,\theta_n)$ for a family $\mathcal{D}_U=\{\varphi(x,\theta):\theta\in U\}$ of probability densities in $\mathbb{R}^k$ with respect to Lebesgue measure $dx$ in $\mathbb{R}^k$. When $k=2$ a typical $\mathcal{D}_U$ is given by 
\begin{equation*}
	\varphi(x,\theta)=\varphi(x,\theta_1,\theta_2,\theta_3)=\frac{1}{\theta_3^2}\varphi\left(\frac{(x_1-\theta_1,x_2-\theta_2)}{\theta_3}\right)
\end{equation*}
with $\int_{\mathbb{R}^2}\varphi dx=1$, $\varphi\geq 0$ and $U=\{\theta\in\mathbb{R}^3: (\theta_1,\theta_2)\in\mathbb{R}^2, \theta_3>0\}$.

Given $U$, $\mathcal{D}_U$ and a random variable $X$ (or random vector) in $\mathbb{R}^k$ we may consider the new random variable $Y(\omega,\theta)=\varphi(X(\omega),\theta)$. The score is defined as a new random vector in $\mathbb{R}^n$ given by the $\theta$-gradient of $\log Y(\omega,\theta)$. Precisely,
\begin{align*}
	\mathcal{S}(\omega,\theta)&= \nabla_\theta\log Y(\omega,\theta)\\
	&= \left(\frac{\partial}{\partial\theta_1}\log Y(\omega,\theta),\ldots,\frac{\partial}{\partial\theta_n}\log Y(\omega,\theta)\right)\\
	&= \frac{1}{Y(\omega,\theta)}\left(\frac{\partial Y}{\partial\theta_1}(\omega,\theta),\ldots,\frac{\partial Y}{\partial\theta_n}(\omega,\theta)\right)\\
	&= \frac{1}{\varphi(X(\omega),\theta)}\left(\frac{\partial}{\partial\theta_1}\varphi(X(\omega),\theta),\ldots,\frac{\partial}{\partial\theta_n}\varphi(X(\omega),\theta)\right)\\
	&= \frac{\nabla_\theta\varphi(X(\omega),\theta)}{\varphi(X(\omega),\theta)}.
	\end{align*}
Of course some simple analytic conditions on the family $\mathcal{D}_U$ of densities are required in order to have a well defined score for the random variables distributed by densities in $\mathcal{D}_U$. First order smoothness in $\theta$ and non vanishing of the densities are the basic ones.
\begin{lemma}
	The expected value of the score $\mathcal{S}(\cdot,\theta)$ with respect to $\varphi(\cdot,\theta)$ vanishes.
\end{lemma}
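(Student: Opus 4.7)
The plan is to compute the expectation directly from the definition of the score and exploit the identity $\nabla_\theta \varphi / \varphi \cdot \varphi = \nabla_\theta \varphi$, after which the fact that each $\varphi(\cdot,\theta)$ is a probability density (so $\int \varphi(x,\theta)\,dx = 1$, a constant in $\theta$) will finish the job, provided we can swap the order of $\nabla_\theta$ and $\int_{\mathbb{R}^k}$.

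Concretely, I would first unwind the definition of expectation with respect to $\varphi(\cdot,\theta)$, writing
\begin{equation*}
    E_{\varphi(\cdot,\theta)}\bigl[\mathcal{S}(\cdot,\theta)\bigr] = \int_{\mathbb{R}^k} \frac{\nabla_\theta \varphi(x,\theta)}{\varphi(x,\theta)}\,\varphi(x,\theta)\,dx = \int_{\mathbb{R}^k} \nabla_\theta \varphi(x,\theta)\,dx,
\end{equation*}
where the cancellation is legitimate precisely because the non-vanishing hypothesis mentioned after the definition of the score guarantees that the quotient is well defined on the support, while $\nabla_\theta \varphi$ is still integrable off the support (it vanishes wherever $\varphi$ does, in a suitable sense). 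Then I would invoke a dominated-convergence-type interchange to write
\begin{equation*}
    \int_{\mathbb{R}^k} \nabla_\theta \varphi(x,\theta)\,dx = \nabla_\theta \int_{\mathbb{R}^k} \varphi(x,\theta)\,dx = \nabla_\theta (1) = 0,
\end{equation*}
which yields the claim componentwise.

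The only real obstacle is the interchange of the gradient $\nabla_\theta$ with the integral $\int_{\mathbb{R}^k} dx$, which is exactly the kind of step the author alludes to in the sentence about ``simple analytic conditions'' and ``first order smoothness in $\theta$.'' In a fully rigorous write-up one would require, for each $\theta_0 \in U$, a neighbourhood of $\theta_0$ on which $|\partial_{\theta_j} \varphi(x,\theta)|$ is bounded by an $x$-integrable dominating function, so that differentiation under the integral sign applies. Since the lemma is stated in the informal framework set up just before it, I would simply cite this hypothesis and carry out the swap; no further subtlety is needed, as the rest of the argument is essentially a single line of algebra.
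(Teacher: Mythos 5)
Your proposal is correct and follows exactly the same route as the paper's own sketch: cancel $\varphi$ against the denominator of the score, interchange $\nabla_\theta$ with $\int_{\mathbb{R}^k}\,dx$, and use $\int\varphi(x,\theta)\,dx=1$. Your additional remarks on justifying the differentiation under the integral sign merely make explicit the ``simple analytic conditions'' the paper leaves implicit.
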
 

\begin{proof}[Sketch of the proof]
The expected value of $\mathcal{S}(\cdot,\theta)$ with respect to the probability measure $\varphi(x,\theta)dx$ in $\mathbb{R}^k$ is given by the $n$-vector
\begin{align*}
	\int_{\mathbb{R}^k} \mathcal{S}(x,\theta)\varphi(x,\theta) dx &= \int_{\mathbb{R}^k} \frac{\nabla_\theta\varphi(x,\theta)}{\varphi(x,\theta)}\varphi(x,\theta) dx\\
	&= \nabla_\theta\left(\int_{\mathbb{R}^k}\varphi(x,\theta) dx\right)\\
	&= \nabla_\theta 1\\
	&= \bar{0} = (0,\ldots,0)\in\mathbb{R}^n.
\end{align*}	
\end{proof}
From de above Lemma, the covariance matrix of the score $\mathcal{S}(\cdot,\theta)$ with respect to the corresponding $\varphi(\cdot,\theta)$ density in $\mathcal{D}_U$ is given by the expected value of the $n\times n$ random matrix $\mathcal{S}(\omega,\theta)\mathcal{S}^T(\omega,\theta)$, where $T$ denotes transposition.

\begin{lemma}
	The $i, j$ entry of the $n\times n$ covariance matrix of the score $\mathcal{S}(\omega,\theta)$ with respect to $\varphi(\cdot,\theta)$ is given by 
	\begin{equation*}
		J_{ij}(\theta) = \int_{\mathbb{R}^k}\frac{\partial}{\partial\theta_i}\log\varphi(x,\theta) \frac{\partial}{\partial\theta_j}\log\varphi(x,\theta)\varphi(x,\theta) dx.
	\end{equation*}
\end{lemma}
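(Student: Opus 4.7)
The plan is to exploit the preceding lemma (which asserts $E_{\varphi}[\mathcal{S}(\cdot,\theta)] = \bar{0}$) so that the covariance matrix reduces to the second-moment matrix $E_{\varphi}[\mathcal{S}\mathcal{S}^T]$, and then to read off the $(i,j)$ entry by direct computation.

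More concretely, I would first recall that for any random vector $Z$ in $\mathbb{R}^n$ with mean $\mu$, the covariance matrix has $(i,j)$ entry $E[(Z_i-\mu_i)(Z_j-\mu_j)]$. Applying this to $Z = \mathcal{S}(\cdot,\theta)$ with $\mu = \bar{0}$ by the previous lemma, the $(i,j)$ entry of the covariance is simply $E_{\varphi(\cdot,\theta)}[\mathcal{S}_i(\cdot,\theta)\,\mathcal{S}_j(\cdot,\theta)]$, which is exactly the $(i,j)$ entry of $E_{\varphi}[\mathcal{S}\mathcal{S}^T]$ as stated in the text between the two lemmas.

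Next I would substitute the definition of the score into this expectation. From the formulas displayed at the beginning of the section, the $i$-th component of the score evaluated at a realization $X(\omega) = x$ is
\begin{equation*}
\mathcal{S}_i(x,\theta) = \frac{\partial}{\partial\theta_i}\log\varphi(x,\theta),
\end{equation*}
and similarly for the $j$-th component. Taking the expectation with respect to the probability measure $\varphi(x,\theta)\,dx$ on $\mathbb{R}^k$, I obtain
\begin{equation*}
J_{ij}(\theta) = \int_{\mathbb{R}^k}\frac{\partial}{\partial\theta_i}\log\varphi(x,\theta)\,\frac{\partial}{\partial\theta_j}\log\varphi(x,\theta)\,\varphi(x,\theta)\,dx,
\end{equation*}
which is the desired expression.

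There is no genuine obstacle: the whole content of the lemma is the combination of the centering given by the previous lemma with the very definition of the score. The only point requiring care is the implicit hypothesis, already mentioned right before the previous lemma, that $\varphi(\cdot,\theta)$ be positive and $C^1$ in $\theta$ and that differentiation under the integral sign be legitimate, so that $\log\varphi$ and its gradient are well defined and all displayed integrals exist.
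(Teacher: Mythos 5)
Your proposal is correct and follows exactly the paper's own argument: invoke the previous lemma to reduce the covariance to the second-moment matrix $E_{\varphi}[\mathcal{S}\mathcal{S}^T]$, then substitute $\mathcal{S}_i(x,\theta)=\frac{\partial}{\partial\theta_i}\log\varphi(x,\theta)$ and write out the expectation as the integral against $\varphi(x,\theta)\,dx$. Your closing remark about the implicit smoothness, positivity, and integrability hypotheses matches the caveats the paper states just before the lemmas.
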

\begin{proof}
	Since the $\varphi(\cdot,\theta)$-mean of  $\mathcal{S}(\omega,\theta)$ vanishes, we have that the $\varphi(\cdot,\theta)$ covariance matrix of $\varphi(\cdot,\theta)$ is given by the expected value of $\mathcal{S}(\omega,\theta)\cdot\mathcal{S}^T(\omega,\theta)$ with respect to $\varphi(\cdot,\theta)$, that is
	\begin{align*}
		\int_{\mathbb{R}^k}\mathcal{S}(x,\theta)\cdot\mathcal{S}^T(x,\theta)\varphi(x,\theta) dx
		&= \left(	\int_{\mathbb{R}^k}\mathcal{S}_i(x,\theta)\mathcal{S}_j(x,\theta)\varphi(x,\theta) dx\right)_{ij}\\
		&= \left(\int_{\mathbb{R}^k}\frac{\partial}{\partial\theta_i}\log\varphi(x,\theta) \frac{\partial}{\partial\theta_j}\log\varphi(x,\theta)\varphi(x,\theta) dx\right)_{ij}.
	\end{align*}
\end{proof}
Notice that all the above considerations can be done in any positive $\sigma$-finite measure space $(X,\mu)$ instead of $(\mathbb{R}^k,dx)$. 
\begin{definition}
	Let $U$ be an open set in $\mathbb{R}^n$. Let $(X,\mu)$ be a positive $\sigma$-finite measure space. Let $\mathcal{D}_U=\{\varphi(\cdot,\theta):\, \varphi(\cdot,\theta):X\to\mathbb{R}^+, \int_X \varphi(x,\theta)d\mu(x)=1, \theta\in U\}$ be a family of densities parametrized on $U$. The \textbf{Fisher Information Matrix} is the $n\times n$ matrix valued function $J$ defined in $U$ by 
	\begin{equation*}
		J_{ij}(\theta) = \int_{X}\frac{\partial}{\partial\theta_i}\log\varphi(x,\theta) \frac{\partial}{\partial\theta_j}\log\varphi(x,\theta)\varphi(x,\theta) d\mu(x),
	\end{equation*}
provided the smoothness and integrability required.
\end{definition}
In our further analysis we shall be concerned with special cases of dimensionaly increasing open sets $U$ and their eventual convergence as $n$ tends to infinity when the family $\mathcal{D}_U$ is drawn by projection of a general density.

Let us finish this section with the explicit computation of the matrix $J$ in the case of $X$ finite and $\mu$ the counting measure when every density is parametrized by its values. Let us precise the situation. Let $X=\{x_1,\ldots,x_{n},x_{n+1}\}$ be a finite set and $\mu=\sum_{i=1}^{n+1}\delta_{x_i}$, with the unit mass at the point $x_i$. Notice that if $g:X\to\mathbb{R}^+$ is a density in the sense that $\int_X g(x)d\mu(x)=1$, we have that $\sum_{i=1}^{n+1} g(x_i)=1$. Hence every such $g$ can be parametrized by its $n$-first values $g(x_1),\ldots,g(x_n)$, since $g(x_{n+1})$ can only be $1-\sum_{i=1}^{n} g(x_i)$. Let us write this remark in terms of the above introduced notation. Set $U=\{\theta=(\theta_1,\ldots,\theta_n)\in\mathbb{R}^n: \theta_i>0 \textrm{ for every }i \textrm{ and } \sum_{i=1}^n\theta_i<1\}$. Then the set $\mathcal{D}=\{g:X\to\mathbb{R}^+; g \textrm{ non vanishing density with respect to }\mu\}$ coincides with $\mathcal{D}_U$ the set of all parametric densities $\varphi(\cdot,\theta)$, $\theta\in U$ given by $\varphi(x_j,\theta)=\theta_j$; $j=1,2,\ldots,n$  and $\varphi(x_{n+1},\theta)=1-\sum_{j=1}^n\theta_j$.
\begin{proposition}\label{propo:matrixFisherInformation}
Let $X=\{x_1,\ldots,x_{n},x_{n+1}\}$ and $U$ be the $n$-dimensional open simplex $\{\theta=(\theta_1,\ldots,\theta_n)\in\mathbb{R}^n: \theta_i>0 \textrm{ for every }i=1,\ldots,n \textrm{ and } \sum_{i=1}^n\theta_i<1\}$. Let $\mathcal{D}_U=\{\varphi(\cdot,\theta): \theta\in U\}$, where $\varphi(\cdot,\theta):X\to\mathbb{R}$ is given by $\varphi(x_j,\theta)=\theta_j$ for $j=1,\ldots,n$ and $\varphi(x_{n+1},\theta)=1-\sum_{j=1}^n\theta_j$. Then, the Fisher information matrix associated to $\mathcal{D}_U$ is the $n\times n$ matrix function defined in $U$ by
	\begin{equation*}
		J(\theta)=J(\theta_1,\ldots,\theta_n)=\frac{1}{\theta_{n+1}}\bar{\bar{1}} + D\Bigl(\frac{1}{\theta_j}\Bigr),
	\end{equation*}
where $\theta_{n+1}=1-\sum_{i=1}^n\theta_i>0$, $\bar{\bar{1}}$ is the $n\times n$ matrix with its $n^2$ entries equal to one and $ D\bigl(\frac{1}{\theta_j}\bigr)$ is the $n\times n$  diagonal matrix
\begin{equation*}
\begin{pmatrix}
	\frac{1}{\theta_1} & 0 & \ldots & 0\\
	0 & \frac{1}{\theta_2} & \ldots & 0\\
	\vdots& \vdots & \ddots & \vdots \\
	0 & 0 & \ldots & \frac{1}{\theta_n}
\end{pmatrix}.	
\end{equation*}
\end{proposition}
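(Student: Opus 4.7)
The plan is a direct computation from the definition of $J_{ij}(\theta)$, using that the measure $\mu$ is counting and that every $\theta$-dependence enters through the identification $\varphi(x_j,\theta)=\theta_j$ for $j\leq n$ and $\varphi(x_{n+1},\theta)=1-\sum_{j=1}^n\theta_j$. Since the integral in the definition reduces to a finite sum of $n+1$ terms, the whole proof is essentially a single substitution followed by bookkeeping.

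First I would write out
\begin{equation*}
J_{ij}(\theta)=\sum_{k=1}^{n+1}\frac{\partial}{\partial\theta_i}\log\varphi(x_k,\theta)\,\frac{\partial}{\partial\theta_j}\log\varphi(x_k,\theta)\,\varphi(x_k,\theta),
\end{equation*}
and then compute each logarithmic derivative according to the two cases. For $k\in\{1,\ldots,n\}$ the function $\log\varphi(x_k,\theta)=\log\theta_k$ depends only on the single parameter $\theta_k$, so its derivative with respect to $\theta_i$ is $\delta_{ki}/\theta_k$. For $k=n+1$, on the other hand, $\log\varphi(x_{n+1},\theta)=\log(1-\sum_{l=1}^n\theta_l)$ depends on \emph{every} $\theta_i$, and its derivative with respect to each $\theta_i$ is the constant $-1/\theta_{n+1}$. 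This is the only place where the simplex constraint plays a role, and it is precisely what produces the rank-one piece of $J(\theta)$.

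Substituting, the term $k=n+1$ contributes $\frac{1}{\theta_{n+1}^2}\cdot\theta_{n+1}=\frac{1}{\theta_{n+1}}$ to every entry $J_{ij}$, giving the matrix $\frac{1}{\theta_{n+1}}\bar{\bar{1}}$. The sum over $k\leq n$ collapses because of the two Kronecker deltas: only $k=i=j$ survives, contributing $\frac{1}{\theta_i^2}\cdot\theta_i=\frac{1}{\theta_i}$ on the diagonal and $0$ off-diagonal, which is exactly $D(1/\theta_j)$. Adding the two pieces yields the claimed formula.

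There is no real obstacle here; the only thing to be careful about is not to double-count or mis-index when differentiating $\log\theta_{n+1}$ (remembering that $\theta_{n+1}$ is not a coordinate of $U$ but the affine function $1-\sum\theta_l$ of the $n$ genuine coordinates). Once that is kept straight, the proposition follows immediately from the definition, with no need for the preceding lemmas beyond the underlying framework that motivates the definition of $J$.
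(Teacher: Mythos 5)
Your computation is correct and follows essentially the same route as the paper's proof: both reduce the Fisher integral to a sum over the $n+1$ atoms, observe that the logarithmic derivative is $\delta_{ki}/\theta_k$ for $k\leq n$ and $-1/\theta_{n+1}$ at the last atom, and read off the rank-one term $\frac{1}{\theta_{n+1}}\bar{\bar{1}}$ plus the diagonal $D(1/\theta_j)$. The only difference is cosmetic bookkeeping (the paper encodes the derivatives via indicator functions in $x$ and treats diagonal and off-diagonal entries separately, while you sum over $k$ with Kronecker deltas).
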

\begin{proof}
In order to compute the entries $J_{ij}(\theta)$ of $J(\theta)$, let us start by the calculation  of the partial derivatives  $\frac{\partial}{\partial\theta_j}\log \varphi(x,\theta)$ for every $x\in X=\{x_1,\ldots,x_n,x_{n+1}\}$ and every $\theta=(\theta_1,\ldots,\theta_n)\in U$. Notice first that $\log \varphi(x,\theta)$ can be written as 
\begin{align*}
	\log \varphi(x,\theta)&= \sum_{i=1}^n\log\theta_i\mathds{1}_{\{x_i\}}(x) + \log\left(1-\sum_{k=1}^n\theta_k\right)\mathds{1}_{\{x_{n+1}\}}(x)\\
	&= \sum_{i=1}^{n+1}\log\theta_i\mathds{1}_{\{x_i\}}(x),
\end{align*}
with $\theta_{n+1}=\theta_{n+1}(\theta_1,\ldots,\theta_n)=1-\sum_{k=1}^n\theta_k$ and, as usual $\mathds{1}_E(x)$ the indicator function of the set $E\subset X$. Hence, for $j=1,2,\ldots,n$; $\frac{\partial}{\partial\theta_j}\log \varphi(x,\theta)=\frac{1}{\theta_j}\mathds{1}_{\{x_j\}}(x)-\frac{1}{\theta_{n+1}}\mathds{1}_{\{x_{n+1}\}}(x)$. So that 
\begin{align*}
	J_{jj}(\theta) &= \int_X \left(\frac{\partial}{\partial\theta_j}\log \varphi(x,\theta)\right)^2\varphi(x,\theta) d\mu(x)\\
	&= \int_X\left(\frac{1}{\theta_j^2}\mathds{1}_{\{x_j\}}(x)+\frac{1}{\theta_{n+1}^2}\mathds{1}_{\{x_{n+1}\}}(x)\right)\varphi(x,\theta) d\mu(x)\\
	&= \sum_{l=1}^{n+1}\left(\frac{1}{\theta_j^2}\mathds{1}_{\{x_j\}}(x_l)+\frac{1}{\theta_{n+1}^2}\mathds{1}_{\{x_{n+1}\}}(x_l)\right)\theta_l\\
	&= \frac{1}{\theta_j}+\frac{1}{\theta_{n+1}}.
\end{align*}
On the other hand, for $i\neq j$, $i, j=1,2,\ldots,n$, we have
\begin{align*}
	J_{ij}(\theta) &= \int_X \left(\frac{\partial}{\partial\theta_i}\log \varphi(x,\theta)\frac{\partial}{\partial\theta_j}\log \varphi(x,\theta)\right)\varphi(x,\theta) d\mu(x)\\
	&= \int_X\left(\frac{1}{\theta_i}\mathds{1}_{\{x_i\}}(x)-\frac{1}{\theta_{n+1}}\mathds{1}_{\{x_{n+1}\}}(x)\right) \left(\frac{1}{\theta_j}\mathds{1}_{\{x_j\}}(x)-\frac{1}{\theta_{n+1}}\mathds{1}_{\{x_{n+1}\}}(x)\right)\varphi(x,\theta) d\mu(x)\\
	&= \int_X \frac{1}{\theta_{n+1}^2}\mathds{1}_{\{x_{n+1}\}}(x)\varphi(x,\theta) d\mu(x)\\
	&= \frac{1}{\theta_{n+1}}.
\end{align*}
In other words, for $i, j=1,2,\ldots,n$ we have
\begin{equation*}
	J_{ij}(\theta) = \frac{\delta_{ij}}{\theta_j} + \frac{1}{\theta_{n+1}},
\end{equation*}
where  $\delta_{ij}$ the Kronecker notation for the identity matrix. In explicit $n\times n$ matrix form
\begin{equation*}
	J(\theta)=
	\begin{pmatrix}
		\frac{1}{\theta_1}+\frac{1}{\theta_{n+1}} & \frac{1}{\theta_{n+1}} & \cdots & \frac{1}{\theta_{n+1}}\\
		\frac{1}{\theta_{n+1}} & \frac{1}{\theta_2}+\frac{1}{\theta_{n+1}} & \cdots & \frac{1}{\theta_{n+1}}\\
		\vdots& \vdots & \ddots & \vdots \\
		\frac{1}{\theta_{n+1}} & \frac{1}{\theta_{n+1}} & \cdots & \frac{1}{\theta_n}+\frac{1}{\theta_{n+1}}
	\end{pmatrix}.
\end{equation*}
\end{proof}

\section{The Riemannian geometry in the simplex induced by $J(\theta)=\theta_{n+1}^{-1}\bar{\bar{{1}}}+D(\theta_j^{-1})$}\label{sec:TheRiemannianGeometryInTheSimplex}

Let $U$ be the open simplex $U=\{\theta\in\mathbb{R}^n:\theta_i>0, i=1,\ldots,\textcolor{red}{n+1}\}$ with $\theta_{n+1}=1-\sum_{i=1}^n \theta_i$, as before. Let $J(\theta)$ be the matrix function defined in $U$ by the Fisher Information obtained in Proposition~\ref{propo:matrixFisherInformation}. Explicitly,
\begin{equation*}
	J(\theta) = \frac{1}{\theta_{n+1}} \bar{\bar{1}}+D\Bigl(\frac{1}{\theta_j}\Bigr).
\end{equation*}
\begin{theorem}\label{thm:arcLengthFisherInformation}
	For each $\theta\in U$ the quadratic form in $\mathbb{R}^n\times\mathbb{R}^n$ given by $\proin{v}{w}_{J(\theta)}= \sum_{i=1}^n\sum_{j=1}^n v_iJ_{ij}(\theta)w_j$ defines a scalar product in $\mathbb{R}^n$. The couple $(U,\proin{v}{w}_{J(\theta)})$ is a $\mathscr{C}^\infty$ Riemannian manifold of dimension $n$. Moreover, if $\boldsymbol{\theta}:[0,1]\to U$ is a smooth curve in $U$, $\boldsymbol{\theta}(t)=(\theta_1(t),\ldots,\theta_n(t))$, then, with $\theta_{n+1}(t)=1-\sum_{i=1}^n \theta_i(t)$, the arc length is given by
	\begin{equation*}
		l_J(\boldsymbol{\theta}) = \int_0^1\sqrt{\proin{\dot{\boldsymbol{\theta}}(t)}{\dot{\boldsymbol{\theta}}(t)}_{J(\boldsymbol{\theta}(t))}}dt = \int_0^1\sqrt{\sum_{j=1}^{n+1}\frac{(\dot{\theta}_j(t))^2}{\theta_j(t)}} dt,
	\end{equation*}
where $\dot{\boldsymbol\theta}(t)=(\dot{\theta}_1(t),\ldots,\dot{\theta}_n(t))$ is the velocity of $\boldsymbol{\theta}$ at each time $t\in [0,1]$.
\end{theorem}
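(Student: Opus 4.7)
The plan is to verify three things: bilinearity/symmetry/positive-definiteness of $\proin{\cdot}{\cdot}_{J(\theta)}$; smooth dependence of $J(\theta)$ on $\theta$; and the explicit arc length formula. None of the pieces should be hard individually, and the only "clever" step is rewriting the cross term in the quadratic form by using the constraint that defines $\theta_{n+1}$.

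First I would expand the quadratic form using the explicit expression $J_{ij}(\theta) = \delta_{ij}/\theta_j + 1/\theta_{n+1}$ obtained in Proposition~\ref{propo:matrixFisherInformation}. For any $v\in\mathbb{R}^n$ this yields
\begin{equation*}
\proin{v}{v}_{J(\theta)} = \sum_{i=1}^n \frac{v_i^2}{\theta_i} + \frac{1}{\theta_{n+1}}\Bigl(\sum_{i=1}^n v_i\Bigr)^2,
\end{equation*}
which is a sum of non-negative terms because every $\theta_i$ with $i=1,\ldots,n+1$ is strictly positive on $U$. Vanishing forces each $v_i^2/\theta_i=0$, hence $v=0$, which gives positive-definiteness; symmetry is immediate from $J_{ij}=J_{ji}$, and bilinearity is built into the definition. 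Since $U$ is an open subset of $\mathbb{R}^n$ and the entries of $J(\theta)$ are rational functions of $\theta$ whose denominators $\theta_j$ and $\theta_{n+1}$ do not vanish on $U$, the assignment $\theta\mapsto J(\theta)$ is $\mathscr{C}^\infty$, so $(U,\proin{\cdot}{\cdot}_{J(\theta)})$ is a smooth Riemannian manifold of dimension $n$.

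For the arc length formula I would substitute $v=\dot{\boldsymbol\theta}(t)$ into the identity above to obtain
\begin{equation*}
\proin{\dot{\boldsymbol\theta}(t)}{\dot{\boldsymbol\theta}(t)}_{J(\boldsymbol\theta(t))} = \sum_{i=1}^n \frac{\dot\theta_i(t)^2}{\theta_i(t)} + \frac{1}{\theta_{n+1}(t)}\Bigl(\sum_{i=1}^n \dot\theta_i(t)\Bigr)^2.
\end{equation*}
The key observation is that differentiating the defining relation $\theta_{n+1}(t)=1-\sum_{i=1}^n\theta_i(t)$ gives $\dot\theta_{n+1}(t)=-\sum_{i=1}^n\dot\theta_i(t)$, so $\bigl(\sum_{i=1}^n\dot\theta_i(t)\bigr)^2=\dot\theta_{n+1}(t)^2$. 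Plugging this back in collapses the two sums into a single symmetric expression
\begin{equation*}
\proin{\dot{\boldsymbol\theta}(t)}{\dot{\boldsymbol\theta}(t)}_{J(\boldsymbol\theta(t))} = \sum_{j=1}^{n+1} \frac{\dot\theta_j(t)^2}{\theta_j(t)}.
\end{equation*}
Integrating the square root from $0$ to $1$ yields the stated formula for $l_J(\boldsymbol\theta)$.

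The only mild subtlety — what I would flag as the one non-mechanical step — is recognizing that the off-diagonal piece $\theta_{n+1}^{-1}\bar{\bar 1}$ of $J$ is precisely what is needed to promote the constrained sum over the first $n$ coordinates into an unconstrained, symmetric sum over all $n+1$ coordinates via the identity $\dot\theta_{n+1}=-\sum_{i\le n}\dot\theta_i$. Everything else is direct expansion and a routine verification that the denominators appearing in $J$ are smooth and non-zero on $U$.
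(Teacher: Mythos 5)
Your proof is correct and follows essentially the same route as the paper: expand $v^TJ(\theta)v$ into $\sum_{i=1}^n v_i^2/\theta_i + \theta_{n+1}^{-1}\bigl(\sum_{i=1}^n v_i\bigr)^2$ for positive-definiteness, then use $\dot\theta_{n+1}=-\sum_{i=1}^n\dot\theta_i$ to fold the cross term into the symmetric sum over $n+1$ coordinates. Your added remarks on smoothness of the metric entries are a harmless (and welcome) elaboration of what the paper leaves implicit.
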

\begin{proof}
	Since $J(\boldsymbol{\theta})$ is symmetric, we only need to show that it is also positive definite and to provide an explicit formula for $\proin{v}{v}_{J(\boldsymbol{\theta})}$. Take $v\in\mathbb{R}^n$, $v\neq 0$, then
	\begin{align*}
		v^T J(\boldsymbol{\theta})v &= v^T\left(\frac{1}{\theta_{n+1}} \bar{\bar{1}}+D\Bigl(\frac{1}{\theta_j}\Bigr)\right)v\\
		&= v^T\frac{1}{\theta_{n+1}} \bar{\bar{1}}v + v^TD\Bigl(\frac{1}{\theta_j}\Bigr)v\\
		&= \frac{1}{\theta_{n+1}}\left(\sum_{i=1}^n v_i\right)^2 + \sum_{i=1}^n \frac{v_i^2}{\theta_i}>0.
	\end{align*}
In order to prove the formula for the length of the curve $\boldsymbol{\theta}(t)$ we only have to observe that
$\frac{1}{\theta_{n+1}(t)}\left(\sum_{i=1}^n \dot{\theta}_i(t)\right)^2 + \sum_{i=1}^n \frac{\dot{\theta}_i^2(t)}{\theta_i(t)}=\frac{1}{\theta_{n+1}(t)}\left(\dot{\theta}_{n+1}(t)\right)^2 + \sum_{i=1}^n \frac{\dot{\theta}_i^2(t)}{\theta_i(t)}=\sum_{i=1}^{n+1} \frac{\dot{\theta}_i^2(t)}{\theta_i(t)}$.
\end{proof}
The following result is a direct consequence of Theorem~\ref{thm:arcLengthFisherInformation}.
\begin{corollary}
	Set $l(\boldsymbol{\theta})$ to denote the standard length of the curve $\boldsymbol{\theta}(t)$. Then
	\begin{enumerate}[(i)]
		\item $l(\boldsymbol{\theta})\leq l_J(\boldsymbol{\theta})$ for every $\boldsymbol{\theta}$;
		\item for every compact subset $V$ of $U$, there exists $C>0$ depending only on $V$ such that $l_J(\boldsymbol{\theta})\leq C l(\boldsymbol{\theta})$ for every $\boldsymbol{\theta}$ contained in $V$.
	\end{enumerate}
\end{corollary}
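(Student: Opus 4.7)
\smallskip
\noindent\textbf{Proof plan.} Both inequalities reduce, via the explicit formula
$l_J(\boldsymbol{\theta})=\int_0^1\sqrt{\sum_{j=1}^{n+1}\dot\theta_j^2/\theta_j}\,dt$
from Theorem~\ref{thm:arcLengthFisherInformation}, to pointwise comparisons between the two integrands $\sqrt{\sum_{j=1}^n\dot\theta_j^2}$ and $\sqrt{\sum_{j=1}^{n+1}\dot\theta_j^2/\theta_j}$. The strategy is therefore to bound these integrands against each other and then integrate.

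\smallskip
\noindent\textbf{(i)} The key observation is that every point of $U$ satisfies $0<\theta_j<1$ for $j=1,\dots,n+1$, since the $\theta_j$'s are positive and sum to $1$. Hence $1/\theta_j>1$ and the $j=n+1$ summand in the $J$-integrand is nonnegative, so that
$\sum_{j=1}^n\dot\theta_j^2\le\sum_{j=1}^n\dot\theta_j^2/\theta_j\le\sum_{j=1}^{n+1}\dot\theta_j^2/\theta_j$
pointwise in $t$. Taking square roots and integrating yields $l(\boldsymbol{\theta})\le l_J(\boldsymbol{\theta})$.

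\smallskip
\noindent\textbf{(ii)} For a compact $V\subset U$, continuity of the coordinate projections $\theta\mapsto\theta_j$ together with continuity of $\theta\mapsto\theta_{n+1}(\theta)=1-\sum_{i=1}^n\theta_i$ on $V$ produces a constant $c=c(V)>0$ with $\theta_j\ge c$ on $V$ for every $j=1,\dots,n+1$; this is the only place where the compactness of $V$ inside the \emph{open} simplex (in particular $\theta_{n+1}>0$) is used, and it is the one point that needs a little care. Then $\dot\theta_j^2/\theta_j\le c^{-1}\dot\theta_j^2$ for $j=1,\dots,n$, while the extra term is controlled by Cauchy--Schwarz,
\begin{equation*}
\dot\theta_{n+1}^2=\Bigl(\sum_{i=1}^n\dot\theta_i\Bigr)^2\le n\sum_{i=1}^n\dot\theta_i^2.
\end{equation*}
Adding these gives $\sum_{j=1}^{n+1}\dot\theta_j^2/\theta_j\le c^{-1}(1+n)\sum_{i=1}^n\dot\theta_i^2$, so taking square roots and integrating produces (ii) with $C=\sqrt{(n+1)/c}$, which depends only on $V$.

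\smallskip
\noindent\textbf{Main obstacle.} There is no real obstacle: the statement is a direct consequence of Theorem~\ref{thm:arcLengthFisherInformation}, as the corollary claims. The only subtlety is verifying that on a compact subset $V\subset U$ the auxiliary coordinate $\theta_{n+1}$ is also bounded below by a positive constant, which follows from the fact that $V$ lies inside the open simplex where $\sum_{i=1}^n\theta_i<1$.
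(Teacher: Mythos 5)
Your proof is correct and is exactly the elaboration the paper intends: the authors give no written proof, stating only that the corollary is ``a direct consequence of Theorem~\ref{thm:arcLengthFisherInformation},'' and your pointwise comparison of the two integrands (using $\theta_j<1$ for (i), and a uniform positive lower bound for $\theta_1,\dots,\theta_{n+1}$ on the compact set $V$ plus Cauchy--Schwarz for the $\dot\theta_{n+1}$ term for (ii)) is the natural way to fill that in. No gaps.
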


We shall now find the geodesic ODE system for the Riemannian manifold $(U,\proin{}{}_J)$. The following lemmas will be of help at writing the Christoffel symbols in our setting.
\begin{lemma}\label{lemma:productsymmetricmatrix}
Let $A$ be the $n\times n$ real matrix given by $A= \bar{\bar{1}} + D(c_i)$, where $D(c_i)$ is the diagonal $n\times n$ matrix with $c_i>0$, $i=1,2,\ldots,n$. Then $A^{-1}=(\textrm{det }A)^{-1} K$, where $		K_{ii}=\prod_{j\neq i}c_j + \sum_{l\neq i}\Bigl(\prod_{\substack{m\neq l\\m\neq i}}c_m\Bigr)$
and $K_{ij}= -\prod_{\substack{m\neq i\\ m\neq j}} c_m$, for $i\neq j$ and $det\, A=\prod_{i=1}^n c_i + \sum_{i=1}^n \prod_{j\neq i}^n c_j$.
\end{lemma}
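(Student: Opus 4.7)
The plan is to exploit the rank-one structure by writing $A = D(c_i) + \mathbf{1}\mathbf{1}^T$, where $\mathbf{1} = (1,\ldots,1)^T \in \mathbb{R}^n$ (so $\bar{\bar{1}} = \mathbf{1}\mathbf{1}^T$). Two classical identities for rank-one perturbations of an invertible matrix then deliver both halves of the statement almost for free: the matrix determinant lemma for $\det A$, and the Sherman--Morrison formula for $A^{-1}$.

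First, the matrix determinant lemma applied to $D(c_i) + \mathbf{1}\mathbf{1}^T$ gives
\[
\det A = \det D(c_i)\cdot\bigl(1 + \mathbf{1}^T D(c_i)^{-1}\mathbf{1}\bigr) = \prod_{i=1}^n c_i\cdot\Bigl(1 + \sum_{k=1}^n c_k^{-1}\Bigr) = \prod_{i=1}^n c_i + \sum_{i=1}^n \prod_{j\neq i} c_j,
\]
matching the stated formula. Next, Sherman--Morrison yields
\[
A^{-1} = D(c_i)^{-1} - \frac{D(c_i)^{-1}\mathbf{1}\mathbf{1}^T D(c_i)^{-1}}{1 + \sum_{k=1}^n c_k^{-1}},
\]
so that entrywise
\[
(A^{-1})_{ij} = \frac{\delta_{ij}}{c_i} - \frac{1}{c_i c_j\bigl(1 + \sum_{k=1}^n c_k^{-1}\bigr)}.
\]

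Finally, I would multiply by $\det A$ and identify the result with the claimed $K$. For $i \neq j$, the off-diagonal entry collapses immediately to $-\prod_{m\neq i,\,m\neq j} c_m = K_{ij}$. For the diagonal case, expanding $\prod_k c_k\bigl(1 + \sum_k c_k^{-1}\bigr)\cdot(A^{-1})_{ii}$ produces the sum $\prod_{j\neq i}c_j + \sum_{l=1}^n \prod_{m\neq i,\,m\neq l} c_m$ minus the $1/c_i^2$-term $\prod_{k\neq i}c_k/c_i$; the summand with $l = i$ precisely cancels this subtracted term, leaving $\prod_{j\neq i}c_j + \sum_{l\neq i}\prod_{m\neq i,\,m\neq l} c_m = K_{ii}$.

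The main (and essentially the only) delicate point is this bookkeeping in the diagonal case: one must spot that the $l=i$ contribution in $\sum_l\prod_{k\neq l}c_k$ is exactly what absorbs the Sherman--Morrison $1/c_i^2$ term, restricting the outer sum in $K_{ii}$ to $l\neq i$. Everything else is a mechanical application of the two classical identities. An alternative, purely elementary route would be to verify $AK = (\det A)I_n$ directly by cofactor combinatorics, but this is considerably more cumbersome and gives less structural insight into the shape of the formula.
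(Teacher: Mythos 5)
Your proof is correct, and it takes a genuinely different route from the paper. The paper proves the determinant formula by induction on $n$, expanding along the first row and carefully tracking the sign changes coming from row permutations of the minors $A_{1i}$ (this forces the authors to establish the formula for $c_i\geq 0$, since those minors have a zero on the diagonal); it then verifies the inverse by brute force, computing $(KA)_{ij}$ and $(KA)_{ii}$ entrywise and checking they equal $0$ and $\det A$ respectively. You instead recognize $\bar{\bar{1}}=\mathbf{1}\mathbf{1}^T$ as a rank-one perturbation of $D(c_i)$ and invoke the matrix determinant lemma and Sherman--Morrison; your entrywise identification of $(\det A)\,(A^{-1})_{ij}$ with $K_{ij}$, including the cancellation of the $l=i$ term against the $1/c_i^2$ contribution in the diagonal case, is exactly right. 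Your approach is shorter and explains structurally why $K$ has the form it does, at the cost of importing two classical identities whose hypotheses ($D$ invertible and $1+\sum_k c_k^{-1}\neq 0$) you should note are guaranteed by $c_i>0$; the paper's argument is longer but entirely self-contained and elementary. Both are complete proofs of the lemma as stated.
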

\begin{proof}
Let us first prove inductively that $det\, A=\prod_{i=1}^n c_i + \sum_{i=1}^n \prod_{j\neq i}^n c_j$ for $c_i\geq 0$. Case $n=2$ is simple: $det\,\begin{pmatrix}
	1+c_1 & 1\\ 1 & 1+c_2
\end{pmatrix}=(1+c_1)(1+c_2)-1=c_1 c_2+c_1+c_2$, as desired. Consider the $n\times n$ matrix $A=\bar{\bar{1}}+D(\bar{c})$ and suppose that the formula holds for the case of $A$ of dimension  $(n-1)\times (n-1)$. Hence
\begin{equation*}
	det\, A = \sum_{i=1}^n a_{1i}(-1)^{1+i}M_{1i} = (1+c_1)M_{11}+\sum_{i=2}^{n}(-1)^{1+i}M_{1i},
\end{equation*}
where $M_{1i}= det\,(A_{1i})$ and $A_{1i}$ is the $(n-1)$-submatrix of $A$ obtained by removing the first row and the $i$-th column of $A$. Observe that $A_{11}$ is a $(n-1)\times (n-1)$ matrix of type $\bar{\bar{1}}+D(\bar{c})$, so that $M_{11}=\prod_{i=2}^n c_i + \sum_{i=2}^n\prod_{\substack{j\neq i\\ j\neq 1}} c_j$. Also, $A_{12}$ is of type $\bar{\bar{1}}+D(\bar{c})$, but with diagonal $(0,c_3,\ldots,c_n)$, then again by inductive hypothesis, $M_{12}=\prod_{\substack{j\neq 1\\ j\neq 2}} c_j$. On the other hand, $A_{13}$ is of type $\bar{\bar{1}}+D(\bar{c})$ up to one row permutation, with diagonal $(0,c_2,c_4,\ldots,c_n)$, obtaining $M_{13}=-\prod_{\substack{j\neq 1\\ j\neq 3}} c_j$. In general, for $i=2,\ldots,n$, the submatrix $A_{1i}$ is of type $\bar{\bar{1}}+D(\bar{c})$ with diagonal $(0,c_2,\ldots,c_{i-1},c_{i+1},\ldots,c_n)$ up to $i-2$ row permutations, which gives $M_{1i}=(-1)^{i-2}\prod_{\substack{j\neq i\\ j\neq 1}} c_j$. Then
\begin{align*}
	det\,A &= (1+c_1)\Bigl(\prod_{i=2}^n c_i + \sum_{i=2}^n\prod_{\substack{j\neq i\\ j\neq 1}} c_j\Bigr) + \sum_{i=2}^n(-1)\prod_{\substack{j\neq i\\ j\neq 1}} c_j\\
	&= \prod_{i=2}^n c_i + \prod_{i=1}^n c_i + \sum_{i=2}^n\prod_{j\neq i} c_j\\
	&= \prod_{i=1}^n c_i + \sum_{i=1}^n\prod_{j\neq i}c_j.
\end{align*}

Let us now prove that $(det\, A)^{-1}K$ is the inverse of $A$. Since both, $A$ and $K$ are symmetric, we only need to compute $KA$. Take first $i\neq j$, then
\begin{align*}
(KA)_{ij} &= \sum_{l=1}^n K_{il}A_{lj}\\
&= K_{ii}A_{ij}+K_{ij}A_{jj}-\sum_{\substack{l\neq i\\ l\neq j}}\Bigl(\prod_{\substack{m\neq i\\ m\neq l}}c_m\Bigr)\\
&= \prod_{p\neq i}c_p + \sum_{l\neq i}\Bigl(\prod_{\substack{m\neq l\\ m\neq i}}c_m\Bigr)-\Bigl(\prod_{\substack{m\neq i\\ m\neq j}}c_m\Bigr)(1+c_j)-\sum_{\substack{l\neq i\\ l\neq j}}\Bigl(\prod_{\substack{m\neq i\\ m\neq l}} c_m\Bigr)\\
&= \prod_{p\neq i} c_p + \Bigl[\sum_{\substack{l\neq i\\ l\neq j}}\prod_{\substack{m\neq l\\ m\neq i}} c_m + \prod_{\substack{m\neq j\\ m\neq i}} c_m\Bigr] - \prod_{\substack{m\neq i\\ m\neq j}} c_m - \prod_{m\neq i} c_m - \sum_{\substack{l\neq i\\ l\neq j}}\Bigl(\prod_{\substack{m\neq i\\ m\neq l}}c_m\Bigr)\\
&= 0.
\end{align*}
On the other hand,
\begin{align*}
(KA)_{ii} &= \sum_{l=1}^n K_{il} A_{li}\\
&= K_{ii}A_{ii}+\sum_{l\neq i} K_{il}\\
&=\Bigl(\prod_{j\neq i} c_j + \sum_{l\neq i}\Bigl(\prod_{\substack{m\neq l\\ m\neq i}} c_m\Bigr)\Bigr) (1+c_i) - \sum_{l\neq i}\Bigl(\prod_{\substack{m\neq i\\ m\neq l}} c_m\Bigr)\\
&= \prod_{j\neq i} c_j + \sum_{l\neq i}\Bigl(\prod_{\substack{m\neq l\\ m\neq i}} c_m\Bigr) + \prod_{j} c_j + \sum_{l\neq i}\Bigl(\prod_{m\neq l} c_m\Bigr) - \sum_{l\neq i}\Bigl(\prod_{\substack{m\neq i\\ m\neq l}} c_m\Bigr)\\
&=  \prod_{j} c_j + \sum_{l} \Bigl(\prod_{m\neq l} c_m\Bigr)\\
&= \textrm{det} A,
\end{align*}
for every $i=1,\ldots,n$. Then $(\textrm{det}A)^{-1}K$ is the inverse of $A$.
\end{proof}

Let us now apply the above lemma in order to obtain the inverse of the Riemannian matrix $J(\theta)$ for $\theta\in U$, which as usual we shall denote by $J=(g_{ij})$.
\begin{lemma}\label{lemma:inverseRiemannianMatrixJ}
For $\theta\in U$, $\theta=(\theta_1,\ldots,\theta_n)$, set $\theta_{n+1}=1-\sum_{i=1}^n\theta_i$. Then, the inverse of the metric matrix $J(\theta)$, $J^{-1}(\theta)=(g^{ij}(\theta))_{i,j=1,\ldots,n}$ is given by $g^{ii}=\theta_i(1-\theta_i)$, $i=1,\ldots,n$ and $g^{ij}=-\theta_i\theta_j$ for $i\neq j$. Or, in terms of the Kronecker delta, $g^{ij}=\theta_i(\delta_{ij}-\theta_j)$.
\end{lemma}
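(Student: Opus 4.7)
The plan is to invoke Lemma~\ref{lemma:productsymmetricmatrix} after factoring $J(\theta)$ into the form $\bar{\bar{1}} + D(c_i)$. Write
\begin{equation*}
J(\theta) = \frac{1}{\theta_{n+1}}\bar{\bar{1}} + D\Bigl(\frac{1}{\theta_j}\Bigr) = \frac{1}{\theta_{n+1}}\Bigl(\bar{\bar{1}} + D\Bigl(\frac{\theta_{n+1}}{\theta_j}\Bigr)\Bigr),
\end{equation*}
so with $c_i = \theta_{n+1}/\theta_i$ and $A = \bar{\bar{1}} + D(c_i)$ we have $J(\theta)^{-1} = \theta_{n+1}\,A^{-1}$. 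First I would compute $\det A$: since $\prod_{i=1}^n c_i = \theta_{n+1}^n/\prod_i \theta_i$ and $\prod_{j\neq i}c_j = \theta_{n+1}^{n-1}/\prod_{j\neq i}\theta_j$, Lemma~\ref{lemma:productsymmetricmatrix} gives
\begin{equation*}
\det A = \frac{\theta_{n+1}^{n-1}}{\prod_{i=1}^n \theta_i}\Bigl(\theta_{n+1} + \sum_{i=1}^n \theta_i\Bigr) = \frac{\theta_{n+1}^{n-1}}{\prod_{i=1}^n \theta_i},
\end{equation*}
because $\theta_{n+1} + \sum_{i=1}^n \theta_i = 1$ by definition of $\theta_{n+1}$. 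This key cancellation is what makes the final formula clean.

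Next I would substitute into the expressions for $K_{ij}$ from the lemma. For the off-diagonal entries, $K_{ij} = -\theta_{n+1}^{n-2}/\prod_{m\neq i,j}\theta_m$, hence
\begin{equation*}
g^{ij} = \theta_{n+1}(\det A)^{-1}K_{ij} = -\theta_i\theta_j \qquad (i\neq j).
\end{equation*}
For the diagonal entries, a similar grouping gives $K_{ii} = \theta_{n+1}^{n-2}(1-\theta_i)/\prod_{j\neq i}\theta_j$ after using $\theta_{n+1} + \sum_{l\neq i}\theta_l = 1 - \theta_i$, and therefore $g^{ii} = \theta_i(1-\theta_i)$, matching the Kronecker-delta expression $g^{ij}=\theta_i(\delta_{ij}-\theta_j)$.

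As a sanity check (and perhaps a cleaner alternative proof), I would verify directly that $JG = I$, where $G=(g^{ij})$ is the proposed inverse. Writing $J_{ij} = \delta_{ij}/\theta_j + 1/\theta_{n+1}$ and computing
\begin{equation*}
(JG)_{ik} = \sum_{j=1}^n\Bigl(\tfrac{\delta_{ij}}{\theta_j}+\tfrac{1}{\theta_{n+1}}\Bigr)\theta_j(\delta_{jk}-\theta_k) = (\delta_{ik}-\theta_k) + \tfrac{\theta_k}{\theta_{n+1}}\Bigl(1-\sum_{j=1}^n\theta_j\Bigr) = \delta_{ik},
\end{equation*}
where the last equality uses $1-\sum_{j=1}^n\theta_j = \theta_{n+1}$. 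I do not anticipate a serious obstacle here: the only subtlety is recognising the factorisation that makes Lemma~\ref{lemma:productsymmetricmatrix} applicable and keeping track of the identity $\sum_{i=1}^{n+1}\theta_i=1$, which is what produces the collapse of $\det A$ and of the sums inside $K_{ii}$.
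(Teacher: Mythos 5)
Your proof is correct and follows essentially the same route as the paper: the same factorisation $\theta_{n+1}J(\theta)=\bar{\bar{1}}+D(\theta_{n+1}/\theta_j)$, the same application of Lemma~\ref{lemma:productsymmetricmatrix} with $c_j=\theta_{n+1}/\theta_j$, and the same cancellations via $\sum_{i=1}^{n+1}\theta_i=1$. Your closing direct verification that $JG=I$ is not in the paper and would in fact stand alone as a shorter, self-contained proof, but as written it is just a confirmation of the same result.
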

\begin{proof}
Since $J(\theta)=\frac{1}{\theta_{n+1}}\bar{\bar{1}} +D(\tfrac{1}{\theta_j})$, we have that $\theta_{n+1} J(\theta) = \bar{\bar{1}} + D(\tfrac{\theta_{n+1}}{\theta_j})$ and we can apply Lemma~\ref{lemma:productsymmetricmatrix} with $c_j=\tfrac{\theta_{n+1}}{\theta_j}$. Set $M(\theta)=\prod_{j=1}^{n+1}\frac{\theta_{n+1}}{\theta_j}$. Then, with $A=\theta_{n+1} J(\theta)$, we have
\begin{align*}
\textrm{det }A &= M(\theta) + \sum_{l=1}^{n}\frac{\theta_l}{\theta_{n+1}} M(\theta)\\
&= M(\theta)\biggl(1+\frac{1}{\theta_{n+1}}\sum_{l=1}^{n}\theta_l\biggr) \\
&=M(\theta)\biggl(1+\frac{1-\theta_{n+1}}{\theta_{n+1}}\biggr)\\
&= \frac{M(\theta)}{\theta_{n+1}}.
\end{align*}
On the other hand,
	\begin{align*}
		K_{ii} &= \prod_{j\neq i}\frac{\theta_{n+1}}{\theta_j} + \sum_{l\neq i}\biggl(\prod_{\substack{m\neq l\\ m\neq i}}\frac{\theta_{n+1}}{\theta_m}\biggr)\\
		&= \frac{\theta_i}{\theta_{n+1}} M(\theta) + \sum_{l\neq i}\frac{\theta_l}{\theta_{n+1}}\frac{\theta_i}{\theta_{n+1}} M(\theta)\\
		&= \frac{M(\theta)}{\theta_{n+1}}\biggl(\theta_i +\frac{\theta_i}{\theta_{n+1}}\sum_{l\neq i}\theta_l\biggr)\\
		&= \frac{M(\theta)}{\theta_{n+1}}\theta_i\biggl(1+\frac{1-\theta_{n+1}-\theta_i}{\theta_{n+1}}\biggr)\\
		&= \frac{M(\theta)}{\theta_{n+1}}\theta_i \biggl(\frac{\theta_{n+1} + 1 -\theta_{n+1} - \theta_i}{\theta_{n+1}}\biggr) \\
		&=\frac{M(\theta)}{\theta_{n+1}}\frac{\theta_i(1-\theta_i)}{\theta_{n+1}}.
	\end{align*}
So that, if $A^{ii}$ denote the diagonal entries of $A^{-1}$, we have that
\begin{equation*}
	A^{ii} = \frac{\theta_i(1-\theta_i)}{\theta_{n+1}}.
\end{equation*}
For $i\neq j$ we have from Lemma~\ref{lemma:productsymmetricmatrix},
\begin{equation*}
K_{ij}=-\prod_{\substack{m\neq i\\ m\neq j}}\frac{\theta_{n+1}}{\theta_m} = -\frac{\theta_i}{\theta_{n+1}}\frac{\theta_j}{\theta_{n+1}} M(\theta).
\end{equation*}
Hence, $A^{ij}=-\frac{\theta_i\theta_j}{\theta_{n+1}}$.	So that $g^{ij}=\theta_{n+1} A^{ij}$, then $g^{ii}(\theta)=\theta_i(1-\theta_i)$ and $g^{ij}=-\theta_i\theta_j$ for $i\neq j$. Which can be written as $g^{ij}=\theta_i(\delta_{ij}-\theta_j)$ in terms of the Kronecker symbols $\delta_{ij}$.
\end{proof}

The above lemmas allows us to find the explicit $n\times n$ ODE system for the geodesics in $U$ induced by $J$. Recall (see do Carmo \cite{DoCarmobook}) that in general the geodesic system generated by $J=(g_{ij})$, $i,j =1,\ldots,n$ is given by
\begin{equation*}
	\frac{d^2\theta_k}{dt^2} + \sum_{i=1}^{n}\sum_{j=1}^{n} \Gamma^k_{ij} \frac{d\theta_i}{dt}\frac{d\theta_j}{dt}=0;\quad k=1,\ldots,n,
\end{equation*}
where the Christoffel symbols $\Gamma^k_{ij}$ are given by
\begin{equation*}
	\Gamma^k_{ij}=\sum_{l=1}^{n}\Gamma_{ijl} g^{lk},
\end{equation*}
with $(g^{lk})_{l,k=1,\ldots,n}=J^{-1}$, $\Gamma_{ijl}=\frac{1}{2}(g_{jl,i}+g_{li,j}-g_{ij,l})$ and $g_{jl,i}=\frac{\partial}{\partial\theta_i} g_{jl}$. In the next result we obtain the Christoffel symbols and the geodesic system in our particular case.
\begin{proposition}\label{propo:geodesicsystem}
Let $U=\{\theta=(\theta_1,\ldots,\theta_n): 0<\theta_i<1 \textrm{ for every } i \textrm{ and } 0<\sum_{j=1}^{n}\theta_j<1\}$, $\theta_{n+1}=1-\sum_{i=1}^{n}\theta_i$, $J(\theta)=\frac{1}{\theta_{n+1}}\bar{\bar{1}}+D(\frac{1}{\theta_j})$ and $\proin{u}{v}_{J(\theta)}=u^T J(\theta) v$. Then, for $k=1,\ldots,n$, we have
\begin{enumerate}
\item $\Gamma^k_{ij}=\frac{1}{2}\left[\frac{\theta_k}{\theta_{n+1}}-\frac{1}{\theta_i}\delta_{ij}\delta_{jk}+\frac{\theta_k}{\theta_i}\delta_{ij}\right]$; and
\item the geodesic system is given by
	\begin{equation*}
			2\frac{d^2\theta_k}{dt^2} + \frac{\theta_k}{\theta_{n+1}}\left(\sum_{i=1}^{n}\frac{d\theta_i}{dt}\right)^2-\frac{1}{\theta_k}\left(\frac{d\theta_k}{dt}\right)^2 + \theta_k\sum_{j=1}^{n}\frac{1}{\theta_j}\left(\frac{d\theta_j}{dt}\right)^2=0.
	\end{equation*}
\end{enumerate}
\end{proposition}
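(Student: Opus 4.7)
The plan is to derive both parts by direct computation from the definition of the Christoffel symbols, using Lemma~\ref{lemma:inverseRiemannianMatrixJ} for the inverse metric $g^{lk}=\theta_l(\delta_{lk}-\theta_k)$.

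First I would compute the partials of $g_{jl}=\frac{1}{\theta_{n+1}}+\frac{\delta_{jl}}{\theta_j}$. Since $\theta_{n+1}=1-\sum_k\theta_k$, one has $\partial_i\theta_{n+1}=-1$, so
\begin{equation*}
g_{jl,i} \;=\; \frac{1}{\theta_{n+1}^2} \;-\; \delta_{jl}\,\frac{\delta_{ij}}{\theta_j^2}.
\end{equation*}
The symmetric combination $\Gamma_{ijl}=\tfrac{1}{2}(g_{jl,i}+g_{li,j}-g_{ij,l})$ then splits into a constant-in-$l$ piece $\frac{1}{2\theta_{n+1}^2}$ and three Kronecker terms. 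I would keep these four contributions separate for the raising step.

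Next I would raise the index via $\Gamma^k_{ij}=\sum_{l}g^{lk}\Gamma_{ijl}$. The one nontrivial identity needed is
\begin{equation*}
\sum_{l=1}^{n}g^{lk} \;=\; \sum_{l}\theta_l(\delta_{lk}-\theta_k) \;=\; \theta_k-\theta_k\sum_{l=1}^{n}\theta_l \;=\; \theta_k\theta_{n+1},
\end{equation*}
which handles the $\frac{1}{\theta_{n+1}^2}$ piece, yielding $\frac{\theta_k}{\theta_{n+1}}$ from each of the three terms; the constant-in-$l$ contributions combine as $\tfrac{1}{2}(1+1-1)\frac{\theta_k}{\theta_{n+1}}=\tfrac{1}{2}\frac{\theta_k}{\theta_{n+1}}$. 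For the remaining Kronecker terms, each collapses the $l$-sum to a single value, producing factors of the form $g^{ik}/\theta_i^2=(\delta_{ik}-\theta_k)/\theta_i$ (valid when $i=j$ enforced by the surviving $\delta_{ij}$). Carefully collecting these — and observing that the contributions from $g_{jl,i}$ and $g_{li,j}$ are equal by the symmetry $i\leftrightarrow j$ induced by the $\delta_{ij}$ factor — yields
\begin{equation*}
\Gamma^k_{ij} \;=\; \tfrac{1}{2}\!\left[\frac{\theta_k}{\theta_{n+1}}-\frac{\delta_{ij}\delta_{jk}}{\theta_i}+\frac{\delta_{ij}\theta_k}{\theta_i}\right],
\end{equation*}
establishing (1).

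Finally, for (2) I would substitute into $\ddot\theta_k+\sum_{i,j}\Gamma^k_{ij}\dot\theta_i\dot\theta_j=0$. The three terms of $\Gamma^k_{ij}$ become, respectively, $\tfrac{1}{2}\frac{\theta_k}{\theta_{n+1}}\big(\sum_i\dot\theta_i\big)^2$, $-\tfrac{1}{2}\frac{\dot\theta_k^2}{\theta_k}$ (only $i=j=k$ survives), and $\tfrac{1}{2}\theta_k\sum_j\frac{\dot\theta_j^2}{\theta_j}$ (only $i=j$ survives). Multiplying through by $2$ gives the stated geodesic system.

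The computation is essentially bookkeeping; the only conceptual point requiring care is the cancellation between the three symmetrised terms of $\Gamma_{ijl}$ — specifically, verifying that the $g_{jl,i}$ and $g_{li,j}$ contributions are in fact identical after contraction with $g^{lk}$ (since $\delta_{ij}$ makes the apparent $i/j$ asymmetry vanish), while $g_{ij,l}$ contributes with the opposite sign and a different Kronecker pattern. This is the step where a sign or index error is most likely, and I would double-check it by specialising to $n=1$ (where $U=(0,1)$ and the metric reduces to $\frac{1}{\theta_1}+\frac{1}{1-\theta_1}$) before trusting the general formula.
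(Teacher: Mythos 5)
Your proposal is correct and follows essentially the same route as the paper: compute $g_{jl,i}$, form the first-kind symbols, raise the index with $g^{lk}=\theta_l(\delta_{lk}-\theta_k)$ using the key contraction $\sum_l g^{lk}=\theta_k\theta_{n+1}$, and then collapse the Kronecker sums in the quadratic form to obtain the geodesic system. The bookkeeping you flag as delicate (two equal contributions from $g_{jl,i}$ and $g_{li,j}$ versus one opposite-sign contribution from $g_{ij,l}$, leaving a net $-\tfrac{1}{2}\delta_{ij}(\delta_{ik}-\theta_k)/\theta_i$) comes out exactly as in the paper's computation.
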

\begin{proof}
Let us start by computing the Christoffel symbols of the first kind
$\Gamma_{ijl}=\frac{1}{2}(g_{jl,i}+g_{li,j}-g_{ij,l})$.	
Notice first that
\begin{align*}
	g_{jl,i} &= \frac{\partial}{\partial\theta_i} g_{jl}\\
	&= \frac{\partial}{\partial\theta_i} \left(\frac{1}{\theta_{n+1}} + \frac{\delta_{jl}}{\theta_j}\right)\\
	&=\frac{1}{\theta_{n+1}^2} - \delta_{jl}\delta_{ji}\frac{1}{\theta_i^2}.
\end{align*}
Hence
\begin{align*}
	\Gamma_{ijl} &= \frac{1}{2}\left(g_{jl,i}+g_{li,j}-g_{ij,l}\right)\\
	&= \frac{1}{2}\left(\frac{1}{\theta_{n+1}^2} - \delta_{jl}\delta_{ji}\frac{1}{\theta_i^2}- \delta_{li}\delta_{lj}\frac{1}{\theta_j^2} + \delta_{ij}\delta_{il}\frac{1}{\theta_l^2}\right).
\end{align*}
So that $\Gamma_{ijl}=\frac{1}{2}\frac{1}{\theta_{n+1}^2}$, for $(i,j,l)\notin\Delta$ the diagonal of $\{1,2,\ldots,n\}^3$ and $\Gamma_{iii}=\frac{1}{2}\Bigl(\frac{1}{\theta_{n+1}^2}-\frac{1}{\theta_i^2}\Bigr)$ for every $i\in \{1,\ldots,n\}$. Let us now compute the Christoffel symbols of the second kind. Recall that for $i,j,k=1,\ldots,n$, the Christoffel symbol of the second kind is given by
$\Gamma_{ij}^k = \sum_{l=1}^{n} \Gamma_{ijl} g^{lk}$, where $\Gamma_{ijl}$ are the Christoffel symbols provided by $J$ and $(g^{lk})_{l,k=1,\ldots,n}$ is the inverse of $J$. From Lemma~\ref{lemma:inverseRiemannianMatrixJ} we have $g^{lk} = \theta_l (\delta_{lk} - \theta_k)$. Hence
\begin{align*}
	\Gamma_{ij}^k &= \sum_{l=1}^{n} \Gamma_{ijl} g^{lk}\\
	&= \frac{1}{2} \sum_{l=1}^{n} \Bigl(\frac{1}{\theta_{n+1}^2} - \frac{1}{\theta_i^2}\delta_{jl}\delta_{ji}- \frac{1}{\theta_j^2}\delta_{li}\delta_{lj} + \frac{1}{\theta_l^2}\delta_{ij}\delta_{il}\Bigr)\theta_l(\delta_{lk} - \theta_k)\\
	&= \frac{1}{2} \sum_{l=1}^{n}\theta_l \left[
	\frac{1}{\theta_{n+1}^2}\delta_{lk} - \frac{1}{\theta_i^2}\delta_{jl}\delta_{ji}\delta_{lk}- \frac{1}{\theta_j^2}\delta_{li}\delta_{lj}\delta_{lk}\right.\\
	&\phantom{\frac{1}{2} \sum_{l=1}^{n}}
	+ \frac{1}{\theta_l^2}\delta_{ij}\delta_{il}\delta_{lk} - \frac{\theta_k}{\theta_{n+1}^2} + \frac{\theta_k}{\theta_i^2}\delta_{jl}\delta_{ji}\\
	&\phantom{\frac{1}{2} \sum_{l=1}^{n}} \left.+ \frac{\theta_k}{\theta_j^2}\delta_{li}\delta_{lj} - \frac{\theta_k}{\theta_l^2}\delta_{ij}\delta_{il}
	\right]\\
	&= \frac{1}{2}\left[\frac{\theta_k}{\theta_{n+1}^2} -\frac{1}{\theta_i^2}\delta_{ji}\sum_{l=1}^{n}\theta_l \delta_{jl}\delta_{lk}- \frac{1}{\theta_j^2}\sum_{l=1}^{n}\theta_l\delta_{li}\delta_{lj}\delta_{lk}\right.\\
	&\phantom{\frac{1}{2} \sum_{l=1}^{n}} + \delta_{ij}\sum_{l=1}^{n} \frac{\theta_l}{\theta_l^2}\delta_{il}\delta_{lk} - \frac{\theta_k(1-\theta_{n+1})}{\theta_{n+1}^2}\\
	&\phantom{\frac{1}{2} \sum_{l=1}^{n}} +\frac{\theta_k}{\theta_i^2}\delta_{ji}\theta_j + \frac{\theta_k}{\theta_j^2}\sum_{l=1}^{n}\theta_l\delta_{li}\delta_{lj}\\
	&\phantom{\frac{1}{2} \sum_{l=1}^{n}}\left. - \theta_k\delta_{ij}\frac{1}{\theta_i}\right],
\end{align*}
then
\begin{align*}
	2\Gamma_{ij}^{k} &= \frac{\theta_k}{\theta_{n+1}} - \frac{1}{\theta_i}\delta_{ij}\delta_{jk}\\
	&\phantom{\frac{1}{2}} -\frac{1}{\theta_i}\delta_{ij}\delta_{jk} + \frac{1}{\theta_i}\delta_{ij}\delta_{ik}\\
	&\phantom{\frac{1}{2}} +\frac{\theta_k}{\theta_i}\delta_{ji} + \frac{\theta_k}{\theta_i}\delta_{ij}-\frac{\theta_k}{\theta_i}\delta_{ij}\\
	&= \frac{\theta_k}{\theta_{n+1}} - \frac{1}{\theta_i}\delta_{ij}\delta_{jk} + \frac{\theta_k}{\theta_i}\delta_{ij}.
\end{align*}
The geodesic ODE system is given by
\begin{equation*}
	2 \frac{d^2\theta_k}{dt^2} + \sum_{i=1}^{n}\sum_{j=1}^{n} 2\Gamma_{ij}^k \frac{d\theta_i}{dt} \frac{d\theta_j}{dt} = 0;\quad k=1,\ldots,n.
\end{equation*}
For fixed $k$, let us compute the quadratic form induced by the matrix $(2\Gamma_{ij}^k)_{i,j=1,\ldots,n}$ on a  vector $u=(u_i)_{i=1,\ldots,n}$ for fixed $k\in\{1,\ldots,n\}$,
\begin{align*}
	Q(u) &= 2 \sum_{i=1}^{n}\sum_{j=1}^{n} \Gamma_{ij}^k u_i u_j\\
	&= \sum_{i=1}^{n}\sum_{j=1}^{n} \Bigg(\frac{\theta_k}{\theta_{n+1}}-\frac{1}{\theta_i}\delta_{ij}\delta_{jk}+\frac{\theta_k}{\theta_i}\delta_{ij}\Bigg)u_i u_j\\
	&= \frac{\theta_k}{\theta_{n+1}}\Bigl(\sum_{i=1}^{n} u_i\Bigr)^2 - \sum_{i=1}^{n}\frac{u_i}{\theta_i}\sum_{j=1}^{n} u_j\delta_{ij}\delta_{jk} + \\
	&\phantom{\frac{\theta_k}{\theta_{n+1}}} + \theta_k\sum_{j=1}^{n} u_j \left(\sum_{i=1}^{n}\frac{u_i}{\theta_i}\delta_{ij}\right)\\
	&= \frac{\theta_k}{\theta_{n+1}}\Bigl(\sum_{i=1}^{n} u_i\Bigr)^2 - \frac{u_k^2}{\theta_k} + \theta_k \sum_{j=1}^{n}\frac{u_j^2}{\theta_j}.
\end{align*}
So that, taking $u=(\frac{d\theta_i}{dt})_{i=1,\ldots,n}$, we get
\begin{equation*}
	2\frac{d\theta_k^2}{dt^2} + \frac{\theta_k}{\theta_{n+1}}\Bigl(\sum_{i=1}^{n}\frac{d\theta_i}{dt} \Bigr)^2 
	- \frac{1}{\theta_k}\Bigl(\frac{d\theta_k}{dt}\Bigr)^2 + \theta_k\sum_{j=1}^{n}\frac{1}{\theta_j}\Bigl(\frac{d\theta_j}{dt}\Bigr)^2 = 0,
\end{equation*}
for $k=1,\ldots,n$.	
\end{proof}
Taking into account that a geodesic is a unit speed curve with respect to $J(\theta)$ that satisfies the ODE system in \textit{(2)} of Proposition~\ref{propo:geodesicsystem}, we are in position to stablish the main result of this section.
\begin{theorem}\label{thm:SecondOrderDecoupledSystem}
	Let $U$ and $J$ as before. Then a $J$-geodesic in $U$ is a solution of the following second order decoupled system
	\begin{equation*}
		2\theta_k\frac{d^2\theta_k}{dt^2}+\theta_k^2-\left(\frac{d\theta_k}{dt}\right)^2 = 0;\quad k=1,2,\ldots,n.
	\end{equation*}
\end{theorem}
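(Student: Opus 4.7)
The plan is to combine the coupled geodesic system in part $(2)$ of Proposition~\ref{propo:geodesicsystem} with the unit speed hypothesis coming from Theorem~\ref{thm:arcLengthFisherInformation}; the miracle is that these two nonlocal quadratic terms in the geodesic equation bundle together into a single $\theta_k$ summand, decoupling the system.

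More concretely, I would first rewrite the unit speed condition $\proin{\dot{\boldsymbol{\theta}}}{\dot{\boldsymbol{\theta}}}_{J(\boldsymbol{\theta})}=1$ in the form that appeared in the proof of Theorem~\ref{thm:arcLengthFisherInformation}, namely
\begin{equation*}
\frac{1}{\theta_{n+1}(t)}\Bigl(\sum_{i=1}^{n}\dot{\theta}_i(t)\Bigr)^2+\sum_{j=1}^{n}\frac{\dot{\theta}_j^2(t)}{\theta_j(t)}=1,
\end{equation*}
using also that $\dot{\theta}_{n+1}=-\sum_{i=1}^{n}\dot{\theta}_i$ to identify the first summand with $\dot{\theta}_{n+1}^2/\theta_{n+1}$ if desired. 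Then I would stare at the geodesic ODE in part $(2)$ of Proposition~\ref{propo:geodesicsystem} and observe that, when written as
\begin{equation*}
2\frac{d^2\theta_k}{dt^2}+\theta_k\Biggl[\frac{1}{\theta_{n+1}}\Bigl(\sum_{i=1}^{n}\dot{\theta}_i\Bigr)^2+\sum_{j=1}^{n}\frac{\dot{\theta}_j^2}{\theta_j}\Biggr]-\frac{1}{\theta_k}\dot{\theta}_k^2=0,
\end{equation*}
the bracket is precisely the left hand side of the unit speed identity. Replacing it by $1$ yields
\begin{equation*}
2\frac{d^2\theta_k}{dt^2}+\theta_k-\frac{\dot{\theta}_k^2}{\theta_k}=0,
\end{equation*}
and finally multiplying through by $\theta_k$ gives the decoupled system in the statement.

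The only thing one has to justify carefully is that a geodesic may be taken with unit $J$-speed: this is standard and follows from reparametrizing by arc length, together with the fact that geodesics preserve speed (which is itself a consequence of the general geodesic equations), and I would invoke it as such, citing do~Carmo as already done above. Beyond that, the argument is purely algebraic substitution, so there is no real obstacle — the content of the theorem is the observation that the two quadratic nonlocal terms in the Christoffel expansion collapse, modulo unit speed, into the affine term $\theta_k$.
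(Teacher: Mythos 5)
Your proof is correct and follows essentially the same route as the paper: substitute the unit-speed identity from Theorem~\ref{thm:arcLengthFisherInformation} into the geodesic ODE of Proposition~\ref{propo:geodesicsystem}\textit{(2)} after factoring out $\theta_k$ from the two nonlocal quadratic terms, then multiply by $\theta_k$. The only cosmetic difference is your added remark on justifying the unit-speed parametrization, which the paper sidesteps by building the unit-speed condition into its definition of a geodesic.
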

\begin{proof}
From Theorem~\ref{thm:arcLengthFisherInformation} we see that the unit speed condition $\proin{\dot{\boldsymbol{\theta}}(t)}{\dot{\boldsymbol{\theta}}(t)}_{J}=1$ is equivalent to
\begin{align*}
	1 &= \sum_{j=1}^{n+1}\frac{1}{\theta_j}\Bigl(\frac{d\theta_j}{dt}\Bigr)^2\\
	&= \frac{1}{\theta_{n+1}}\Bigl(\frac{d\theta_{n+1}}{dt}\Bigr)^2+ \sum_{j=1}^{n}\frac{1}{\theta_j}\Bigl(\frac{d\theta_j}{dt}\Bigr)^2\\
	&= \frac{1}{\theta_{n+1}}\Bigl(\frac{d}{dt}\bigl(1-\sum_{j=1}^n\theta_j\bigr)\Bigr)^2+ \sum_{j=1}^{n}\frac{1}{\theta_j}\Bigl(\frac{d\theta_j}{dt}\Bigr)^2\\
	&= \frac{1}{\theta_{n+1}}\left(\sum_{j=1}^{n}\frac{d\theta_j}{dt}\right)^2 + \sum_{j=1}^{n}\frac{1}{\theta_j}\Bigl(\frac{d\theta_j}{dt}\Bigr)^2.
\end{align*}
So that, by simple inspection of the ODE in \textit{(2)} of Proposition~\ref{propo:geodesicsystem} we get 
\begin{equation*}
2\frac{d^2\theta_k}{dt^2}+\theta_k-\frac{1}{\theta_k}\left(\frac{d\theta_k}{dt}\right)^2 = 0 .
\end{equation*}
for every $k=1,\ldots,n$. In $U$ this system is equivalent to $2\theta_k\frac{d^2\theta_k}{dt^2}+\theta_k^2-\left(\frac{d\theta_k}{dt}\right)^2 = 0$. In simplified dot notation for derivatives, the geodesic system can be written as $2\theta_k\ddot{\theta}_k+\theta_k^2-(\dot{\theta}_k)^2=0$.
\end{proof}

\section{Solutions of the ODE that preserve the unit speed geodesic condition} \label{sec:SolutionsOfTheODEthatPreserveTheUnitSpeedGeodesicCondition}
In this section we aim to give sufficient conditions in the initial position and velocity $\theta_k(0)=\theta_k^0$ and $\dot{\theta}_k(0)=v_k^0$; $k=1,\ldots,n$, in order to obtain explicit solutions for the problem
\begin{equation*}
	\begin{cases}
		2\theta_k\ddot{\theta}_k+\theta_k^2-(\dot{\theta}_k)^2=0, \quad k=1,\ldots,n;\\
		\theta_k(0)=\theta_k^0;\\
		\dot{\theta}_k(0)=v_k^0;
	\end{cases}
\end{equation*}
with $\theta_k(t)$ satisfying the conservation formula
\begin{equation*}
	\sum_{k=1}^{n+1}\frac{(\dot{\theta}(t))^2}{\theta_k(t)} \equiv 1,\quad t>0,
\end{equation*}
for the unit speed of the solution with respect to $\proin{}{}_J$. We shall actually work in a much more general setting including discrete and continuous cases at once.
In the following result we obtain explicit formulas for the solutions of the basic ODE.
\begin{lemma}\label{lemma:IVP}
	Let $y_0>0$ and $z_0\in\mathbb{R}$ be given. Set  $\alpha=\frac{y_0^2+z_0^2}{y_0}=y_0+\frac{z_0^2}{y_0}$ and $\beta=\tan^{-1}\frac{z_0}{y_0}$. Then, the function of $t\in\mathbb{R}$ given by
	\begin{align*}
		y(t) &= \alpha \cos^2\Bigl(\frac{t}{2}-\beta\Bigr)\\
		&= y_0\cos^2\Bigl(\frac{t}{2}\Bigr) + \frac{z_0^2}{y_0}\sin^2\Bigl(\frac{t}{2}\Bigr) + z_0\sin t,
	\end{align*}
is the unique solution of the initial value problem
\begin{equation*} 
	(P)
	\begin{cases}
		2 y \ddot{y} + y^2 - (\dot{y})^2 = 0;\\
		y(0) = y_0;\\
		\dot{y}(0) = z_0.
	\end{cases}
\end{equation*}
\end{lemma}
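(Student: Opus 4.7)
The key observation is that the nonlinear ODE $2y\ddot{y} + y^2 - (\dot{y})^2 = 0$ is linearized by the substitution $u(t) = \sqrt{y(t)}$, which is valid in a neighbourhood of $0$ because $y_0 > 0$. From $y = u^2$ one computes $\dot{y} = 2u\dot{u}$ and $\ddot{y} = 2\dot{u}^2 + 2u\ddot{u}$, and a direct substitution collapses the nonlinear combination to
$$2y\ddot{y} + y^2 - \dot{y}^2 \;=\; 4u^3\ddot{u} + u^4 \;=\; u^3\bigl(4\ddot{u} + u\bigr).$$
Hence, on any interval where $u > 0$, the original equation is equivalent to the harmonic oscillator $\ddot{u} + \tfrac{1}{4}u = 0$, which is completely elementary.

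The general solution of $\ddot{u} + u/4 = 0$ is $u(t) = A\cos(t/2) + B\sin(t/2)$. The initial data transfer via $u(0) = \sqrt{y_0}$ and $\dot{u}(0) = z_0/(2\sqrt{y_0})$ to $A = \sqrt{y_0}$, $B = z_0/\sqrt{y_0}$. Writing $A = \sqrt{\alpha}\cos\beta$ and $B = \sqrt{\alpha}\sin\beta$ with $\alpha = A^2+B^2 = y_0 + z_0^2/y_0$ and $\tan\beta = B/A = z_0/y_0$ puts $u$ in amplitude-phase form $u(t) = \sqrt{\alpha}\,\cos(t/2-\beta)$. Squaring gives the first claimed expression $y(t) = \alpha\cos^2(t/2-\beta)$; expanding $\cos(t/2-\beta)$ using the addition formula, squaring, and substituting $\sqrt{\alpha}\cos\beta = \sqrt{y_0}$, $\sqrt{\alpha}\sin\beta = z_0/\sqrt{y_0}$ recovers the second form $y_0\cos^2(t/2) + (z_0^2/y_0)\sin^2(t/2) + z_0\sin t$.

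For uniqueness, write the ODE in normal form as $\ddot{y} = (\dot{y}^2 - y^2)/(2y)$; the right-hand side is $C^\infty$ on the open half-space $\{y>0\}$, so Picard--Lindelöf yields local uniqueness, and the explicit formula supplies the global continuation. The mildly delicate point is the behaviour at the isolated zeros of $y$, where $y$ and $\dot{y}$ vanish simultaneously and the normal form breaks down; however the closed-form $y$ is smooth on all of $\mathbb{R}$ and satisfies the polynomial identity $2y\ddot{y} + y^2 - \dot{y}^2 = 0$ pointwise everywhere by the direct computation above, so it is the natural (and only smooth non-negative) extension selected by the limit from $\{y>0\}$. I expect the main obstacle to be only a bookkeeping one, namely keeping track of the branches of the square root and the arctangent; the amplitude-phase reformulation in paragraph two is precisely what cleans this up.
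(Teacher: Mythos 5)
Your argument is correct, and it takes a genuinely different route from the paper. The paper's proof is a pure verification: it first establishes the trigonometric identity between the two displayed forms of $y(t)$, then checks the initial conditions, and finally computes $\dot y$ and $\ddot y$ from the form $\alpha\cos^2(t/2-\beta)$ and substitutes into $2y\ddot y+y^2-\dot y^2$ to see that it vanishes; uniqueness is dispatched, exactly as you do, by the remark that $y_0>0$ places the initial data in the region where the classical (Picard--Lindel\"of) theory applies to the associated first-order system. You instead \emph{derive} the solution via the substitution $u=\sqrt{y}$, under which $2y\ddot y+y^2-\dot y^2=u^3(4\ddot u+u)$, reducing the problem on $\{y>0\}$ to the harmonic oscillator $\ddot u+\tfrac14 u=0$; the amplitude--phase form of $u$ then produces $\alpha$ and $\beta$ with the correct branch of $\tan^{-1}$ automatically (since $A=\sqrt{y_0}>0$ forces $\beta\in(-\pi/2,\pi/2)$). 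Your computations of $A$, $B$, $\alpha$, $\tan\beta$ and the re-expansion into $y_0\cos^2(t/2)+(z_0^2/y_0)\sin^2(t/2)+z_0\sin t$ all check out. What your approach buys is an explanation of \emph{why} the solution has the $\cos^2$ form (the square root of the density evolves as a free oscillator of frequency $1/2$, which is the familiar statement that Fisher--Rao geodesics become great circles under $f\mapsto\sqrt{f}$), and it gives existence and uniqueness on any interval where $y>0$ for free from the linear theory, rather than requiring a separate verification. What the paper's verification buys is brevity and the explicit second form of $y(t)$, which is used later in Theorem 4.2. One caveat: your parenthetical claim that the closed form is the ``only smooth non-negative extension'' past the zeros of $y$ is not justified as stated (at a zero both $y$ and $\dot y$ vanish and the normal form degenerates, so uniqueness there requires a separate argument, e.g.\ matching $\ddot y$); but the paper makes no stronger claim than local uniqueness near $t=0$ either, so this does not put you behind the paper's own proof.
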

\begin{proof}
Notice first that since $y_0$ is positive, the first order associated system satisfies the classical uniqueness results in some neighbourhood of $(y_0,z_0)$. Let us start by proving the identity
\begin{equation*}
	\alpha\cos^2\left(\frac{t}{2}-\beta\right) = y_0\cos^2\Bigl(\frac{t}{2}\Bigr)+\frac{z_0^2}{y_0}\sin^2\Bigl(\frac{t}{2}\Bigr) + z_0\sin t. 
\end{equation*}
Since $\beta=\tan^{-1}\frac{z_0}{y_0}$ it is clear that $\cos^2\beta=\frac{y_0^2}{y_0^2+z_0^2}$ and $\sin^2\beta=\frac{z_0^2}{y_0^2+z_0^2}$. Then
\begin{align*}
	\alpha\cos^2\left(\frac{t}{2}-\beta\right) &= \alpha\left(\cos\frac{t}{2}\cos\beta +\sin\frac{t}{2}\sin\beta\right)^2\\
	&= \alpha\cos^2\beta\left(\cos\frac{t}{2}+\frac{\sin\beta}{\cos\beta}\sin\frac{t}{2}\right)^2\\
	&= \frac{y_0^2+z_0^2}{y_0}\frac{y_0^2}{y_0^2+z_0^2}\left(\cos^2\frac{t}{2}+\Bigl(\frac{z_0}{y_0}\Bigr)^2\sin^2\frac{t}{2} + \frac{z_0}{y_0}\sin t\right)\\
	&= y_0\cos^2\frac{t}{2} + \frac{z_0^2}{y_0}\sin^2\frac{t}{2} + z_0\sin t.
\end{align*}
Let us show that $y(t)$ solves \textit{(P)}. Notice first that $y(0)=\alpha \cos^2\beta=\frac{y_0^2+z_0^2}{y_0}\frac{y_0^2}{y_0^2+z_0^2}=y_0$. From the second formula for $y(t)$ we see that $\dot{y}(t)=-y_0\sin \frac{t}{2}\cos\frac{t}{2}+\frac{z_0^2}{y_0}\cos\frac{t}{2}\sin \frac{t}{2}+z_0\cos t$. Hence $\dot{y}(0)=z_0$. Let us finally check that $y(t)$ satisfies the ODE. From the first expression for $y(t)$ we see that $\dot{y}(t)=-\alpha\cos\bigl(\tfrac{t}{2}-\beta\bigr)\sin\bigl(\tfrac{t}{2}-\beta\bigr)$. Let us compute $\ddot{y}$,
\begin{align*}
	\ddot{y}(t) &= -\frac{\alpha}{2}\left[-\sin^2\Bigl(\frac{t}{2}-\beta\Bigr)+\cos^2\Bigl(\frac{t}{2}-\beta\Bigr)\right]\\
	&= \frac{\alpha}{2}\left[2\sin^2\Bigl(\frac{t}{2}-\beta\Bigr)-1\right]\\
	&= \alpha\left[\sin^2\Bigl(\frac{t}{2}-\beta\Bigr)-\frac{1}{2}\right]. 
\end{align*}
Then
\begin{align*}
		2 y \ddot{y} + y^2 - (\dot{y})^2 &= 2\alpha^2\cos^2\Bigl(\frac{t}{2}-\beta\Bigr)\left[\sin^2\Bigl(\frac{t}{2}-\beta\Bigr)-\frac{1}{2}\right] \\
		&\phantom{=\,\,}+ \alpha^2\cos^4\Bigl(\frac{t}{2}-\beta\Bigr)-\alpha^2\cos^2\Bigl(\frac{t}{2}-\beta\Bigr)\sin^2\Bigl(\frac{t}{2}-\beta\Bigr)\\
		&= \alpha^2\cos^2\Bigl(\frac{t}{2}-\beta\Bigr)\left[2\sin^2\Bigl(\frac{t}{2}-\beta\Bigr) - 1 + \cos^2\Bigl(\frac{t}{2}-\beta\Bigr) - \sin^2\Bigl(\frac{t}{2}-\beta\Bigr)\right]\\
		&= 0,
\end{align*}
for every $t$.
\end{proof}

The next statement, which contains the main result of this section, is proved in a general measure space that contains both the discrete and continuous cases.
\begin{theorem}\label{thm:MainResultUnitSpeedCondition}
	Let $(X,\mu)$ be a positive $\sigma$-finite measure space. Let $f_0:X\to\mathbb{R}^+$ be a positive probability density with respect to $\mu$, i.e., $\int f_0(x)d\mu(x)=1$ and $f_0(x)>0$ for every $x\in X$.  Let $g_0:X\to\mathbb{R}$ be an integrable function, i.e., $\int_X\abs{g_0}d\mu<\infty$ such that
	\begin{enumerate}[(a)]
		\item $\int_X g_0(x)d\mu(x)=0$;
		\item $\int_X\frac{g_0^2(x)}{f_0(x)}d\mu(x)=1$, i.e. $\frac{g_0^2}{f_0}$ is a probability density with respect to $\mu$.
	\end{enumerate}
Then, the function $f:X\times\mathbb{R}\to\mathbb{R}$ given by
\begin{equation*}
	f(x,t) = \alpha(x)\cos^2\Bigl(\frac{t}{2}-\beta(x)\Bigr),
\end{equation*}
where $\alpha(x)=\frac{f_0^2(x)+g_0^2(x)}{f_0(x)}$ and $\beta(x)=\tan^{-1}\frac{g_0(x)}{f_0(x)}$ satisfies the following properties;
\begin{enumerate}[(i)]
	\item for every $x\in X$ the function of $t$ given by $f_x(t)=f(x,t)$ solves \textit{(P)} with $y_0=f_0(x)$ and $z_0=g_0(x)$ for each $x\in X$;
	\item $\int_X f(x,t)d\mu(x)=1$ for every $t$, i.e., $f(\cdot,t)$ is a probability density for every $t\in\mathbb{R}$;
	\item $\int_X\frac{(\dot{f}(x,t))^2}{f(x,t)}d\mu(x)=1$ for every $t$, i.e. $\frac{(\dot{f}(\cdot,t))^2}{f(\cdot,t)}$ is a probability density for every $t\in\mathbb{R}$.
\end{enumerate}
\end{theorem}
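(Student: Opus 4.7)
The plan is to handle the three claims in order, leveraging the explicit closed-form expression for $f(x,t)$ supplied by Lemma~\ref{lemma:IVP}.

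Claim \textit{(i)} is essentially a quotation. Fix $x\in X$ and set $y_0=f_0(x)>0$, $z_0=g_0(x)\in\mathbb{R}$. With these values the quantities $\alpha(x)$ and $\beta(x)$ defined in the statement coincide exactly with the scalars $\alpha$ and $\beta$ appearing in Lemma~\ref{lemma:IVP}. Hence $t\mapsto f(x,t)$ is precisely the solution of $(P)$ that the lemma produces, and no further work is needed.

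For \textit{(ii)} I would integrate the second representation
\[
f(x,t)=f_0(x)\cos^2(t/2)+\frac{g_0^2(x)}{f_0(x)}\sin^2(t/2)+g_0(x)\sin t
\]
against $d\mu$ term by term. The $t$-dependent coefficients pull outside, and the three hypotheses $\int f_0\,d\mu=1$, $\int g_0^2/f_0\,d\mu=1$ and $\int g_0\,d\mu=0$ collapse the answer to $\cos^2(t/2)+\sin^2(t/2)=1$ for every $t$. Uniform integrability is immediate from the pointwise bound $\abs{f(\cdot,t)}\le\alpha=f_0+g_0^2/f_0\in L^1(\mu)$.

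For \textit{(iii)}, differentiating the first representation gives $\dot f(x,t)=-\alpha(x)\sin(t/2-\beta(x))\cos(t/2-\beta(x))$, so the ratio simplifies to $\dot f^2/f=\alpha\sin^2(t/2-\beta)$. There are then two equally short completions. The slicker one is to note that $\alpha\sin^2(t/2-\beta)=\alpha\cos^2\bigl((t-\pi)/2-\beta\bigr)=f(\cdot,t-\pi)$, so \textit{(iii)} is nothing but \textit{(ii)} evaluated at the shifted time $t-\pi$. The direct alternative is to expand $\sin^2(t/2-\beta)$ using $\cos^2\beta=f_0^2/(f_0^2+g_0^2)$, $\sin^2\beta=g_0^2/(f_0^2+g_0^2)$ and $\sin\beta\cos\beta=f_0g_0/(f_0^2+g_0^2)$, rewrite
\[
\dot f^2/f=f_0\sin^2(t/2)+\frac{g_0^2}{f_0}\cos^2(t/2)-g_0\sin t,
\]
and integrate exactly as in \textit{(ii)}. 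A third route, useful if one prefers to avoid any trigonometric manipulation, is to rewrite the ODE from \textit{(i)} as $\dot f^2/f=f+2\ddot f$ and interchange $d^2/dt^2$ with $\int\cdot\,d\mu$ (justified by the uniform bound of $\abs{\ddot f}$ by a multiple of $\alpha\in L^1(\mu)$), so that \textit{(iii)} reduces to $\int f\,d\mu+2\tfrac{d^2}{dt^2}\int f\,d\mu=1+0=1$ by \textit{(ii)}.

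There is no genuine obstacle: once Lemma~\ref{lemma:IVP} is available, the whole argument is trigonometric bookkeeping joined with the two normalisation hypotheses. The only mild care needed concerns signs in the formulas for $\cos\beta$ and $\sin\beta$, but since the principal branch of $\arctan$ takes values in $(-\pi/2,\pi/2)$ one has $\cos\beta>0$ everywhere and $\sin\beta$ with the sign of $g_0$, so the product identity $\sin\beta\cos\beta=f_0 g_0/(f_0^2+g_0^2)$ holds globally on $X$.
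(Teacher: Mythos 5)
Your proof is correct, and for parts \textit{(i)} and \textit{(ii)} it is exactly the paper's argument: quote Lemma~\ref{lemma:IVP} pointwise in $x$, then integrate the representation $f=f_0\cos^2(t/2)+\tfrac{g_0^2}{f_0}\sin^2(t/2)+g_0\sin t$ term by term against $d\mu$ and invoke the three normalisations. The only divergence is in \textit{(iii)}. After reaching $\dot f^2/f=\alpha\sin^2(t/2-\beta)$ the paper adds $f$ and $\dot f^2/f$ pointwise to obtain $\alpha$, computes $\int_X\alpha\,d\mu=\int_X\bigl(f_0+\tfrac{g_0^2}{f_0}\bigr)d\mu=2$, and subtracts the result of \textit{(ii)}; your first route instead observes that $\alpha\sin^2(t/2-\beta)=f(\cdot,t-\pi)$, so \textit{(iii)} is \textit{(ii)} at a shifted time. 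Both are one-line finishes exploiting the same Pythagorean structure, and your second route (direct expansion of $\sin^2(t/2-\beta)$, which is correct given your sign remark that $\cos\beta>0$ on the principal branch) is the honest computation underlying both. Your third route, via $\dot f^2/f=f+2\ddot f$, is also fine but is the only one that requires an extra analytic justification (differentiation under the integral sign, dominated by $\alpha\in L^1(\mu)$); the paper's choice and your first two routes avoid any such step. No gaps.
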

\begin{proof}
Property \textit{(i)} follows from Lemma~\ref{lemma:IVP}. Let us check \textit{(ii)}. From the formula $f(x,t)=f_0(x)\cos^2\tfrac{t}{2}+\tfrac{(g_0(x))^2}{f_0(x)}\sin^2\tfrac{t}{2}+g_0(x)\sin t$, \textit{(a)}, \textit{(b)} and the fact the $f_0$ is a probability density, we obtain
\begin{align*}
\int_X f(x,t)d\mu(x) &= \left(\int_Xf_0d\mu\right)\cos^2\frac{t}{2}+\left(\int_X\frac{g_0^2}{f_0}d\mu\right)\sin^2\frac{t}{2}+\left(\int_Xg_0d\mu\right)\sin t\\
&= \cos^2\left(\frac{t}{2}\right)+\sin^2\left(\frac{t}{2}\right)\\
&= 1,
\end{align*}
for every $t$. Let us finally check \textit{(iii)}. Notice that
\begin{equation*}
	\frac{(\dot{f}(x,t))^2}{f(x,t)} = \frac{\left(\alpha(x)\cos\bigl(\tfrac{t}{2}-\beta(x)\bigr)\sin\bigl(\tfrac{t}{2}-\beta(x)\bigr)\right)^2}{\alpha(x)\cos^2\bigl(\tfrac{t}{2}-\beta(x)\bigr)} = \alpha(x)\sin^2\bigl(\tfrac{t}{2}-\beta(x)\bigr).
\end{equation*}
Now. from \textit{(ii)}
\begin{align*}
	1+ \int_X \frac{(\dot{f}(x,t))^2}{f(x,t)} d\mu(x) &= \int_X \left(f(x,t)+\frac{(\dot{f}(x,t))^2}{f(x,t)} \right)d\mu(x) \\
	&= \int_X\left(\alpha(x)\cos^2\bigl(\tfrac{t}{2}-\beta(x)\bigr)+\alpha(x)\sin^2\bigl(\tfrac{t}{2}-\beta(x)\bigr)\right)d\mu(x)\\
	&= \int_X\alpha(x)d\mu(x)\\
	&= \int_X\left(f_0(x) + \frac{(g_0(x))^2}{f_0(x)}\right)d\mu(x)\\
	&= 2;
\end{align*}
so that $\int_X \frac{(\dot{f}(x,t))^2}{f(x,t)} d\mu(x)=1$, for every $t\in\mathbb{R}$.
\end{proof}

Let us observe at this point that in the setting of Section~\ref{sec:TheRiemannianGeometryInTheSimplex} we are dealing with a geometric structure in $U=\{\theta\in\mathbb{R}^n:\theta_i>0, i=1,\ldots,n+1\}$ which can be identified with the set of all probability densities $f$ in the space $(X,\mu)$ with $X=\{1,2,\ldots,n+1\}$ and $\mu$ the counting measure. In fact, $\theta\to f_\theta$, with $f_\theta(i)=\theta_i$ is a density, since $\int_X f_\theta d\mu=\sum_{i=1}^{n+1}\theta_i=1$. In this sense, the Fisher-Riemann geometry in $U$ translates into a geometry in the set of positive densities in $(X,\mu)$. A geodesic curve of densities will be of the form $f(i,t)=\theta_i(t)$, $i\in X$, with $2\theta_i\ddot{\theta}_i+\theta_i^2-(\dot{\theta}_i)^2=0$. More explicitly Theorem~\ref{thm:MainResultUnitSpeedCondition} gives the analytical form of $f(i,t)$ for $i=1,\ldots,n+1$,
\begin{equation*}
	f(i,t) = \frac{f_0^2(i)+g_0^2(i)}{f_0(i)}\cos^2\Bigl(\frac{t}{2}-\tan^{-1}\frac{g_0(i)}{f_0(i)}\Bigr),
\end{equation*}
when $\sum_{i=1}^{n+1}f_0(i)=1$;  $\sum_{i=1}^{n+1}g_0(i)=0$ and $\sum_{i=1}^{n}\frac{g_0^2(i)}{f_0(i)}=1$.

On the other hand, Theorem~\ref{thm:MainResultUnitSpeedCondition} contains also continuous cases of non vanishing densities in some subset $\Omega$ of $\mathbb{R}^n$. In fact,
\begin{equation*}
		f(x,t) = \frac{f_0^2(x)+g_0^2(x)}{f_0(x)}\cos^2\Bigl(\frac{t}{2}-\tan^{-1}\frac{g_0(x)}{f_0(x)}\Bigr)
\end{equation*}
is a Fisher-Riemann geodesic trajectory in the set of probability densities in $\Omega$ provided that $f_0$ and $\tfrac{g_0^2}{f_0}$ are densities in $\Omega$ and $\int_\Omega g_0 dx=0$.

\section{Fisher geodesic transport of $m$-dimensional densities and the convergence of their dyadic pixelations}\label{sec:FisherGeodesicTransportOfnDimensionalDensitiesAndOfTheirDyadicPixelations}

The generality of the basic measure space $(X,\mu)$ in Theorem~\ref{thm:MainResultUnitSpeedCondition} allows its application to the approximation of the geodesic curves corresponding to continuous (parametric) densities by geodesic curves corresponding to discrete (finite dimensional) settings.

Let $Q=[0,1)^m$ be the unit cube in $\mathbb{R}^m$. Let us consider the nested dyadic partitions of $Q$ that we proceed to describe.
For an integer $j\geq 0$  and $\mathbf{k}\in \mathcal{K}(j)\doteq\{(k_1, \ldots, k_m)\in 
\mathbb{Z}^m:k_i=0,1, \ldots, 2^j-1; i=1, \ldots, m\}$, set 
\begin{equation*}
Q_\mathbf{k}^j=\prod_{i=1}^m [k_i2^{-j}, (k_i+1)2^{-j}),\ \ 
\mathcal{D}^j=\{Q_\mathbf{k}^j:\mathbf{k}\in \mathcal{K}(j)\}\ \   \text{ and } \ \
\mathcal{D}=\bigcup_{j\geq0}\mathcal{D}^j.	
\end{equation*}
Notice that $Q_0^0=Q$ is the unique element of $\mathcal{D}^0$. Observe also that $Q_\mathbf{k}^j\cap Q_\mathbf{k'}^j=\emptyset$ for $\mathbf{k}\neq \mathbf{k'}$, $Q=\bigcup_{\mathbf{k}\in \mathcal{K}(j)}Q_\mathbf{k}^j$,  and each $Q_\mathbf{k}^j$ is the disjoint union of $2^m$ cubes $Q_\mathbf{k'}^{j+1}\in \mathcal{D}^{j+1}$.

Let us consider now a sequence of discrete (finite) probability spaces $(X_j, \mu_j)$, $j\geq 0$, that converges weakly to $(Q, dx)$. For $j\geq 0$, set $X_j=\{\mathbf{k}2^{-j}:\mathbf{k}\in\mathcal{K}(j)\}$ and $\mu_j$ is $2^{ -mj}$ times  the counting measure in $X_j$. Observe that each $x_\mathbf{k}=\mathbf{k}2^{-j}$ is the lower left corner of $Q_{\mathbf{k}}^j$. Given a positive density $f_0:Q\to \mathbb{R}^{+}$, $\int_Q f_0(x) dx=1$, for each $j\geq 0$ define $f^j_0:X_j\to \mathbb{R}^{+}$ by 
\begin{equation}\label{def:fj}
	f_0^j(x_{\mathbf{k}})\doteq\frac{1}{|Q_{\mathbf{k}}^j|}\int_{Q_{\mathbf{k}}^j} f_0(y) dy=\fint_{Q_{\mathbf{k}}^j} f_0 dy,
\end{equation}
where we use the notation $\fint_E \psi$ for the mean value of $\psi$ on $E$.
Note that $|Q_{\mathbf{k}}^j|=2^{-mj}$ for all $\mathbf{k}\in \mathcal{K}(j)$. Then 
\begin{align*}
	\int_{X_j}f_0^j d\mu_j &= 2^{-mj} \sum_{\mathbf{k}\in\mathcal{K}(j)} \frac{1}{|Q_{\mathbf{k}}^j|}\int_{Q_{\mathbf{k}}^j} f_0(y) dy\\
	&= \sum_{\mathbf{k}\in\mathcal{K}(j)}\int_{Q_{\mathbf{k}}^j} f_0(y) dy\\
	&= \int_Q f_0(y) dy \\
	&=1,
\end{align*}
and each $f_0^j$ is a probability density in the space $(X_j, \mu_j)$.

Notice also that if $g_0:Q \to \mathbb{R}$ is integrable, $\int_Q g_0(x) dx=0$ and we define $g^j_0:X_j\to \mathbb{R}$ as we did with $f^j_0$, we also have that $\int_{X_j} g_0^j\, d \mu_j=0$, for every $j\geq 0$, since
\[\int_{X_j} g_0^j\, d\mu_j= \int_Q g_0\, dy.\]
Of course, the ``unit speed'' initial condition in the $j$-th level of approximation is not guaranteed by the ``unit speed'' continuous initial condition $\int_Q \frac{g_0(x)^2}{f_0(x)} dx=1$. The next result contains a simple situation in which the finite dimensional geodesics converge to the corresponding geodesics in a non parametric family of continuous densities.

\begin{theorem}
	Let $f_0$ be a positive measurable function in the cube $Q$ such that $\int_Q f_0\, dx =1$ and $f_0 \geq \delta$ almost everywhere for some positive $\delta$. Let $g_0$ be an integrable real function defined in the cube $Q$ that satisfies
	\begin{enumerate}[a)]
		\item $\int_Q g_0\, dx=0$ and
		\item $\int_Q \frac{g_0^2}{f_0}\, dx=1.$
	\end{enumerate}
	For each $j\geq 0$ let us consider the real functions $f_0^j$ and $g_0^j$ with domain in $X_j$ defined as in \eqref{def:fj}. For $j$ large, set $\widetilde{g}_0^j:X_j\to\mathbb{R}$ given by $\widetilde{g}_0^j=\alpha_j^{-\frac{1}{2}}g_0^j$, where $\alpha_j=\int_{X_j} \frac{(g_0^j)^2}{f_0^j} d\mu_j>0$. Then
	\begin{enumerate}[i)]
		\item $\int_{X_j} f_0^j d\mu_j=1$;
		\item $\int_{X_j} \widetilde{g}_0^j d\mu_j=0$;
		\item $\int_{X_j} \frac{(\widetilde{g}_0^j)^2}{f_0^j} d\mu_j=1$;
		\item the sequence for Fisher-Riemann discrete density geodesics with initial conditions $f_0^j$ and $\widetilde{g}_0^j$, given by
		\begin{equation*}
			f^j(\cdot, t)= \frac{(f_0^j)^2+(\widetilde{g}_0^j)^2}{f_0^j} \cos^2 \left(\frac{t}{2}-\tan^{-1} \frac{\widetilde{g}_0^j}{f_0^j}\right)
		\end{equation*}	
		for each $t\in \mathbb{R}$, ``converges weakly'', as $j\rightarrow \infty$, to the continuous density geodesic with initial conditions $f_0$  and $g_0$ given by
		\begin{equation*}
			f(x, t)= \frac{f_0(x)^2 + g_0(x)^2}{f_0(x)} \cos^2 \left(\frac{t}{2}-\tan^{-1} \frac{g_0(x)}{f_0(x)}\right),
		\end{equation*}
		where  $x\in Q$. More precisely, for every $t$ and every $\varphi$ compactly supported and continuous in $Q$, we have 
		\begin{equation*}
			\left| \int_{X_j} f^j(y,t) \varphi(y) d\mu_j(y) - \int_Q f(y,t) \varphi(y) dy \right| \to 0 
		\end{equation*}
        for $j\to\infty$.
		\end{enumerate}
\end{theorem}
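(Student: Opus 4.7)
Items (i)--(iii) will follow directly from the construction. (i) was computed in the paragraph preceding the theorem statement; (ii) reduces, by the same cell-wise argument, to $\int_{X_j} g_0^j\, d\mu_j = \int_Q g_0\, dx = 0$ (which is preserved under the scalar normalization by $\alpha_j^{-1/2}$); and (iii) holds by the very definition of $\alpha_j$.

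For (iv), my plan is to identify each discrete function $h:X_j\to\mathbb{R}$ with its piecewise-constant extension to $Q$ (still denoted $h$), equal to $h(x_\mathbf{k})$ on each cell $Q_\mathbf{k}^j$; under this identification $\int_{X_j}h\,d\mu_j=\int_Q h\, dy$. By Lebesgue differentiation (equivalently, $L^1$ martingale convergence on the dyadic filtration $\mathcal{F}_j$ generated by $\mathcal{D}^j$), $f_0^j\to f_0$ and $g_0^j\to g_0$ both a.e.\ and in $L^1(Q)$. The hypothesis $f_0\geq\delta$ transfers to $f_0^j\geq\delta$, hence the ratios $(g_0^j)^2/f_0^j$ are well defined and converge a.e.\ to $g_0^2/f_0$.

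The decisive step is $\alpha_j\to 1$. Cauchy--Schwarz on each cell applied to the factorization $g_0=(g_0/\sqrt{f_0})\sqrt{f_0}$ yields
\[
\frac{(g_0^j(x_\mathbf{k}))^2}{f_0^j(x_\mathbf{k})}=\frac{\bigl(\fint_{Q_\mathbf{k}^j}g_0\,dy\bigr)^2}{\fint_{Q_\mathbf{k}^j}f_0\,dy}\leq \fint_{Q_\mathbf{k}^j}\frac{g_0^2}{f_0}\,dy,
\]
whose weighted sum gives the bound $\alpha_j\leq \int_Q g_0^2/f_0\,dy=1$. The right-hand side of the displayed inequality is precisely $\mathbb{E}[g_0^2/f_0\mid\mathcal{F}_j]$, which converges to $g_0^2/f_0$ in $L^1(Q)$ and is therefore uniformly integrable; since it pointwise dominates $(g_0^j)^2/f_0^j$ on $Q$, the latter is uniformly integrable too, and Vitali's convergence theorem upgrades the a.e.\ convergence to $L^1(Q)$ convergence, giving $\alpha_j\to 1$.

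With $\alpha_j\to 1$ in hand, $\widetilde g_0^j=\alpha_j^{-1/2}g_0^j\to g_0$ and $(\widetilde g_0^j)^2/f_0^j\to g_0^2/f_0$ in $L^1(Q)$ (the second by the same Vitali argument, then multiplying by $\alpha_j^{-1}\to 1$). Testing the expansion
\[
f^j(y,t)=f_0^j(y)\cos^2\tfrac{t}{2}+\frac{(\widetilde g_0^j(y))^2}{f_0^j(y)}\sin^2\tfrac{t}{2}+\widetilde g_0^j(y)\sin t
\]
against the bounded continuous $\varphi$ gives $\int_Q f^j(y,t)\varphi(y)\,dy\to \int_Q f(y,t)\varphi(y)\,dy$ for each fixed $t$. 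To pass from the extended integral back to the discrete quadrature, their difference equals $\sum_\mathbf{k} f^j(x_\mathbf{k},t)\int_{Q_\mathbf{k}^j}[\varphi(x_\mathbf{k})-\varphi(y)]\,dy$, bounded in modulus by $\omega_\varphi(\sqrt m\,2^{-j})\int_{X_j}f^j(\cdot,t)\,d\mu_j=\omega_\varphi(\sqrt m\,2^{-j})$ by the uniform continuity of $\varphi$ and property (ii) of Theorem~\ref{thm:MainResultUnitSpeedCondition}; this vanishes as $j\to\infty$. The main obstacle is obtaining the matching lower limit for $\alpha_j$, since Cauchy--Schwarz only delivers $\alpha_j\leq 1$; the lower estimate is forced by Vitali, and the hypothesis $f_0\geq\delta$ is essential to prevent the ratios from blowing up on small cells where the numerator averages are not controlled.
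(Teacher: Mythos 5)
Your proposal is correct, and its decisive step is handled by a genuinely different (and arguably cleaner) device than the paper's. The overall architecture is the same: items (i)--(iii) are immediate, the geodesic is expanded into its three terms $f_0^j\cos^2\tfrac t2+\frac{(\widetilde g_0^j)^2}{f_0^j}\sin^2\tfrac t2+\widetilde g_0^j\sin t$, everything reduces to $\alpha_j\to1$ plus weak convergence of each term, and the quadrature error is absorbed by the uniform continuity of $\varphi$. The difference lies in how $\alpha_j\to1$ is justified. The paper proves pointwise convergence of $\psi_j=\sum_{\mathbf k}\frac{(g_0^j)^2}{f_0^j}\mathds{1}_{Q_{\mathbf k}^j}$ via dyadic Lebesgue differentiation and then invokes dominated convergence, using $f_0\geq\delta$ and an appeal to $g_0\in L^2(Q)$ to produce a dominating bound. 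You instead apply Cauchy--Schwarz cell by cell to the factorization $g_0=(g_0/\sqrt{f_0})\sqrt{f_0}$, obtaining $\psi_j\leq\mathbb{E}[g_0^2/f_0\mid\mathcal F_j]$; since the conditional expectations of the integrable function $g_0^2/f_0$ converge in $L^1$ and are hence uniformly integrable, Vitali upgrades the a.e.\ convergence of $\psi_j$ to $L^1$ convergence. This buys two things: it yields the one-sided bound $\alpha_j\leq1$ for free, and it requires only $g_0^2/f_0\in L^1$, with no appeal to $g_0\in L^2$ (which the hypotheses do not directly guarantee unless $f_0$ is also bounded above) nor, for this step, to $f_0\geq\delta$. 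Your domination by the martingale $\mathbb{E}[g_0^2/f_0\mid\mathcal F_j]$ is in fact tighter than the paper's stated bound. The remaining steps ($L^1$ convergence of $\widetilde g_0^j$ and of $(\widetilde g_0^j)^2/f_0^j$, testing against bounded $\varphi$, and the quadrature correction bounded by $\omega_\varphi(\sqrt m\,2^{-j})\int_{X_j}f^j(\cdot,t)\,d\mu_j=\omega_\varphi(\sqrt m\,2^{-j})$) are sound and match the paper's estimates in substance.
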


\begin{proof}
	First observe that, since $g_0$ satisfies $b)$, it can not be zero almost everywhere. Hence, for $j$ large enough, $(g_0^j)^2$ is positive in some set of positive measure and then $\alpha_j=\int_{X_j}\frac{(g_0^j)^2}{f_0^j}d\mu_ j>0$.

	Item $i)$ was proved before the statement of the theorem. To prove item $ii)$, notice that $\int_{X_j} \widetilde{g}_0^j\, d\mu_j=\alpha_j^{-1/2} \int_{X_j} g_0^j\, d\mu_j=0$. Item $iii)$ follows from the definition of $\alpha_j$,
	\begin{equation*}
		\int_{X_j} \frac{(\widetilde{g}_0^j)^2}{f_0^j}\, d\mu_j=\frac{1}{\alpha_j} \int_{X_j} \frac{(g_0^j)^2}{f_0^j}\, d\mu_j =1.
	\end{equation*}
	Let us finally prove $iv)$. Recall that, from Theorem~\ref{thm:MainResultUnitSpeedCondition} and Lemma~\ref{lemma:IVP}, we have
	\begin{equation*}
			f(x,t) =f_0(x)\cos^2\left(\frac{t}{2}\right)+\frac{g_0(x)^2}{f_0(x)}\sin^2\left(\frac{t}{2}\right)+g_0(x)\sin(t)
	\end{equation*}
	for each $x\in Q$ and $t\in \mathbb{R}$, 
	and
	\begin{equation*}
		f^j(x_{\mathbf{k}},t) =f^j_0(x_\mathbf{k})\cos^2\left(\frac{t}{2}\right)+\frac{\widetilde{g}^j_0(x_\mathbf{k})^2}{f^j_0(x_\mathbf{k})}\sin^2\left(\frac{t}{2}\right)+\widetilde{g}^j_0(x_\mathbf{k})\sin(t)
	\end{equation*}
	for each $x_{\mathbf{k}}\in X_j=\{\mathbf{k}2^{ -j}:\mathbf{k}\in \mathcal{K}(j)\}$.
	So that, we only have to prove that for every compactly supported and continuous $\varphi$ in $Q$, we have
	
	\begin{equation}\label{tesis_teo_1}
		\lim_{j\rightarrow\infty}   \left| \int_{X_j} f_0^j(y) \varphi(y) d\mu_j(y) - \int_Q f_0(y) \varphi(y) dy \right|=0,
	\end{equation}
	
	\begin{equation}\label{tesis_teo_2}
		\lim_{j\rightarrow\infty}   \left| \int_{X_j} \widetilde{g}_0^j(y) \varphi(y) d\mu_j(y) - \int_Q g_0(y) \varphi(y) dy \right|=0,
	\end{equation}
	
	\begin{equation}\label{tesis_teo_3}
		\lim_{j\rightarrow\infty}   \left| \int_{X_j} \frac{\widetilde{g}_0^j(y)^2}{f_0^j(y)} \varphi(y) d\mu_j(y) - \int_Q \frac{g_0(y)^2}{f_0(y)} \varphi(y) dy \right|=0.
	\end{equation}
	In order to prove \eqref{tesis_teo_1}, let us first compute the two integrals involved,
	\begin{align*}
		A &=\int_{X_j} f_0^j(y) \varphi(y) d\mu_j(y) \\
		&= \sum_{\mathbf{k}\in \mathcal{K}(j)} f_0^j(x_{\mathbf{k}}) \varphi(x_{\mathbf{k}})2^{ -mj}\\
		&= \sum_{\mathbf{k}\in \mathcal{K}(j)}\left(\frac{2^{-mj}}{|Q^j_{\mathbf{k}}|} \int_{Q^j_{\mathbf{k}}} f_0(y) dy\right) \varphi(x_{\mathbf{k}})\\
		&= \sum_{\mathbf{k}\in \mathcal{K}(j)}\int_{Q^j_{\mathbf{k}}} f_0(y)\varphi(x_{\mathbf{k}}) dy, 
	\end{align*}
	and
	\begin{align*}
		B =\int_Q f_0(y) \varphi(y) d(y) 
		= \sum_{\mathbf{k}\in \mathcal{K}(j)}\int_{Q^j_{\mathbf{k}}} f_0(y)\varphi(y) dy.   
	\end{align*}	
	Hence,
	\begin{equation*}
		|A-B|\leq \sum_{\mathbf{k}\in \mathcal{K}(j)}\int_{Q^j_{\mathbf{k}}} f_0(y)|\varphi(x_{\mathbf{k}})-\varphi(y)| dy
	\end{equation*}	
	and \eqref{tesis_teo_1} follows from the uniform continuity of $\varphi$ in $Q$ and the integrability of $f_0$ in $Q$.
	
	\noindent Proof of \eqref{tesis_teo_2}:
	\begin{align*}
		\left| \int_{X_j} \widetilde{g}_0^j \varphi d\mu_j - \int_Q g_0 \varphi dy \right|
		&\leq \left| \int_{X_j} (\widetilde{g}_0^j-g_0^j) \varphi d\mu_j \right|+ \left| \int_{X_j} g_0^j \varphi d\mu_j - \int_Q g_0 \varphi dy \right|\\
		&= I+II.
	\end{align*}
	For the second term $II$, we may argue as in \eqref{tesis_teo_1} since $|g_0|\in L^1(Q)$. Let us estimate $I$.
	\begin{align*}
		I &\leq \int_{X_j} |\alpha_j^{ -1/2}g_0^j-g_0^j| |\varphi| d\mu_j \\
		& \leq |\alpha_j^{ -1/2}-1|\ \, \|\varphi\|_{\infty} \int_{X_j} |g_0^j|  d\mu_j \\
		&\leq |\alpha_j^{ -1/2}-1|\ \,  \|\varphi\|_{\infty} \|g_0\|_1,
	\end{align*}
	which tends to zero as $j\to\infty$, since 
	\begin{equation}\label{eq:lim_aj}
		\alpha_j =\int_{X_j}\frac{(g_0^j)^2}{f_0^j} d\mu_j \underset{j\to\infty}{\longrightarrow} \int_Q \frac{g_0^2}{f_0} dy =1.
	\end{equation}
In fact, since $f_0^j(x_{\mathbf{k}})=\fint_{Q_{\mathbf{k}}^j} f_0 $ and $g_0^j(x_{\mathbf{k}})=\fint_{Q_{\mathbf{k}}^j} g_0 $, (the average value in $Q_{{\mathbf{k}}^j}$ of $f_0$ and $g_0$, respectively) the functions $\psi_j:Q\rightarrow \mathbb{R}^{+}$ given by $\psi_j\doteq\sum_{k\in \mathcal{K}(j)} \frac{g_0^j(x_{\mathbf{k}})^2}{f_0^j(x_{\mathbf{k}})}\mathds{1}_{Q_{\mathbf{k}}^j}$, converges pointwise to $\frac{g_0^2}{f_0}$ in $Q$, due to Lebesgue differentiation theorem through dyadic cubes. Also, $f_0 \geq \delta >0$ in $Q$ and $g_0\in L^2(Q)$ implies that 
\[\psi_j (x)\leq \frac{1}{\delta} \sum_{k\in \mathcal{K}(j)} \left(\frac{1}{|Q_{\mathbf{k}}^j|}\int_{Q_{\mathbf{k}}^j}g_0\, dy \right)^2\mathds{1}_{Q_{\mathbf{k}}^j}(x) \leq \frac{1}{\delta} \|g_0\|^2_{L^2(Q)} \sum_{k\in \mathcal{K}(j)} \frac{1}{|Q_{\mathbf{k}}^j|}\mathds{1}_{Q_{\mathbf{k}}^j}(x)\]
for all $x\in Q$, and the Lebesgue dominated convergence theorem finishes the proof of our claim \eqref{eq:lim_aj}, since $\alpha_j=\int_Q \psi_j\, dy \to \int_Q\frac{g_0^2}{f_0}\, dy=1$ as $j\rightarrow \infty$.
	
	\noindent Proof of \eqref{tesis_teo_3}:
	\begin{align*}
		\left| \int_{X_j} \frac{(\widetilde{g}_0^j)^2}{f_0^j} \varphi d\mu_j - \int_Q \frac{g_0^2}{f_0} \varphi dy \right| &\leq
		\left| \int_{X_j} \frac{(\widetilde{g}_0^j)^2-(g_0^j)^2}{f_0^j} \varphi d\mu_j \right| +
		\left| \int_{X_j} \frac{(g_0^j)^2}{f_0^j} \varphi d\mu_j - \int_Q \frac{g_0^2}{f_0} \varphi dy \right|\\
		&\leq I_j + {II}_j.
	\end{align*}
	Let us first estimate $I_j$,
	\begin{align*}
		I_j &\leq \left|\left(\frac{1}{\alpha_j}-1\right) \int_{X_j} \frac{(g_0^j)^2}{f_0^j} \varphi d\mu_j \right| \\
		&\leq \|\varphi\|_{\infty}\left|\frac{1}{\alpha_j}-1\right| \int_{X_j} \frac{(g_0^j)^2}{f_0^j} \varphi d\mu_j \\
		&=\leq \|\varphi\|_{\infty} |1-\alpha_j|,
	\end{align*}
	which from \eqref{eq:lim_aj} tends to zero when $j$ tends to infinity. Let us finally estimate ${II}_j$.
	\begin{align*}
		{II}_j &\leq \left| \sum_{k\in \mathcal{K}(j)} \frac{g_0^j(x_{\mathbf{k}})^2}{f_0^j(x_{\mathbf{k}})}\varphi(x_{\mathbf{k}})|Q_{\mathbf{k}}^j|-\int_Q\frac{g_0^2}{f_0}\varphi \, dy \right|\\
		&= \left|\int_Q\left( \sum_{k\in \mathcal{K}(j)} \frac{g_0^j(x_{\mathbf{k}})^2}{f_0^j(x_{\mathbf{k}})}\varphi(x_{\mathbf{k}})\mathds{1}_{Q_{\mathbf{k}}^j}-\frac{g_0^2}{f_0}\varphi \right) dy \right|.
	\end{align*}
	Now, from the continuity of $\varphi$ and, again, from the dyadic version of the differentiation Lebesgue theorem, we see that the function we are integrating on $Q$ tends to zero as $j$ tends to infinity almost everywhere. But, since $\varphi$ is bounded, $g\in L^2(Q)$ and $f_0\geq \delta >0$, we can apply the dominated convergence Lebesgue theorem to finish the proof of the theorem.
\end{proof}

\section{Examples and graphics}\label{secExamplesAndGraphics}
In this section, using \texttt{Octave}, we compute the explicit solutions of the geodesics of densities in some simple but illustrative situations.

Let us first start by the discrete case when $U=\{\theta_0=(\theta_1,\dots, \theta_n): 0<\theta_i<1, i=1, \dots n, \textrm{ and }\theta_{n+1}=1-\sum_{i=1}^n\theta_i>0\}$. In order to adapt our situation to the setting of Theorem~\ref{thm:MainResultUnitSpeedCondition} let us take $X=\{x_1,\dots,x_{n+1}\}$ and $\mu$ the counting measure on $X$. Let $f_0: X\to\mathbb{R}^+$ be given by $f_0(x_i)=\theta^0_i$, with $\theta^0_i >0$, $i=1,\dots,n+1$ and $\theta^0_1+\dots+\theta^0_{n+1}=1$. This $f_0$ identifies one and only one point $\theta^0=(\theta^0_1,\dots,\theta^0_n)$ in $U$. This will be the initial point of the geodesic in $U$. Following with the notation of Theorem~\ref{thm:MainResultUnitSpeedCondition}, we have to choose an initial velocity given by $g_0: X\to\mathbb{R}$, $g_0(x_i)=v^0_i$, satisfying two conditions: 
\begin{enumerate}[(a)]
	\item $v^0_1+\dots+v^0_{n+1}=0$, and
	\item $\frac{(v^0_1)^2}{\theta^0_1}+\dots+\frac{(v^0_{n+1})^2}{\theta^0_{n+1}}=1$.
\end{enumerate}
For a fixed $\theta^0$, this is equivalent to choose a $v^0=(v^0_1,\dots, v^0_n)$ in the $(n-1)$-dimensional ellipsoid given by
\begin{equation}\label{eq:ellipsoid}
    \sum_{i=1}^n \frac{(v^0_i)^2}{\theta^0_ i}+ \dfrac{( v^0_1+\dots+v^0_n)^2}{\theta^0_{n+1}}=1,
\end{equation}
which is the ``unit ball'' for the Fisher-Rao metric.
Hence, given $\theta^0$ and $v^0$, the geodesic in $U$ is the curve $\theta(t)=(\theta_1(t),\dots, \theta_n(t))$ given by
\begin{equation*} 
		\theta_i(t) = \theta^0_i\cos^2\left(\frac{t}{2}\right)+\frac{(v^0_i)^2}{\theta^0_i}\sin^2\left(\frac{t}{2}\right)+v^0_i\sin t	
\end{equation*}
for $i=1, \dots, n$.

Consider the simplest case $n=2$ with initial point $\theta^0=(\tfrac{1}{3},\tfrac{1}{3})$ which implies $\theta_3=\frac{1}{3}$. Equation \eqref{eq:ellipsoid} describe the ellipse in the plane of the of the variables $v^0_1$ and $v^0_2$, given by
\begin{equation*}
(v^0_1)^2+(v^0_2)^2+v^0_1v^0_2=\frac{1}{6}.
\end{equation*}
Figure~\ref{fig:figure61} shows some of these curves starting at the point $\theta^0=(\tfrac{1}{3},\tfrac{1}{3})$ for different initial velocities in the ellipse parametrized, with parameter $\tau\in\mathbb{R}$, by
\begin{align*}
	v^0_1(\tau) &= \frac{\sqrt{2}}{6}\left(-\sqrt{3}\cos\tau +\sin\tau\right),\\
	v^0_2(\tau) &= \frac{\sqrt{2}}{6}\left(\sqrt{3}\cos\tau + \sin\tau\right).
\end{align*}
\begin{figure}
	\includegraphics[width=10cm]{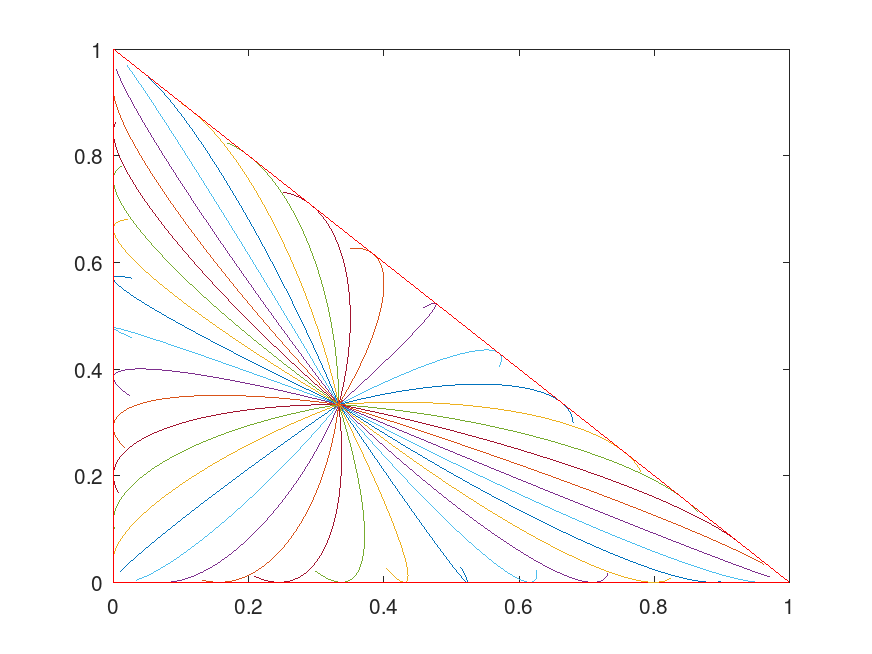}
	\caption{Several trajectories from $(\frac{1}{3},\frac{1}{3})$ for different initial velocities. Here we consider $t\in[0,\frac{\pi}{2}].$ }\label{fig:figure61}
\end{figure}
Note that all trajectories are ellipses, except when $v^0$ or $-v^0$  points to the vertices of the simplex, in which cases the trajectories are straight lines.

In figures~\ref{fig:figure612}, \ref{fig:figure613} and \ref{fig:figure614} we draw some complete trajectories for different values of $v^0$. 
\begin{figure}
    \centering
    \begin{minipage}{0.3\textwidth}
        \centering
        \includegraphics[width=\textwidth]{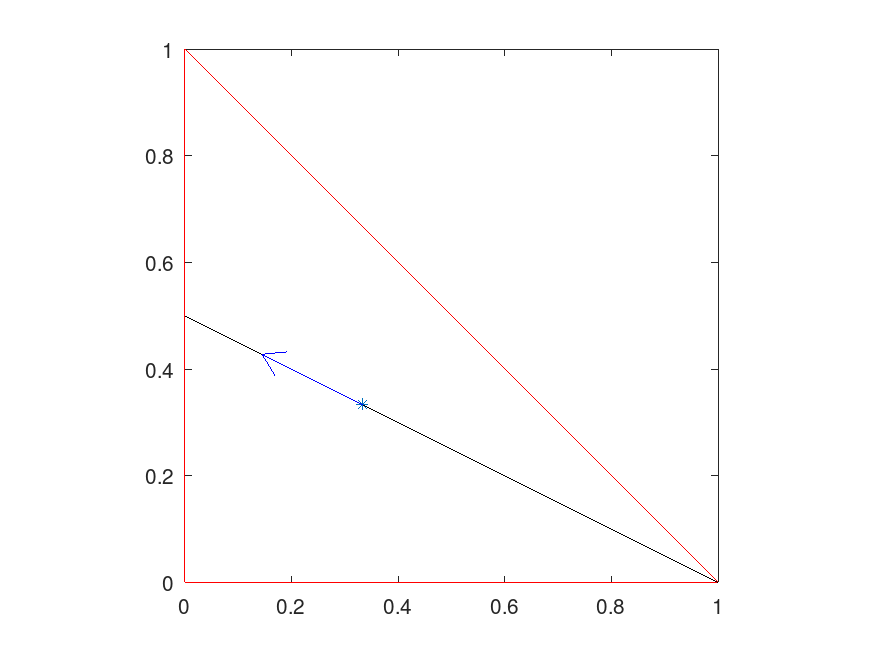} 
        \caption{$\tau=\frac{11\pi}{6}$}\label{fig:figure612}
    \end{minipage}\hfill
    \begin{minipage}{0.3\textwidth}
        \centering
        \includegraphics[width=\textwidth]{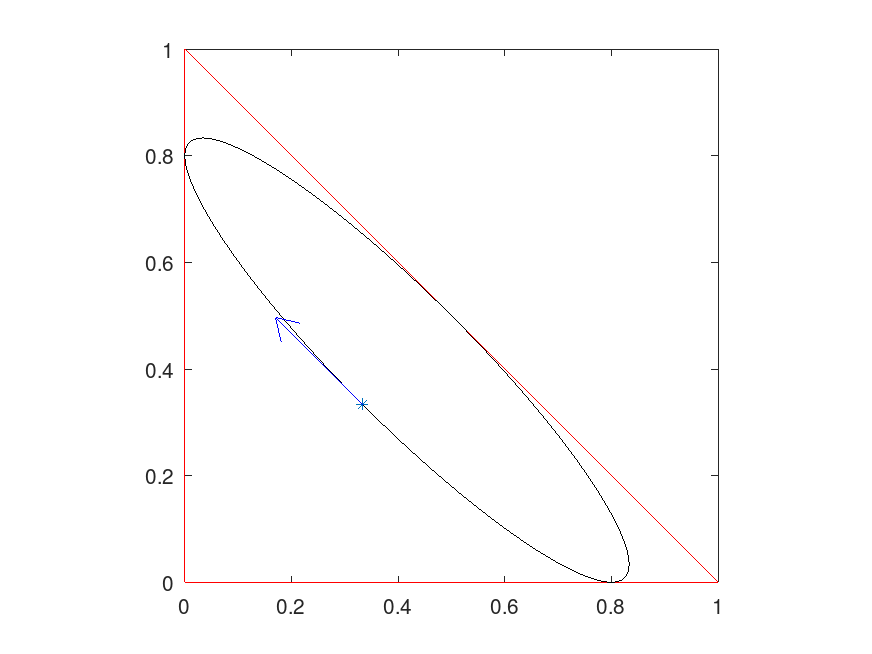} 
        \caption{$\tau \in(\frac{11\pi}{6},2\pi)$}\label{fig:figure613}
    \end{minipage}\hfill
    \begin{minipage}{0.3\textwidth}
        \centering
        \includegraphics[width=\textwidth]{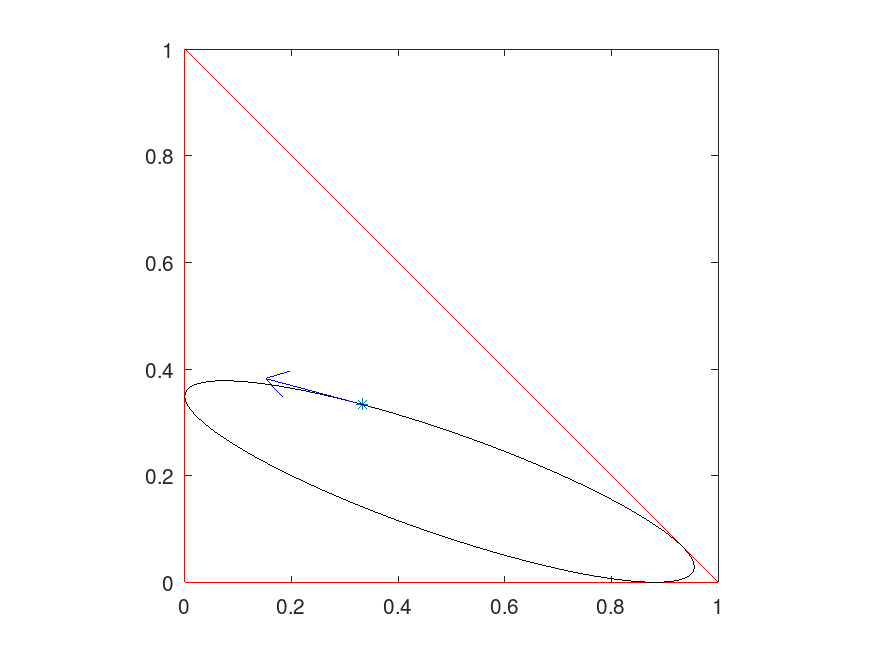} 
        \caption{$\tau \in(\frac{3\pi}{2},\frac{11\pi}{6})$}\label{fig:figure614}
    \end{minipage}
\end{figure}

For the case $n=3$, in Figure~\ref{fig:figure615} we draw several trajectories in $U=\{(\theta_1,\theta_2, \theta_3): 0<\theta_i<1, i=1,2,3; \theta_1+\theta_2+\theta_3>1\}$, all with initial point $\theta^0=[0.25, 0.25, 0.25]$ and different initial velocities.
\begin{figure}
	\includegraphics[width=13cm]{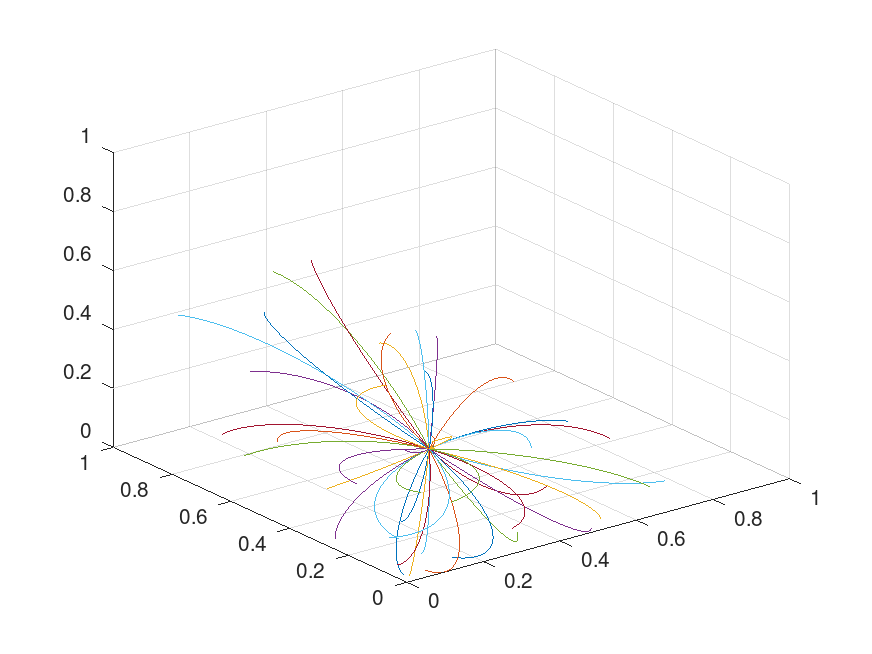}
	\caption{3-dimensional trajectories}\label{fig:figure615}
\end{figure}

Let us now show the dynamic of the Fisher transport of the uniform density in the interval $[0,1]$ to some more concentrated distribution. The space $(X,\mu)$ is now $([0,1],dx)$, where $dx$ denotes the Lebesgue measure. Take $f_0(x)\equiv 1$, the uniform density and $g_0: [0,1]\to \mathbb{R}$ be such that $\int_{[0,1]}g_0 dx=0$ and $\frac{g_0^2(x)}{f_0(x)}=g_0^2(x)=4\,\mathds{1}_{[0,1/4]}(x)$. It is clear that we can choose several functions $g_0$ satisfying above conditions. We illustrate the dynamics of $f(x,t)$ for $t\in [0,\pi]$ in two simple cases of $g_0$. Taking first $g_{0,1}(x)=2\left(\mathds{1}_{[0,1/8]}(x)-\mathds{1}_{(1/8,1/4]}(x)\right)$ and then $g_{0,2}(x)=2(\mathds{1}_{[0,1/16]}(x)-\mathds{1}_{(1/16,1/8]}(x)+\mathds{1}_{(1/8,3/16]}(x)-\mathds{1}_{(3/16,1/4]}(x))$ we obtain the frames depicted in Figure~\ref{fig:figure621} and \ref{fig:figure622}.

\begin{figure}
\begin{tabular}{|c|c|c|}
	\hline
	\includegraphics[width=5cm]{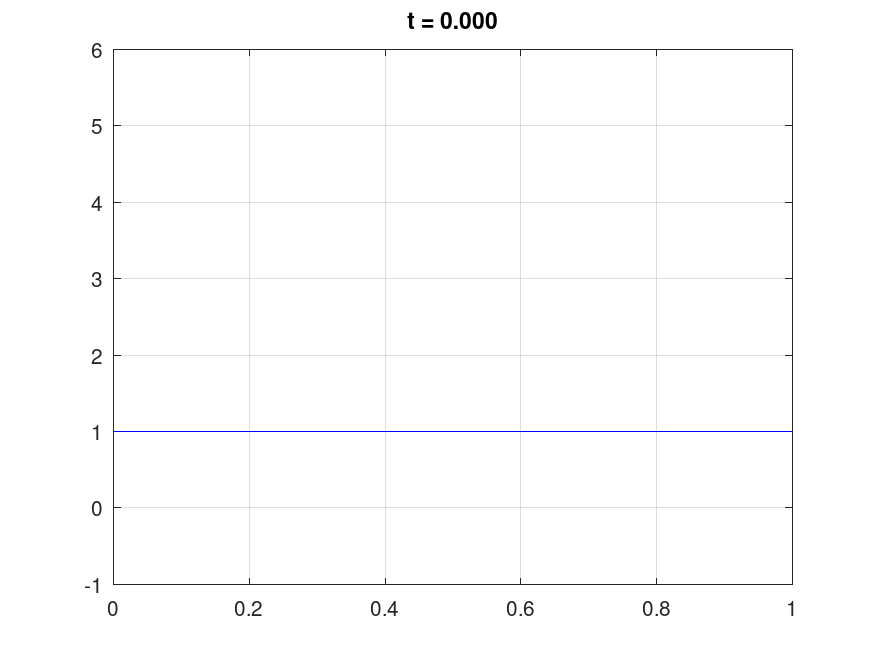} & \includegraphics[width=5cm]{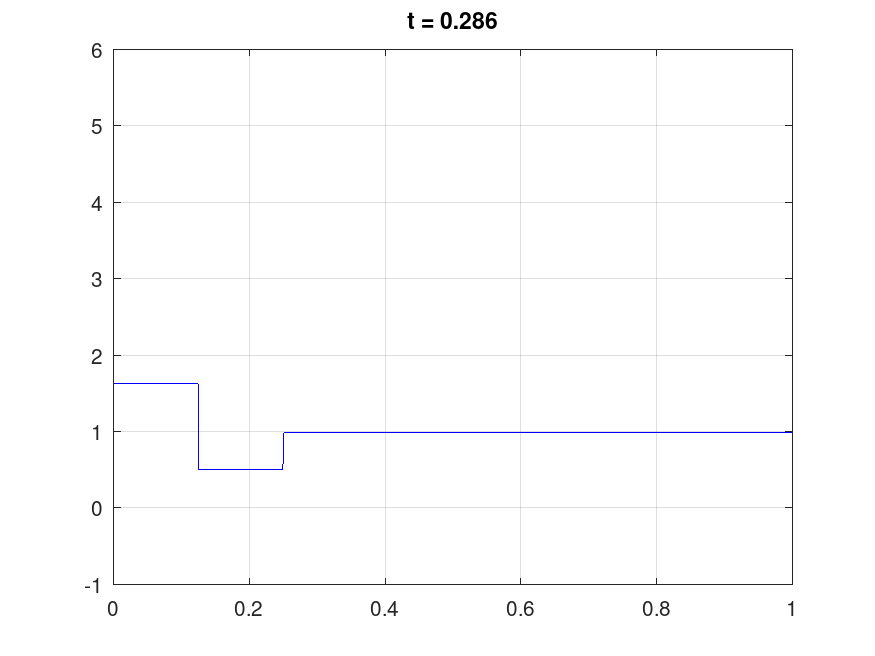} &  \includegraphics[width=5cm]{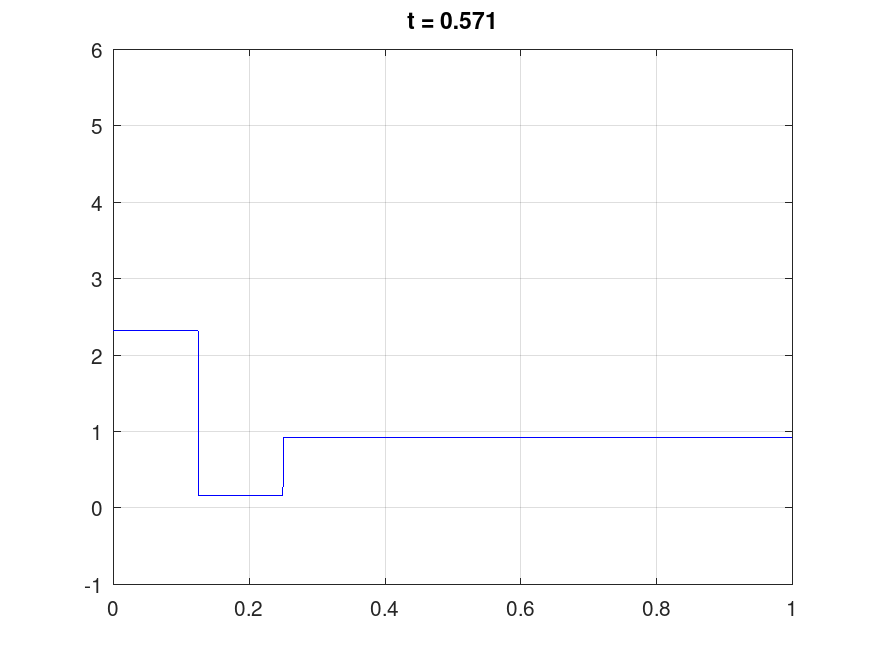}\\
	\hline
	\includegraphics[width=5cm]{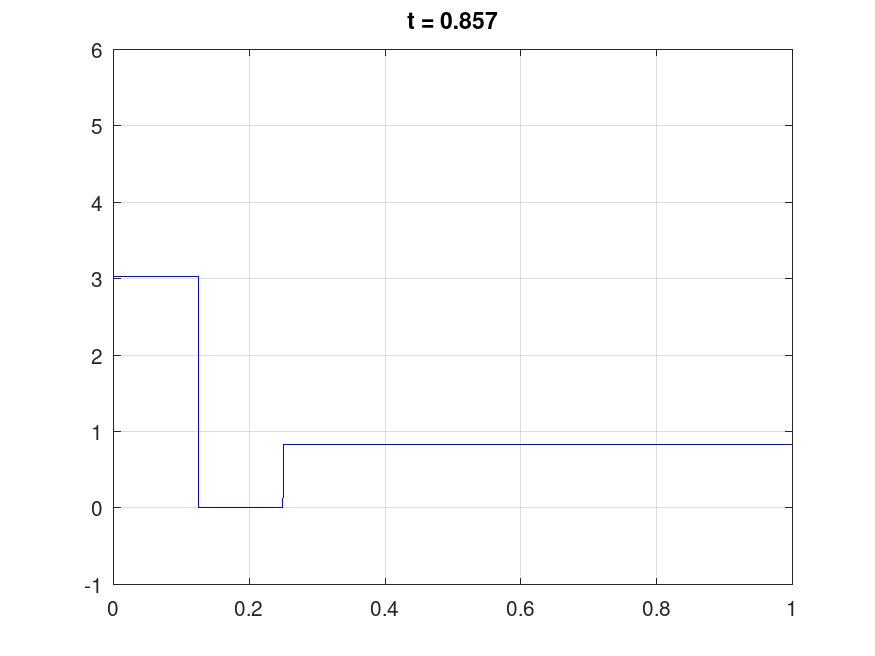} &
	\includegraphics[width=5cm]{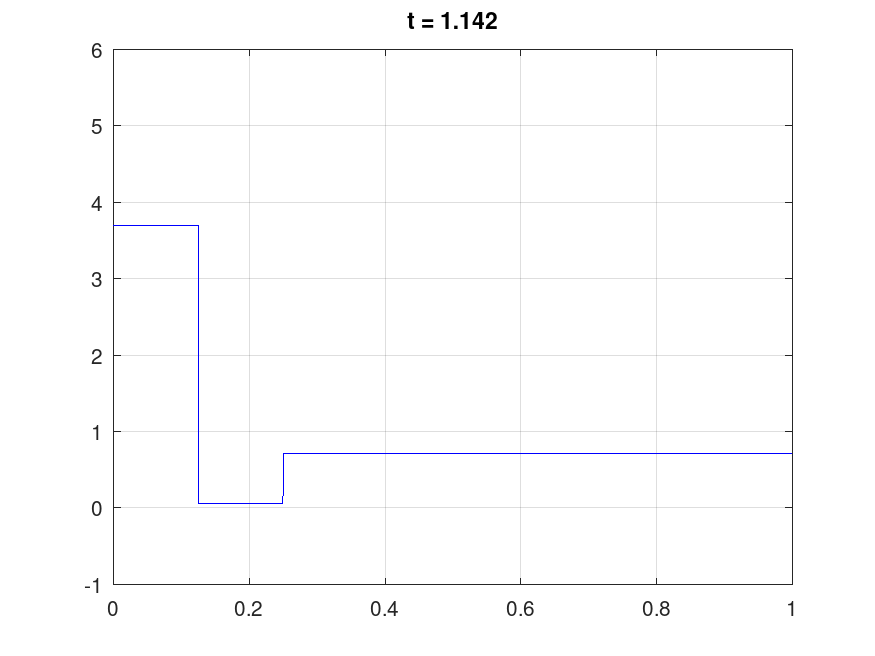} & \includegraphics[width=5cm]{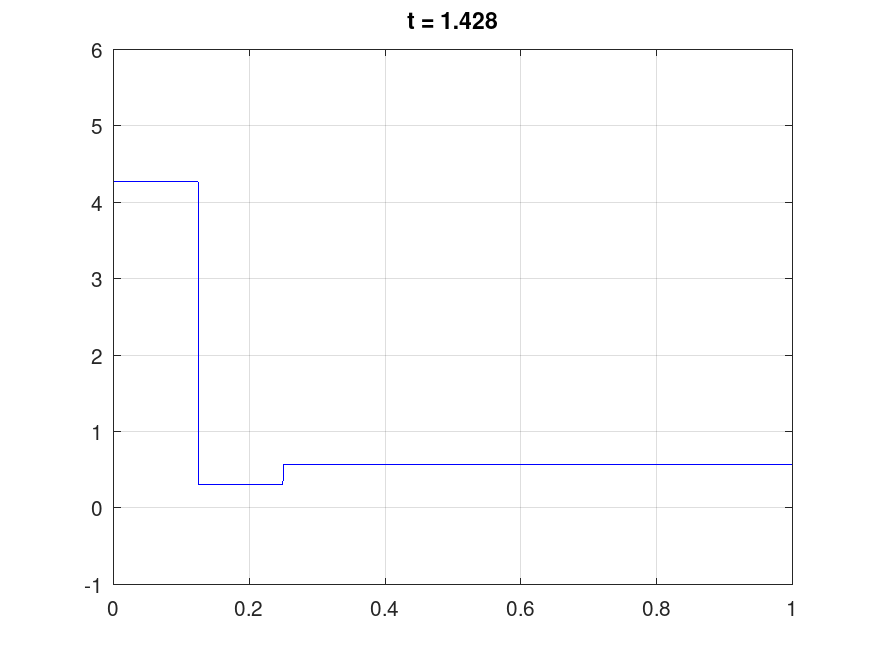}  \\
	\hline
	\includegraphics[width=5cm]{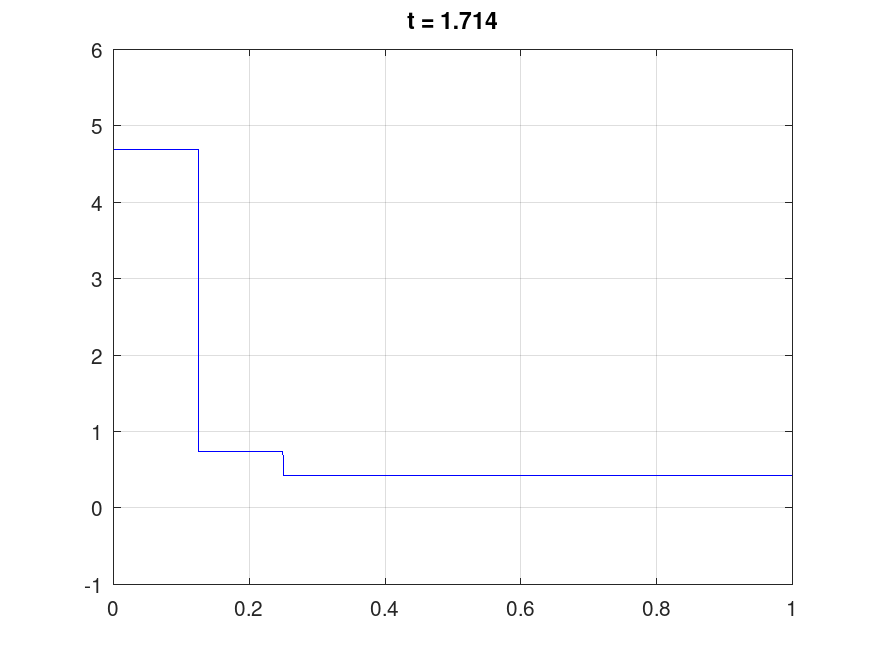} &
	\includegraphics[width=5cm]{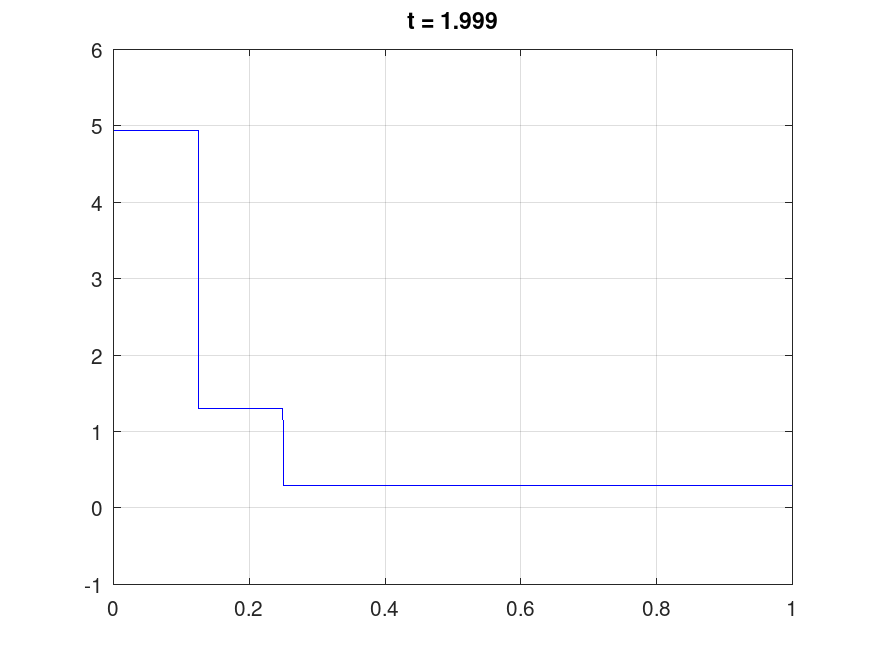} & \includegraphics[width=5cm]{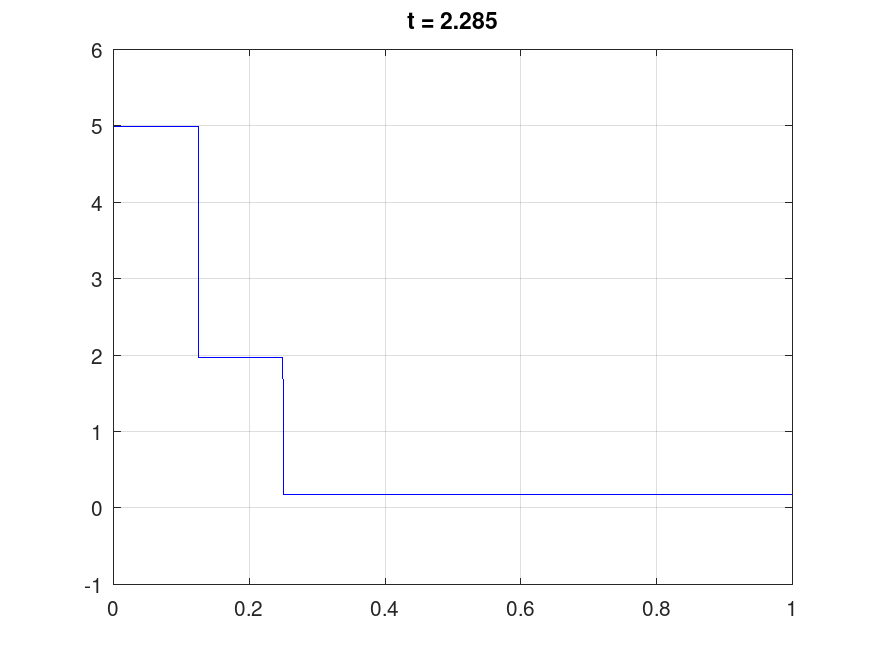} \\
	\hline
	\includegraphics[width=5cm]{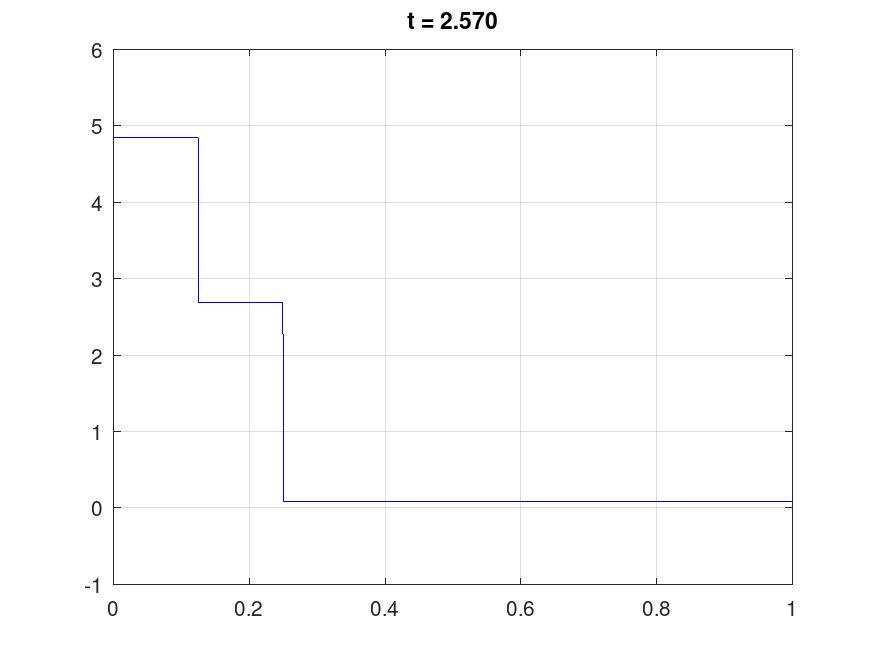} &
	\includegraphics[width=5cm]{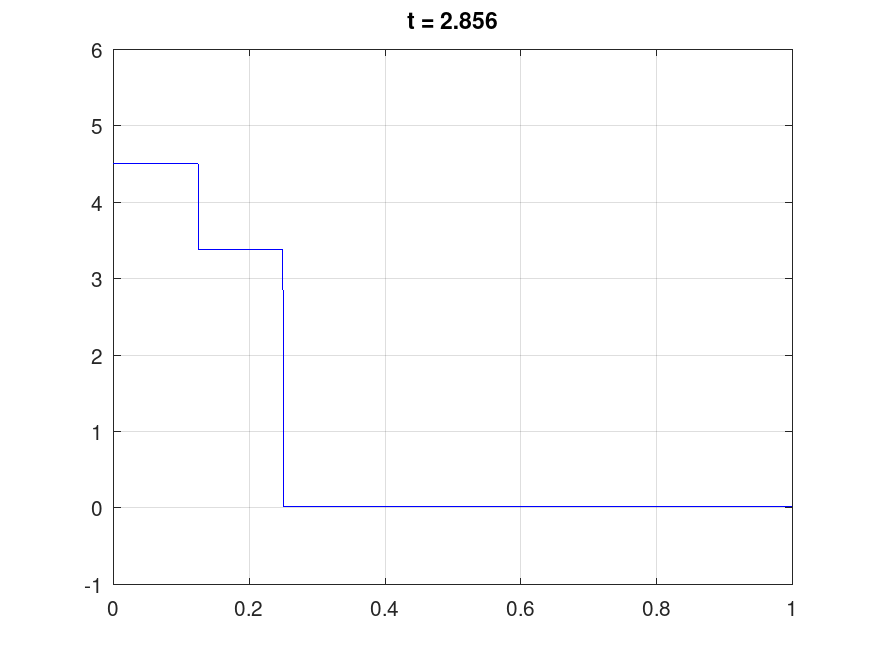} & \includegraphics[width=5cm]{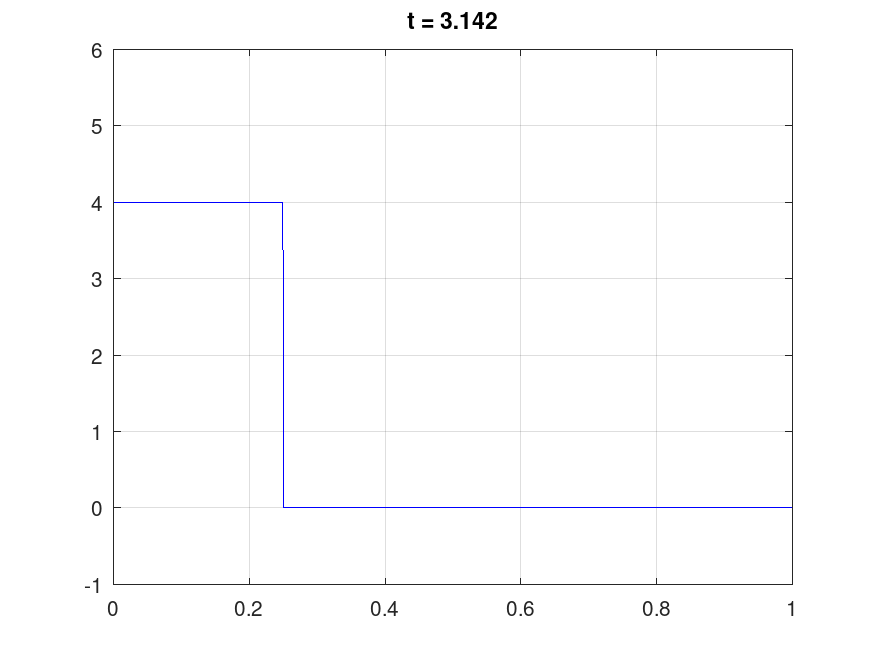} \\
	\hline
\end{tabular}
\caption{$g_{0, 1}$}\label{fig:figure621}
\end{figure}
\begin{figure}
\begin{tabular}{|c|c|c|}
	\hline
	\includegraphics[width=5cm]{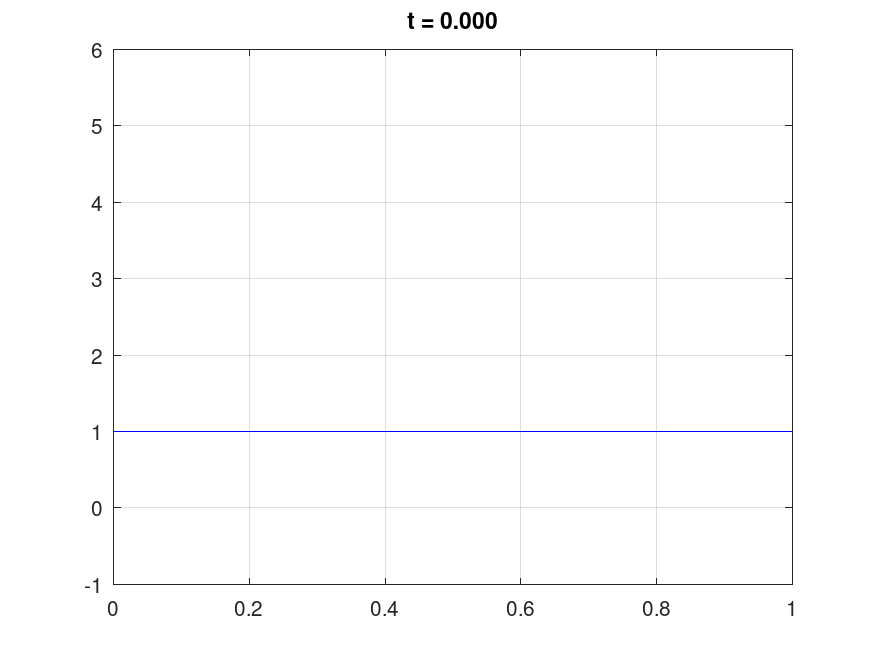} & \includegraphics[width=5cm]{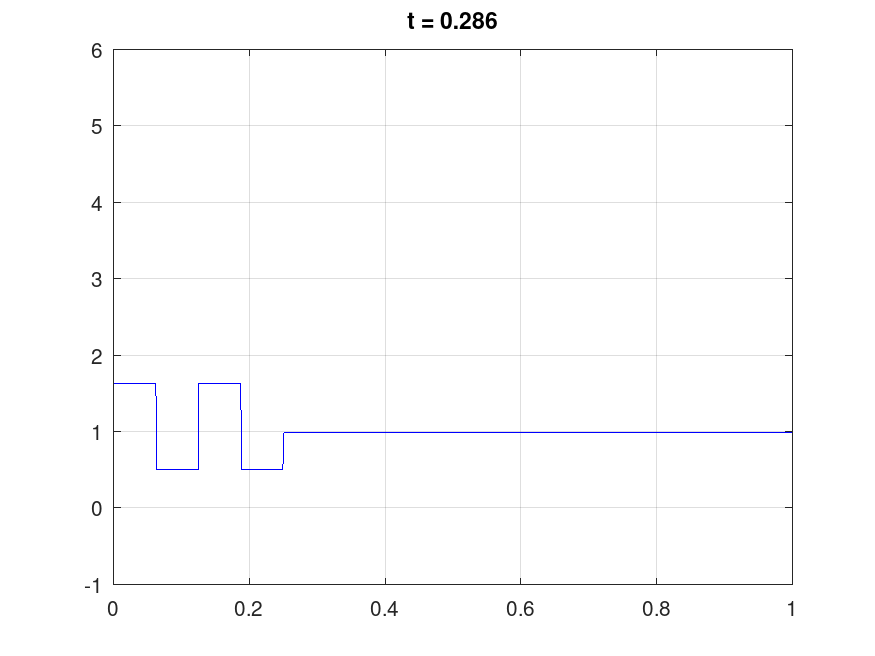} &  \includegraphics[width=5cm]{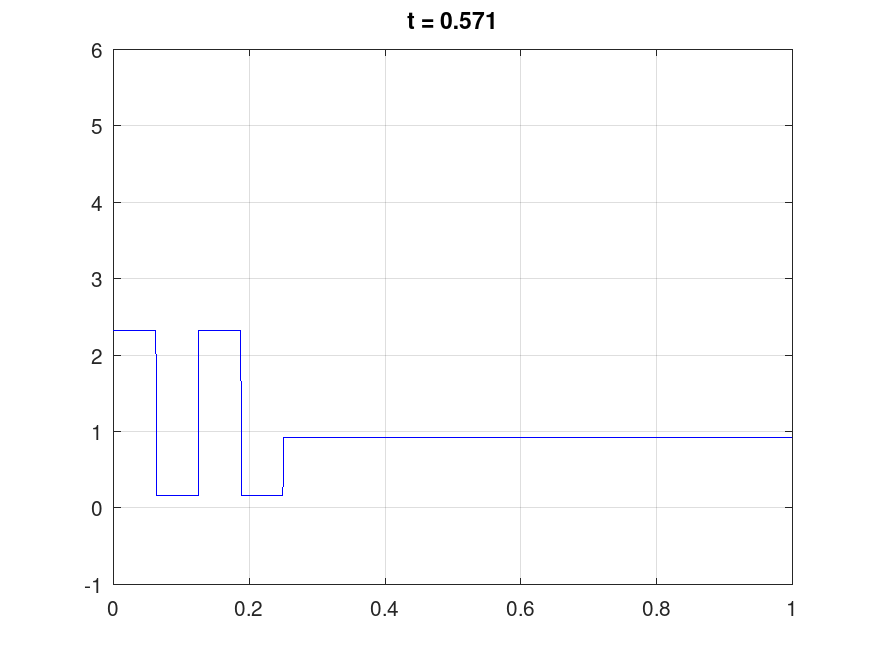}\\
	\hline
	\includegraphics[width=5cm]{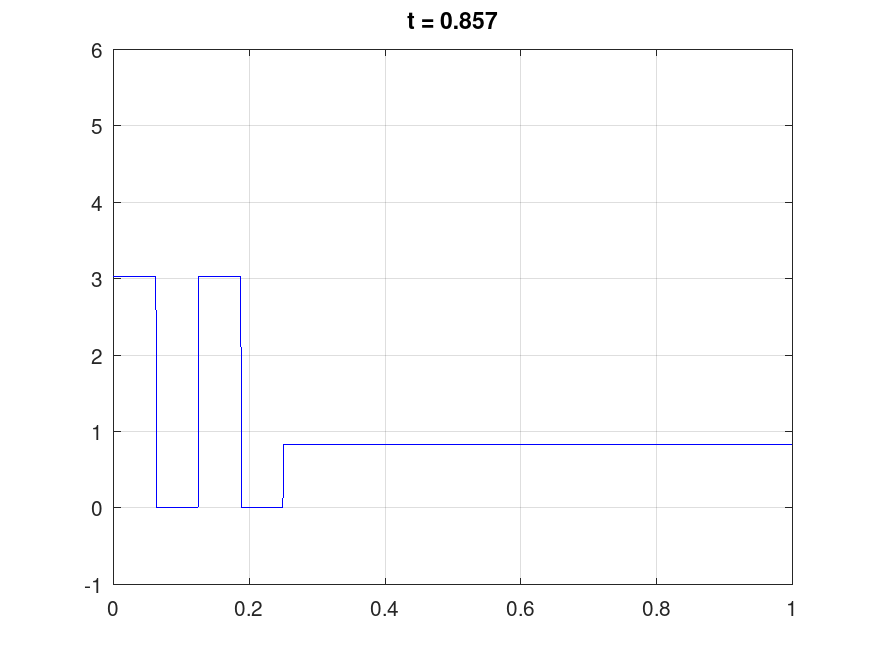} &
	\includegraphics[width=5cm]{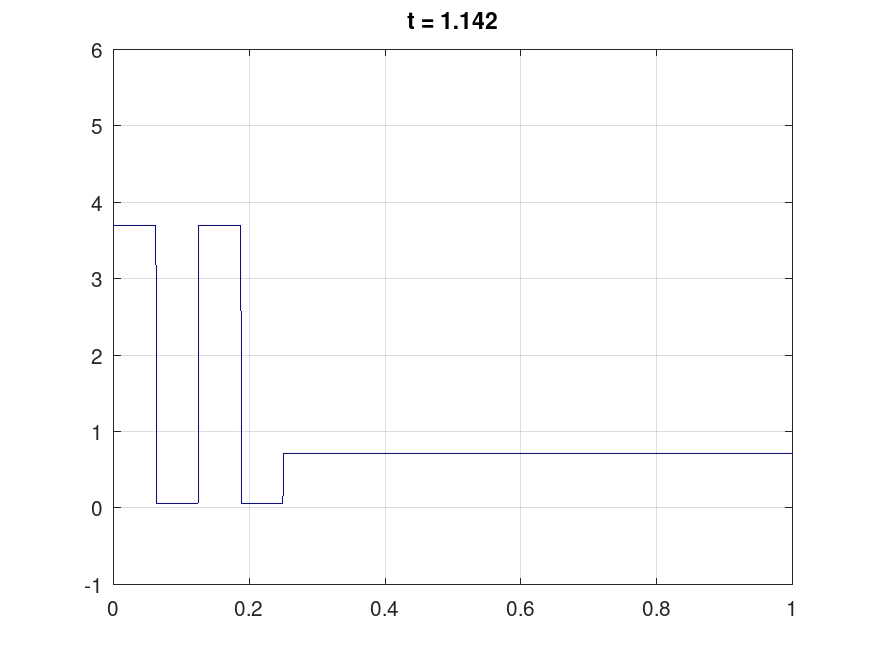} & \includegraphics[width=5cm]{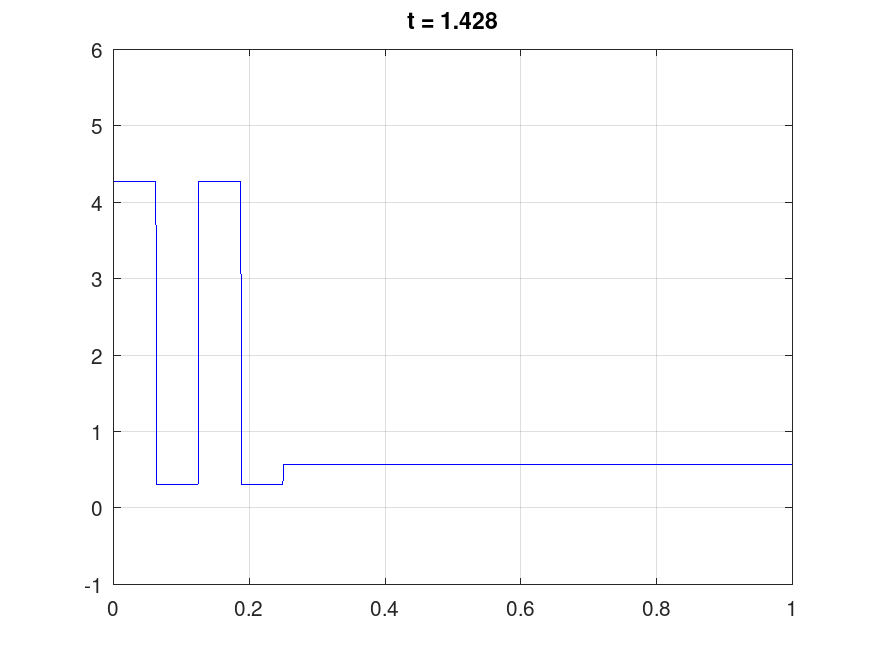}  \\
	\hline
	\includegraphics[width=5cm]{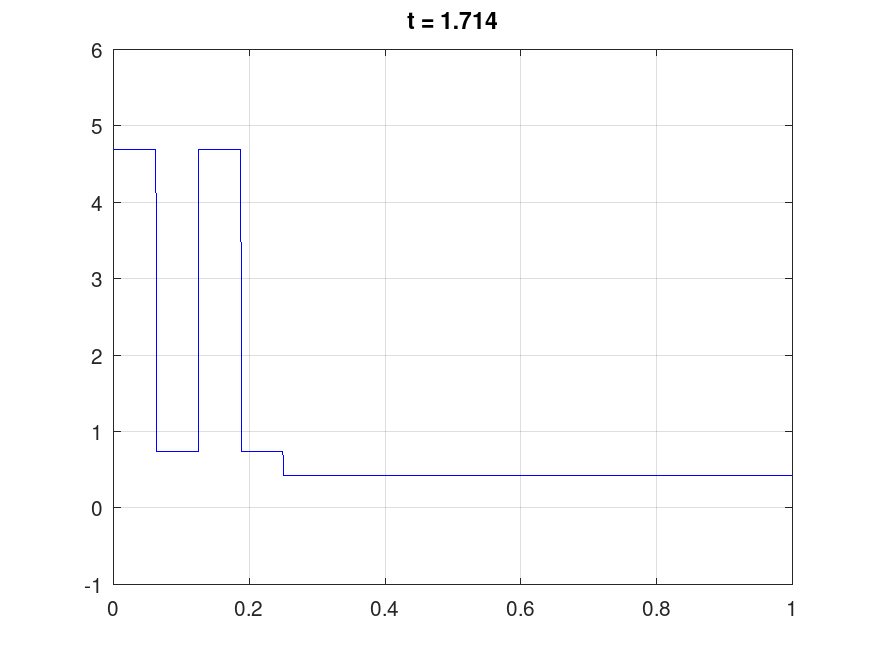} &
	\includegraphics[width=5cm]{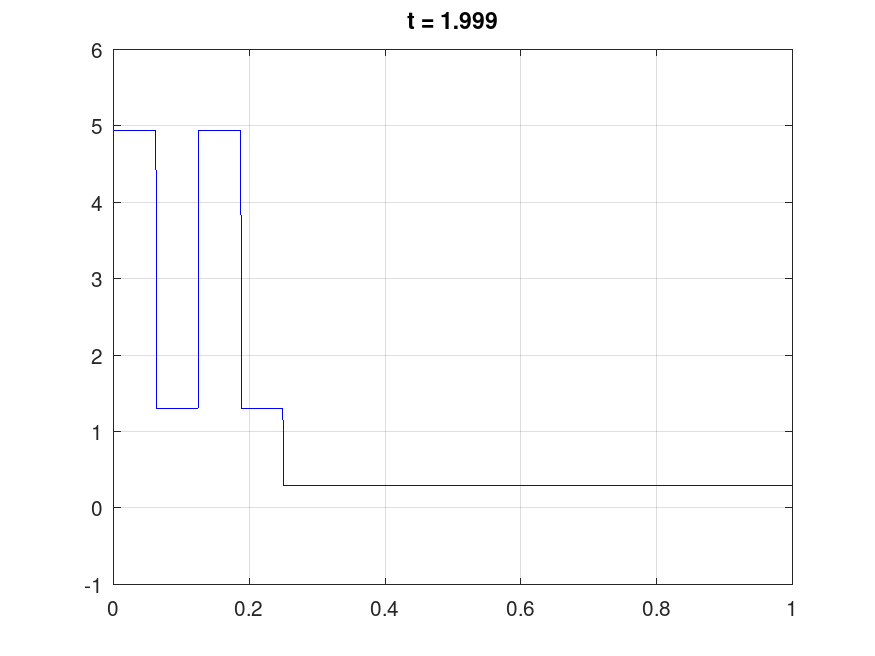} & \includegraphics[width=5cm]{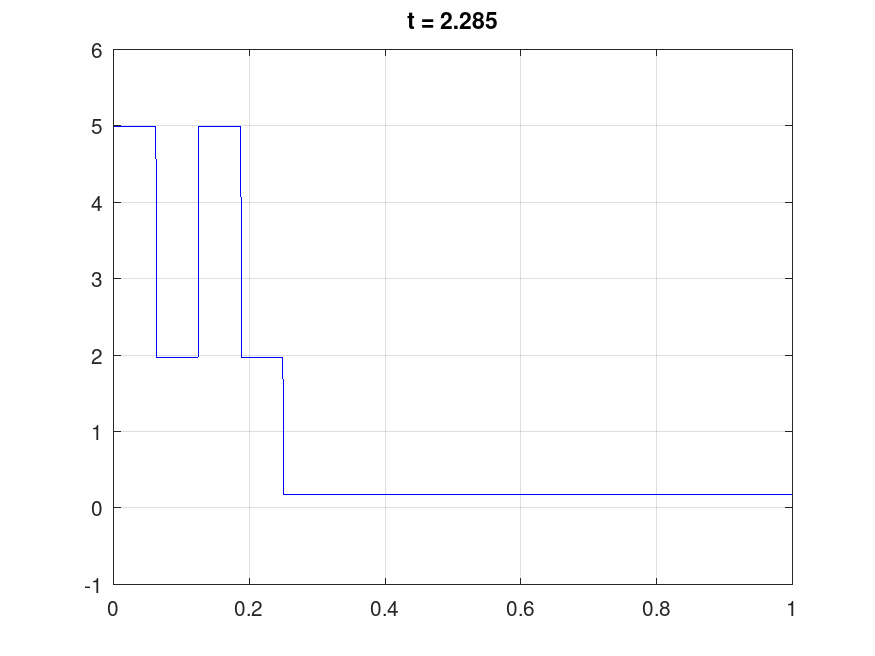} \\
	\hline
	\includegraphics[width=5cm]{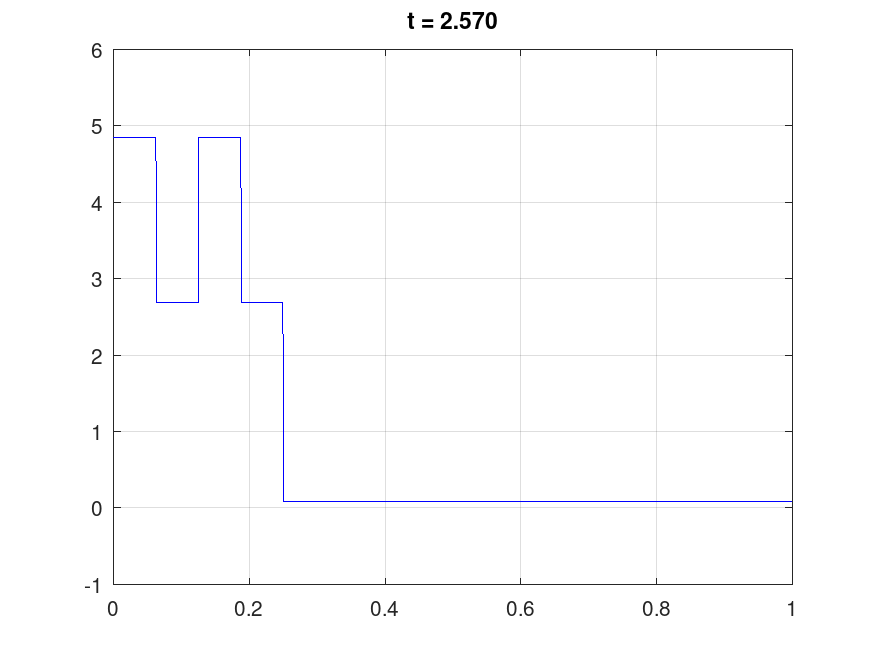} &
	\includegraphics[width=5cm]{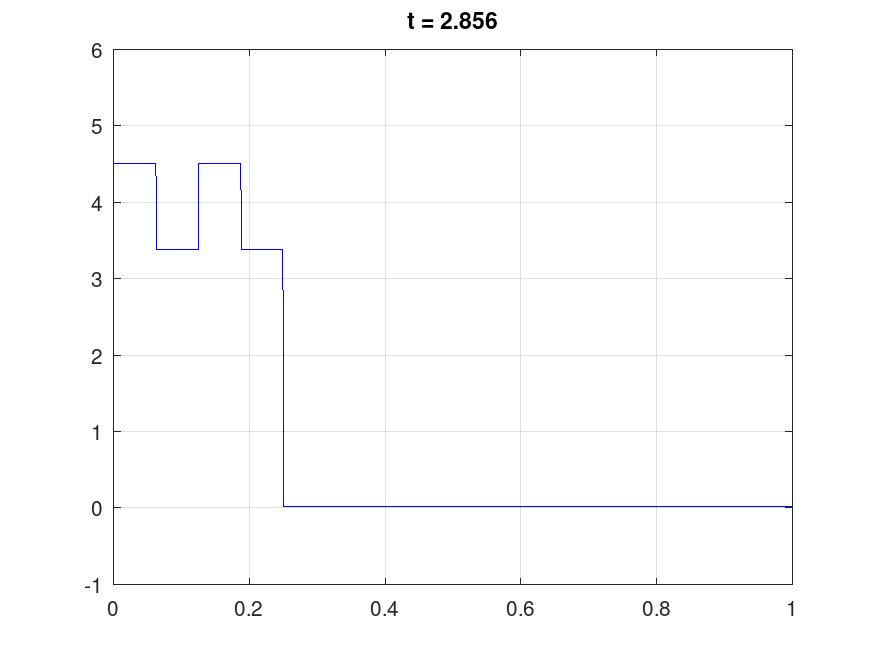} & \includegraphics[width=5cm]{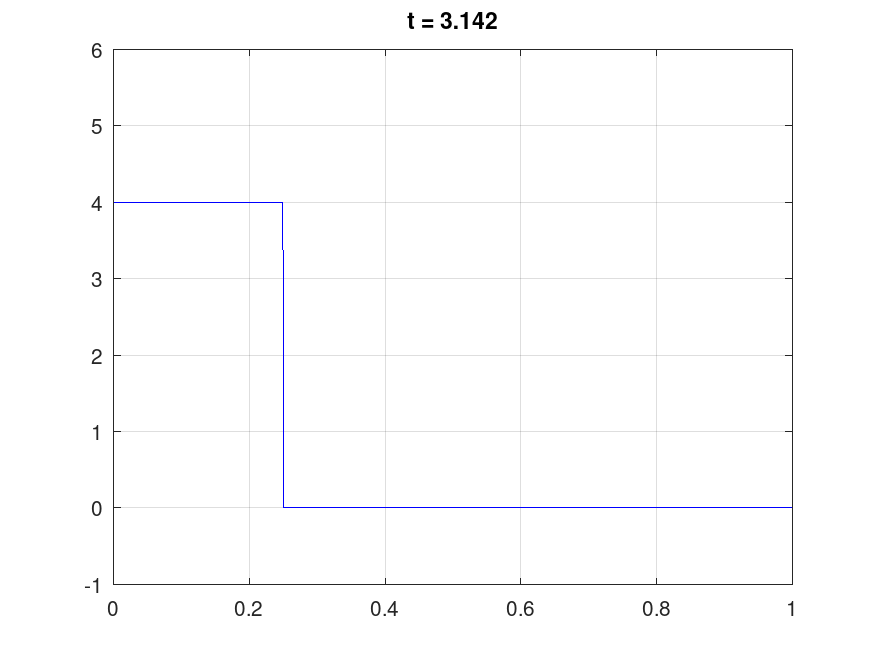} \\
	\hline
\end{tabular}
\caption{$g_{0, 2}$}\label{fig:figure622}
\end{figure}

It is clear that the density trajectory generated by the  Riemann Fisher geometry depends strongly on the function $g_0$ even when $f_0$ and $\frac{g_0^2}{f_0}$ are fixed.
This situation can be observed in even more clearly in the two dimensional case. In this situation $X=[0,1]^2$, $d\mu=dx dy$ is the area measure in the unit square. Take now $f_0(x,y)\equiv 1$ the uniform density and $g_0(x,y)$ a function in $[0,1]^2$ such that $\iint g_0 dx dy=0$ and $\frac{g_0^2}{f_0}=16\,\mathds{1}_{[0,1/4]^2}(x,y)$. In Figure~\ref{fig:figure631}, Figure~\ref{fig:figure632} and Figure~\ref{fig:figure633}, we depict the situation for three different choices of $g_0$:
\begin{align*}
	g_{0 1}(x,y) &=  4\left(\mathds{1}_{[0,1/4]\times [0,1/8]}(x,y)-\mathds{1}_{[0,1/4]\times [1/8,1/4]}(x,y)\right)\\
	g_{0 2}(x,y) &=  4\left(\mathds{1}_{[0,1/8]\times [0,1/8]\bigcup [1/8,1/4]\times [1/8,1/4]}(x,y)-\mathds{1}_{[1/8,1/4]\times [0,1/8]\bigcup [0,1/8]\times [1/8,1/4]}(x,y)\right)\\
	g_{0 3}(x,y) &=  4\left(\mathds{1}_E(x,y)-\mathds{1}_{[0,1]^2\setminus E}(x,y)\right)
\end{align*}
with 
\begin{equation*}
E=\bigcup_{\substack{k_1=0,1,2,3\\ k_2=0,1,2,3\\  k_1+k_2 \textrm{ even }}} Q_{k_1,k_2},	
\end{equation*}
and $Q_{k_1,k_2}=[0,\tfrac{1}{16}]^2 + (k_1,k_2)$.
\begin{figure}
\begin{tabular}{|c|c|c|}
	\hline
	\includegraphics[width=5cm]{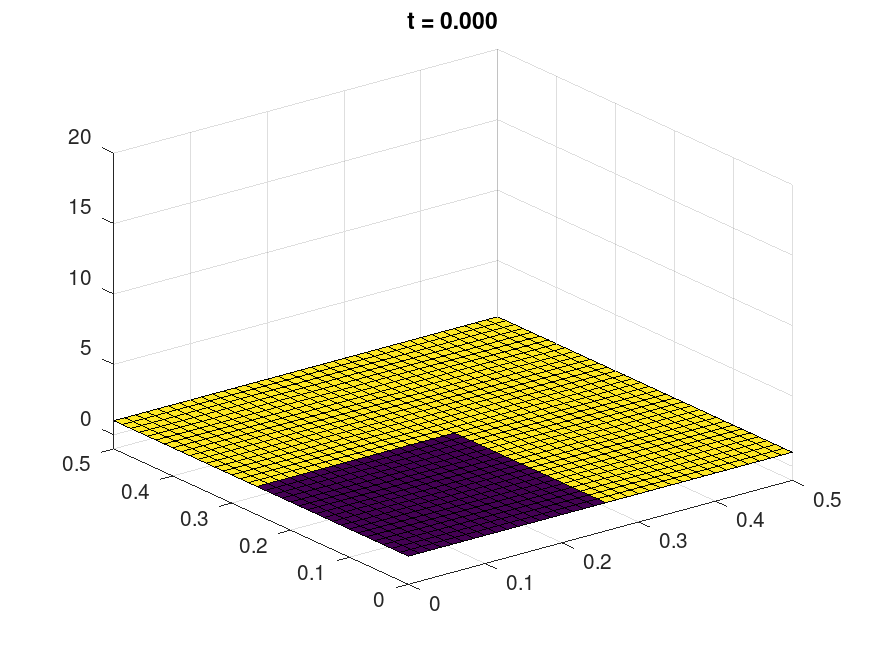} & \includegraphics[width=5cm]{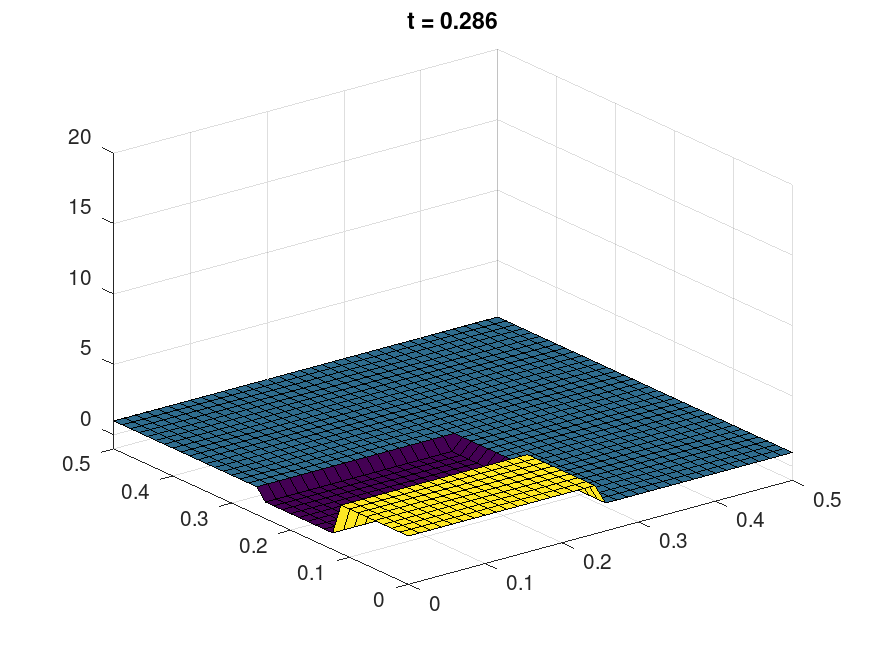} &  \includegraphics[width=5cm]{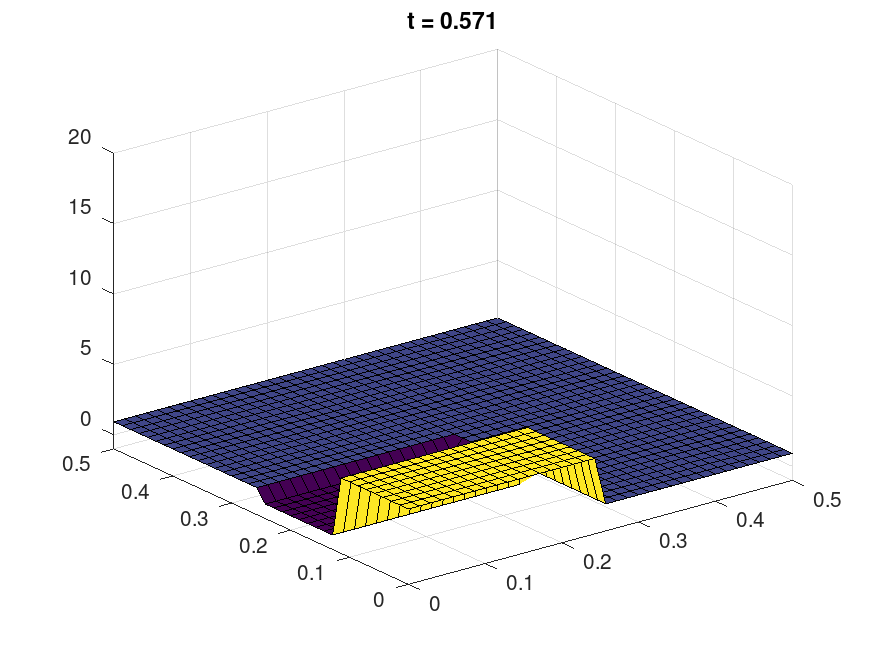}\\
	\hline
	\includegraphics[width=5cm]{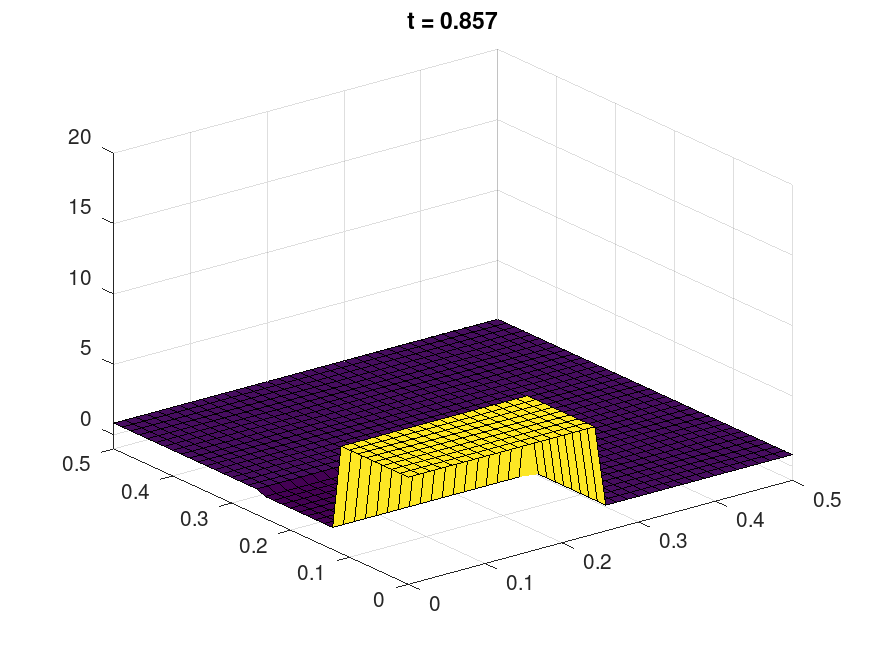} &
	\includegraphics[width=5cm]{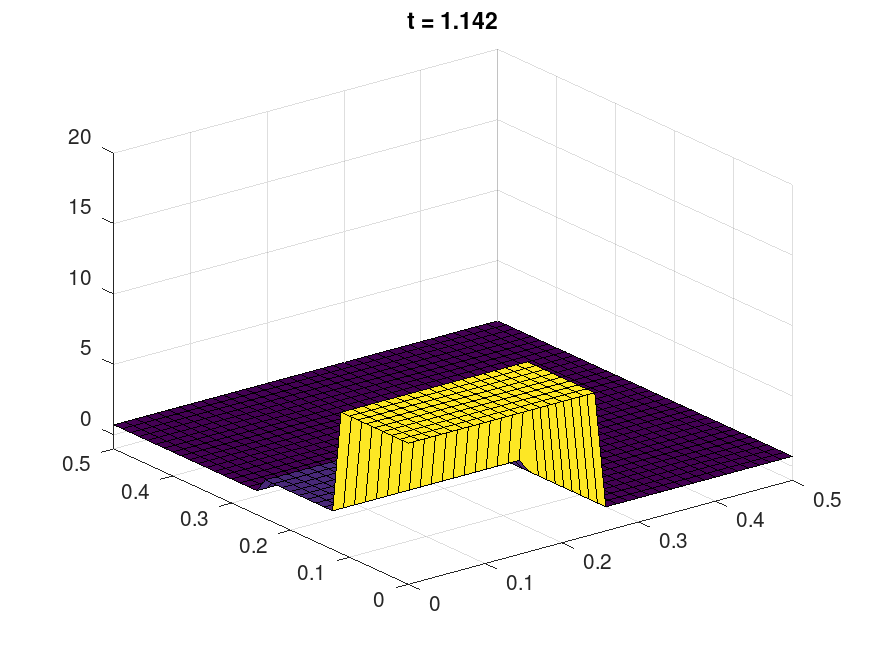} & \includegraphics[width=5cm]{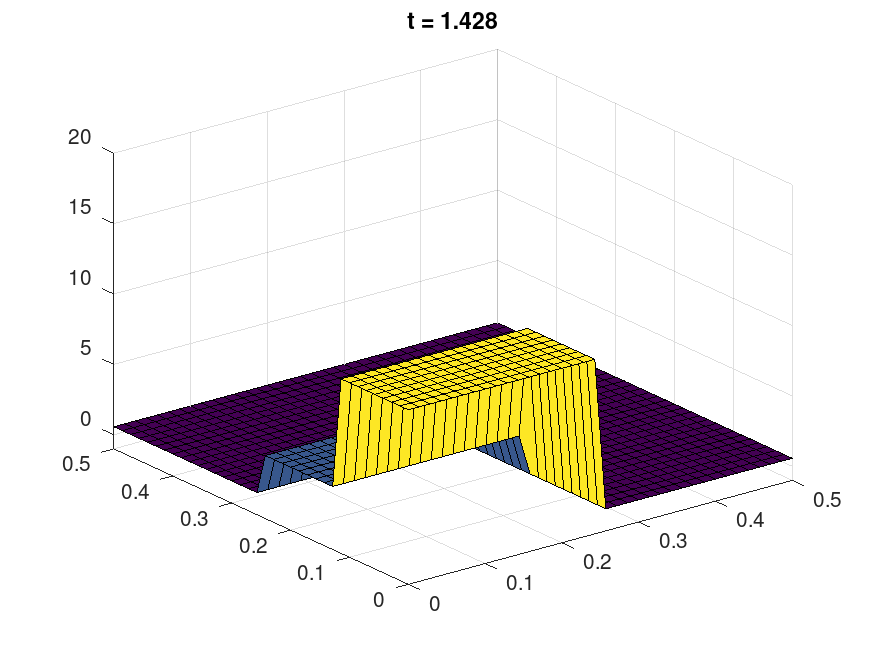}  \\
	\hline
	\includegraphics[width=5cm]{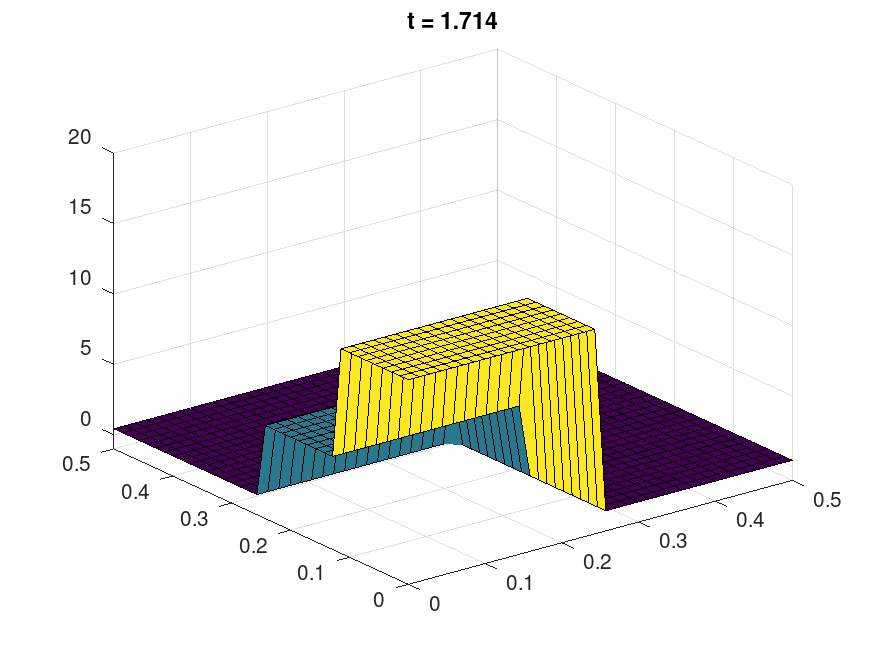} &
	\includegraphics[width=5cm]{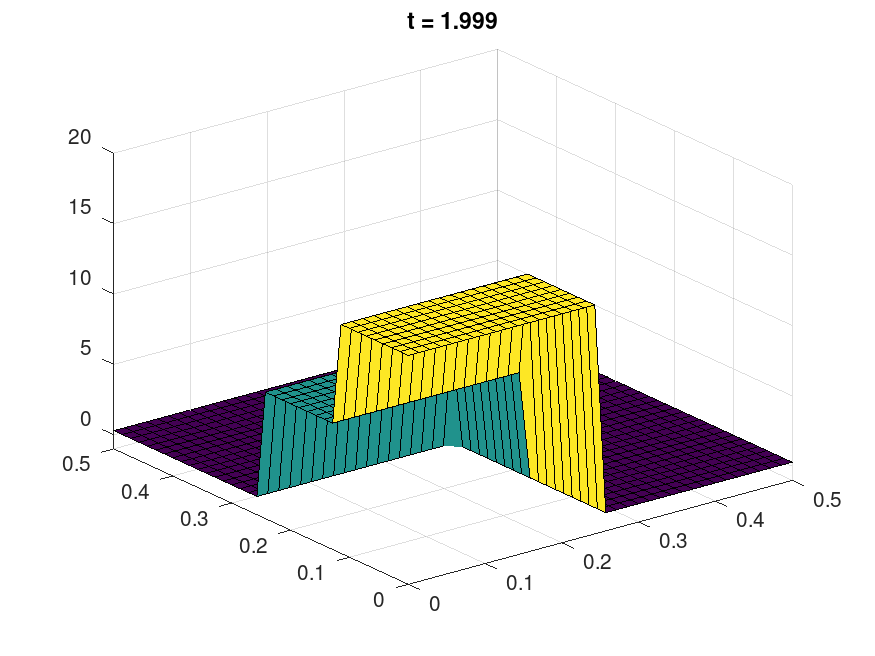} & \includegraphics[width=5cm]{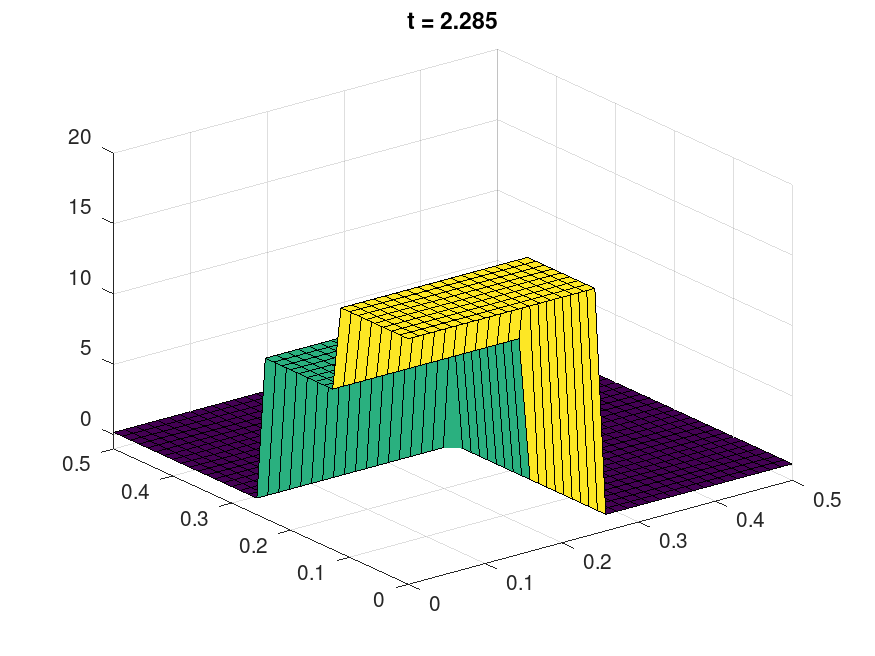} \\
	\hline
	\includegraphics[width=5cm]{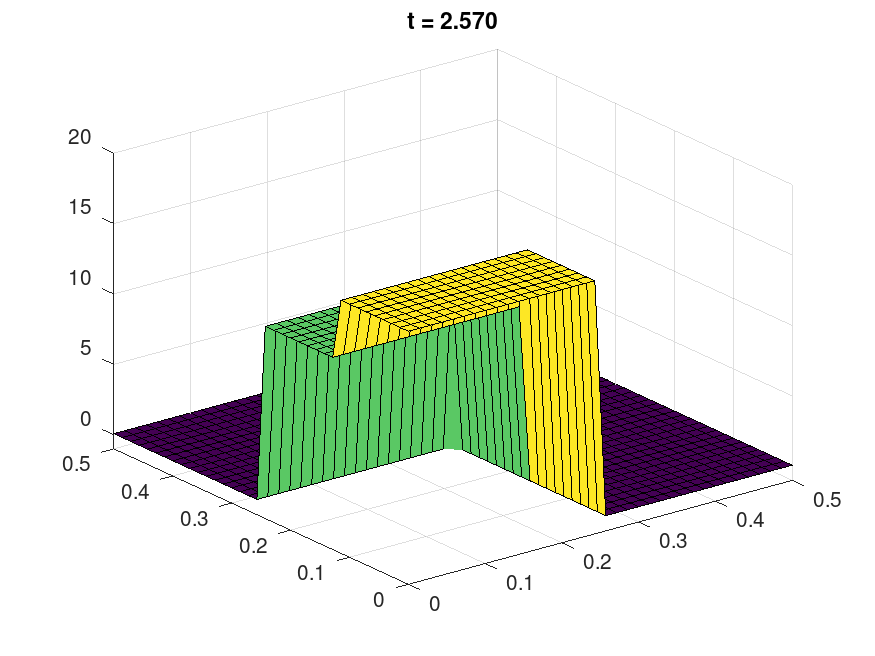} &
	\includegraphics[width=5cm]{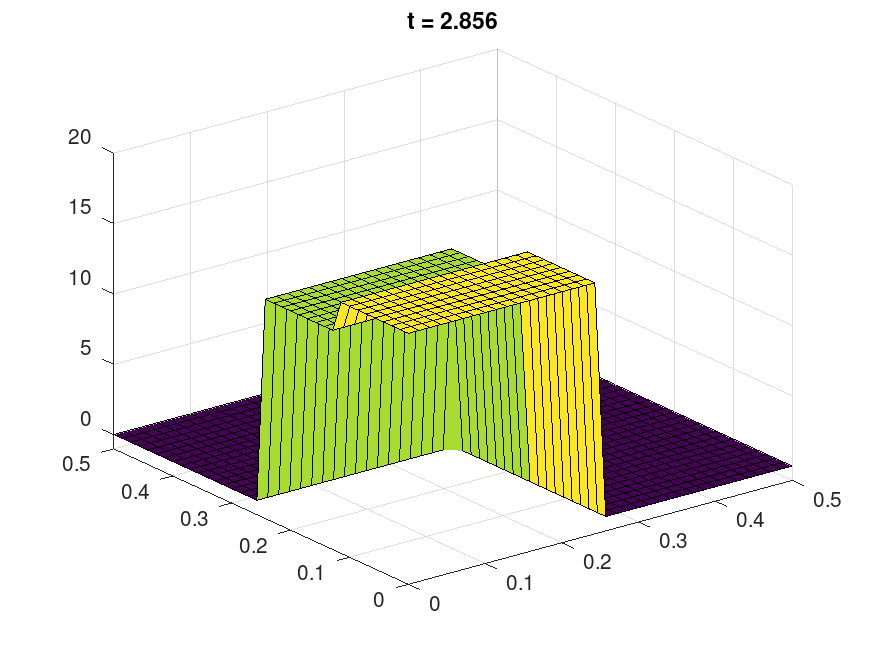} & \includegraphics[width=5cm]{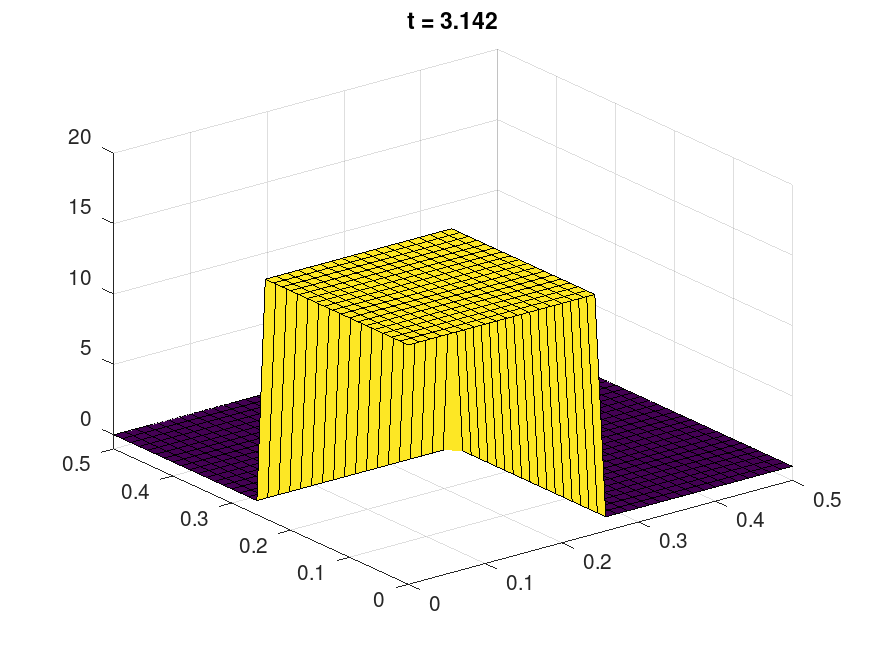} \\
	\hline
\end{tabular}
\caption{$g_{0 1}$}\label{fig:figure631}
\end{figure}
\begin{figure}
\begin{tabular}{|c|c|c|}
	\hline
	\includegraphics[width=5cm]{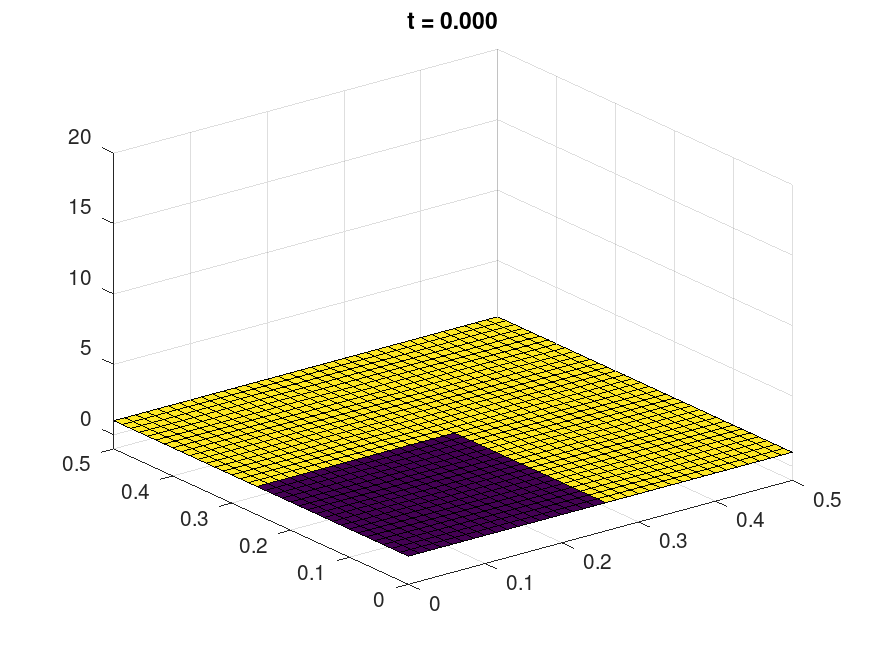} & \includegraphics[width=5cm]{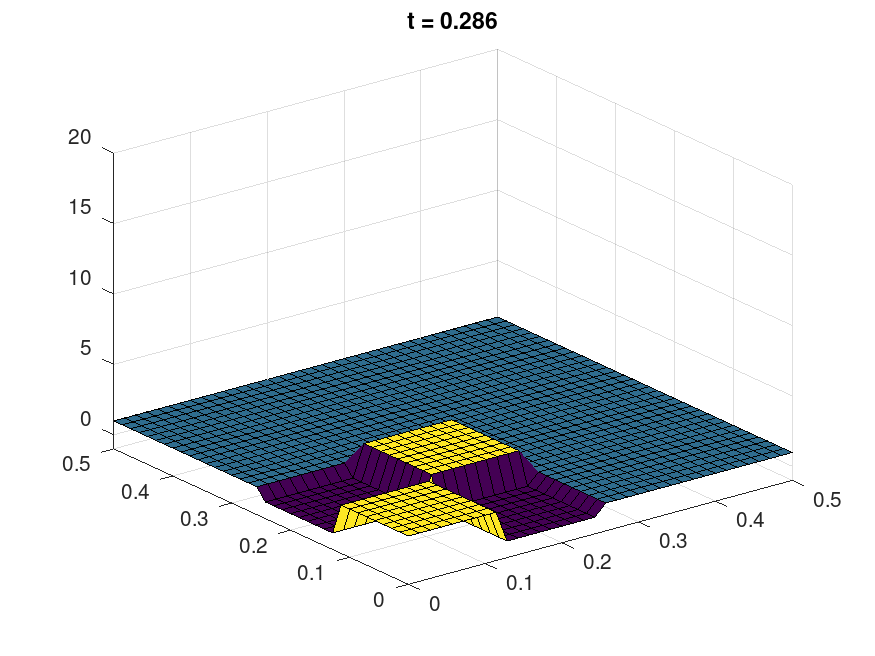} &  \includegraphics[width=5cm]{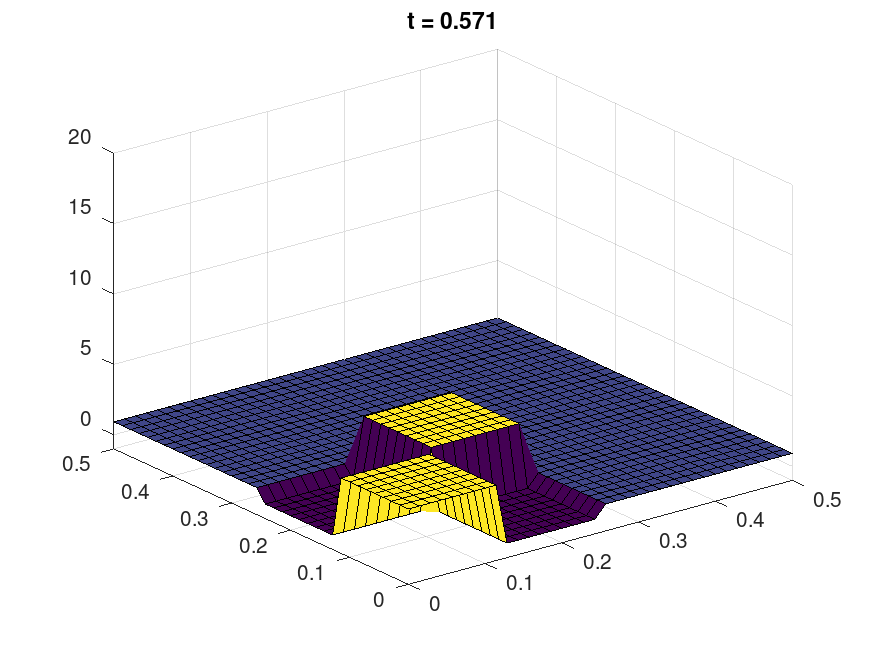}\\
	\hline
	\includegraphics[width=5cm]{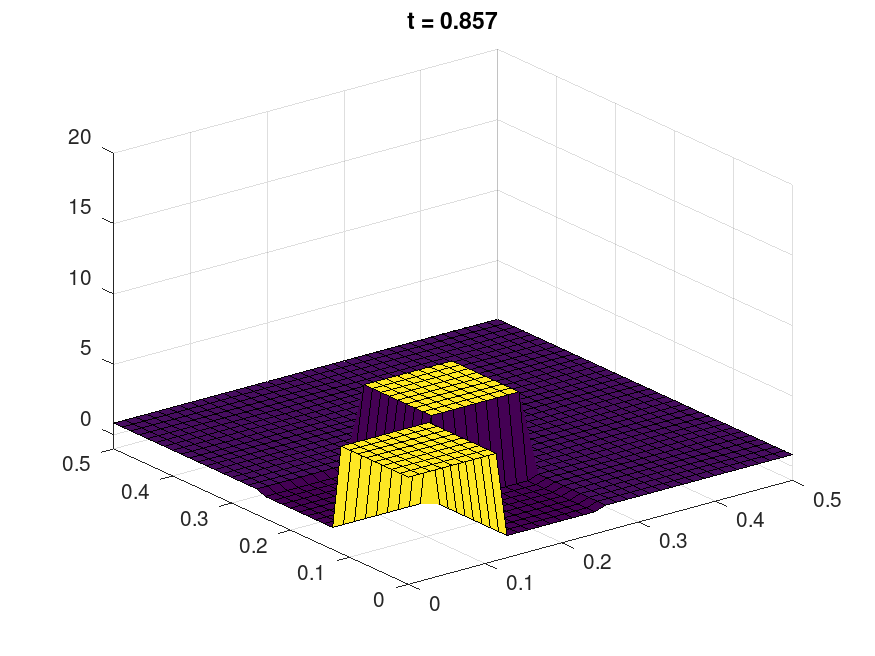} &
	\includegraphics[width=5cm]{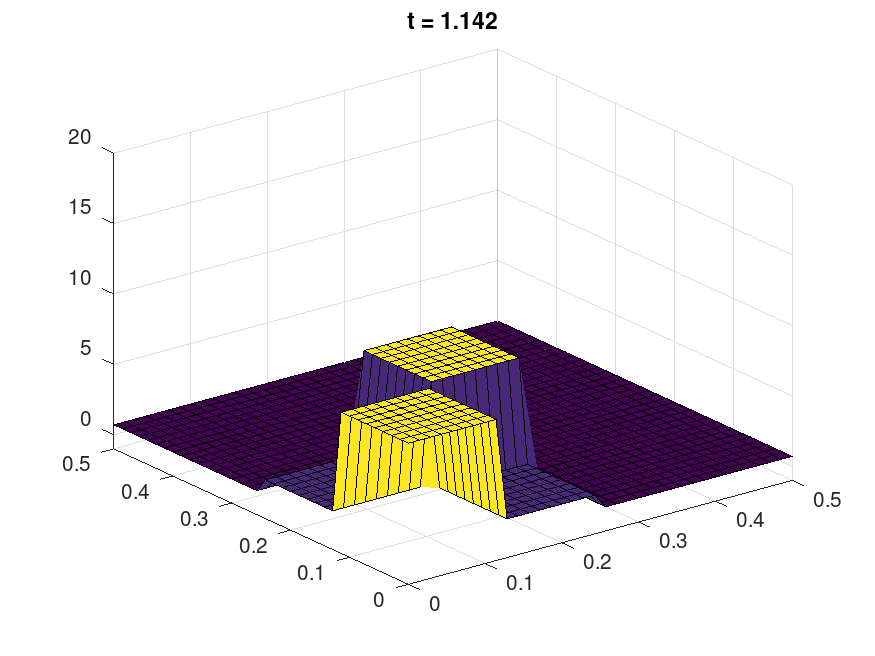} & \includegraphics[width=5cm]{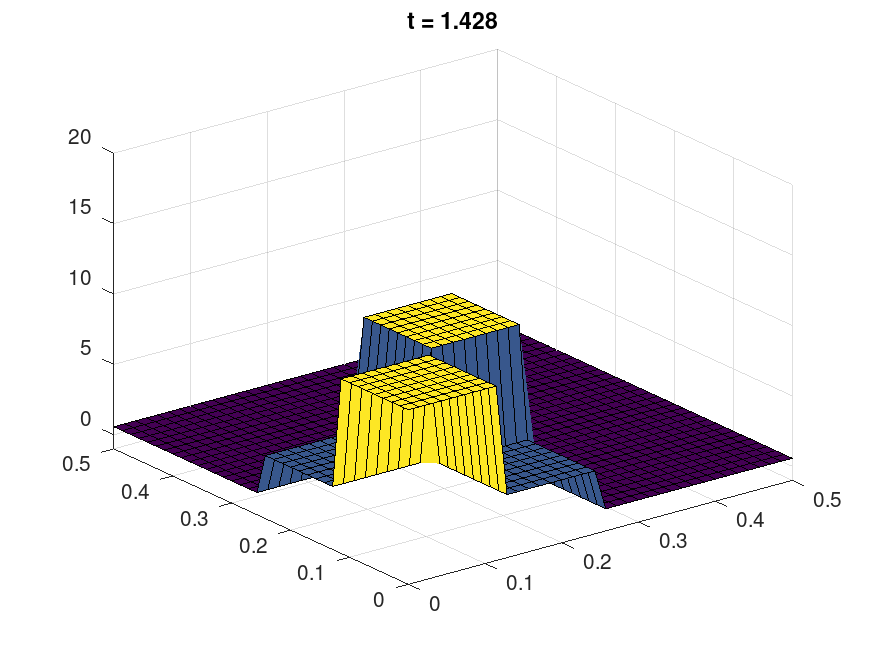}  \\
	\hline
	\includegraphics[width=5cm]{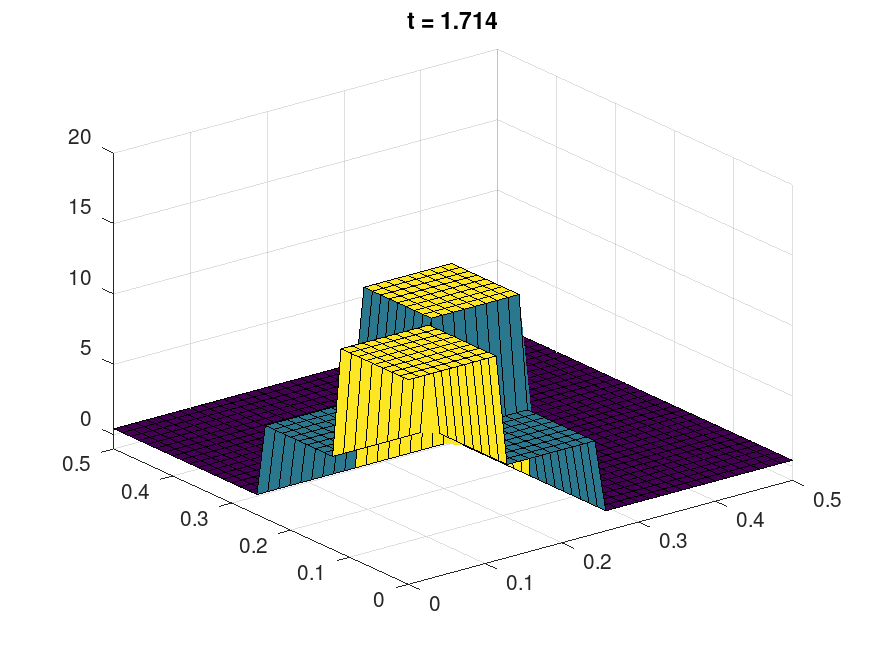} &
	\includegraphics[width=5cm]{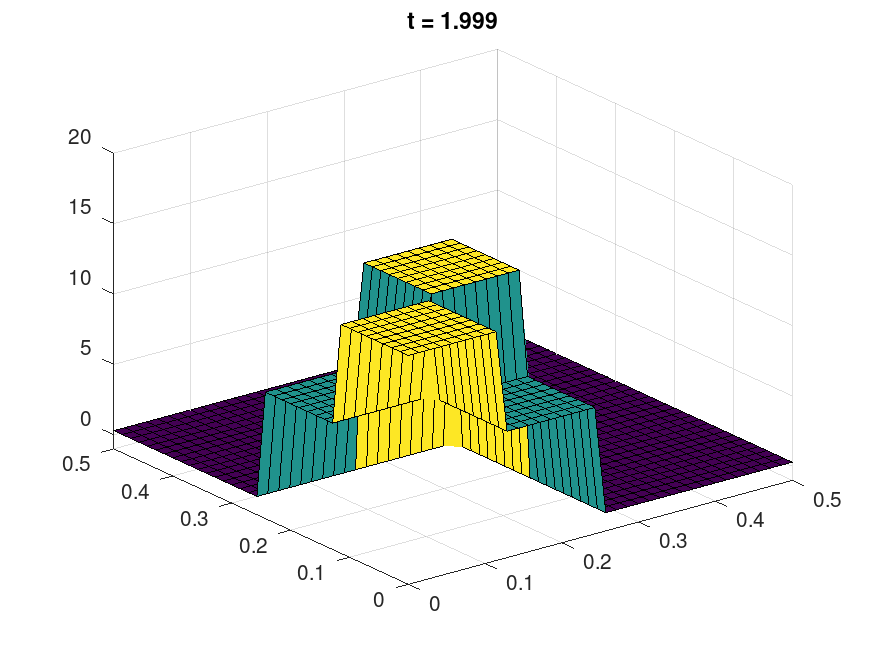} & \includegraphics[width=5cm]{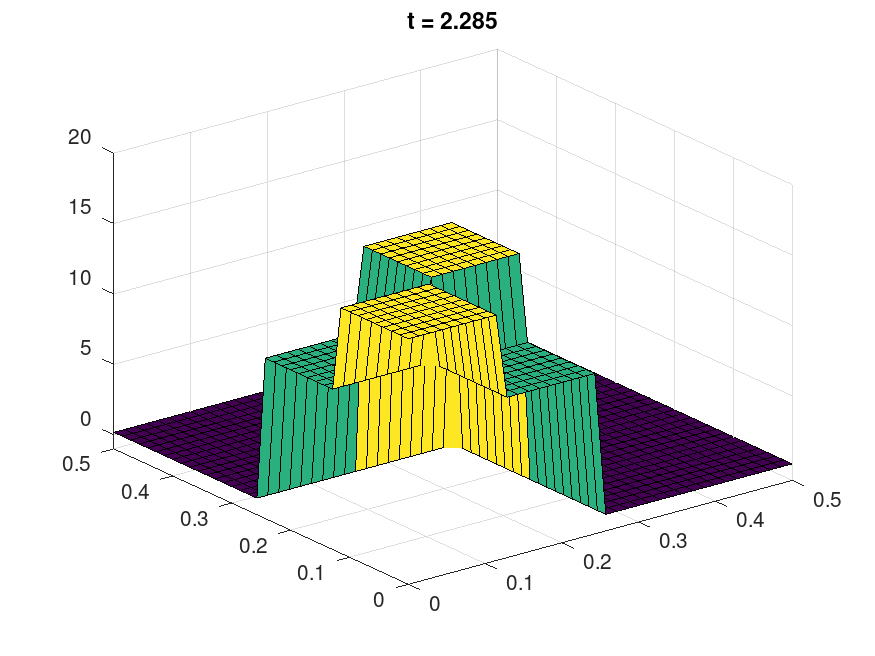} \\
	\hline
	\includegraphics[width=5cm]{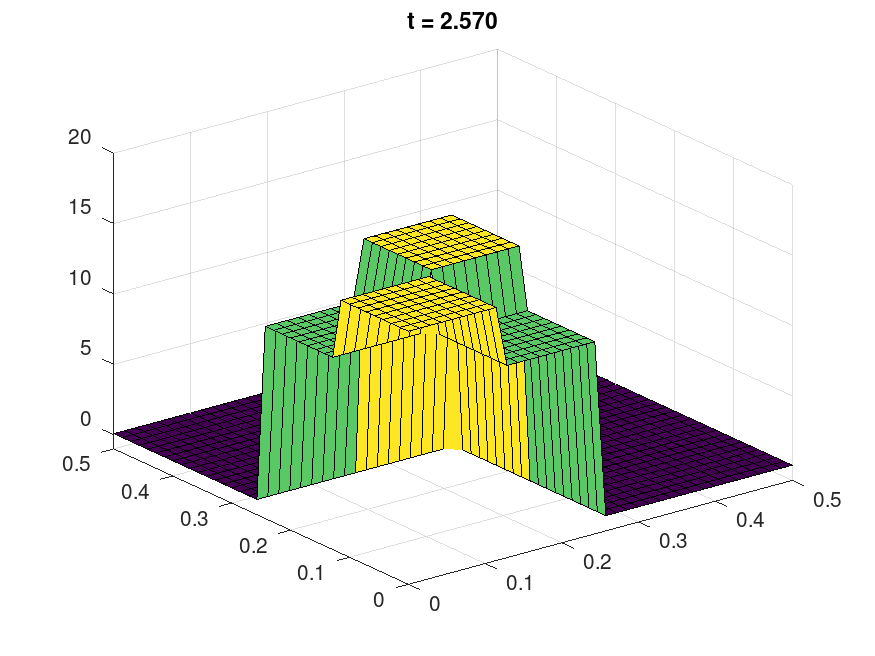} &
	\includegraphics[width=5cm]{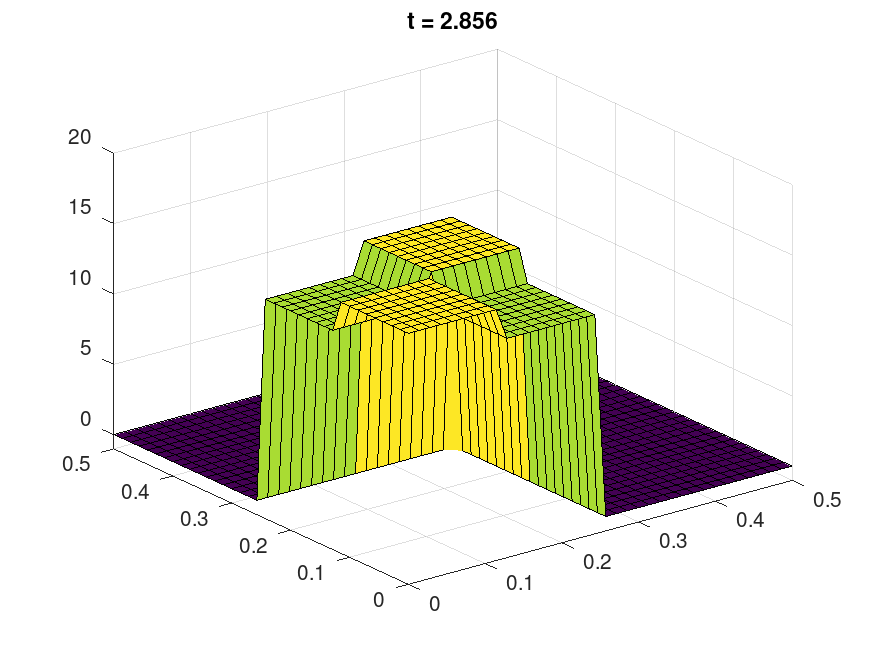} & \includegraphics[width=5cm]{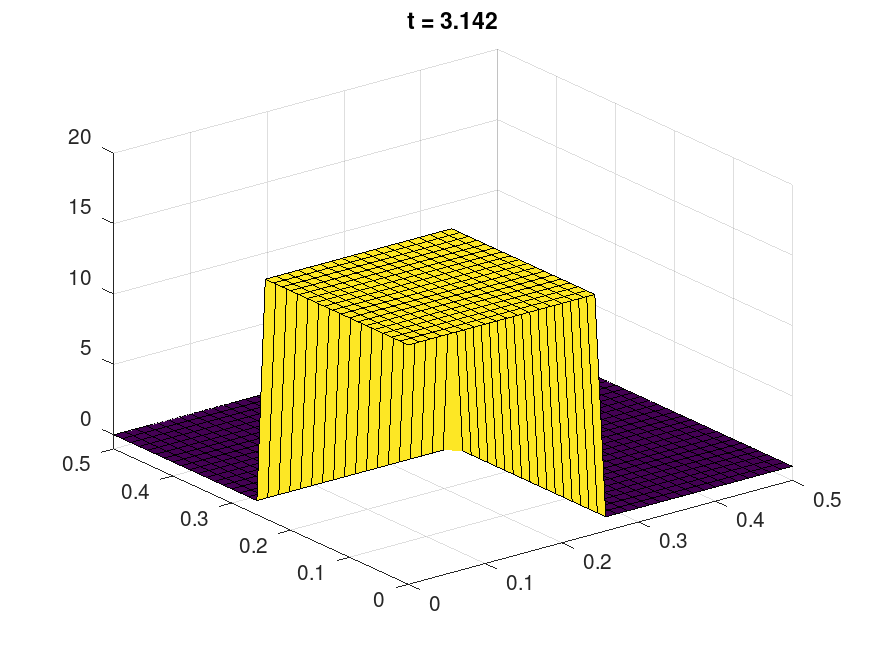} \\
	\hline
\end{tabular}
\caption{$g_{0 2}$}\label{fig:figure632}
\end{figure} 
\begin{figure}
\begin{tabular}{|c|c|c|}
	\hline
	\includegraphics[width=5cm]{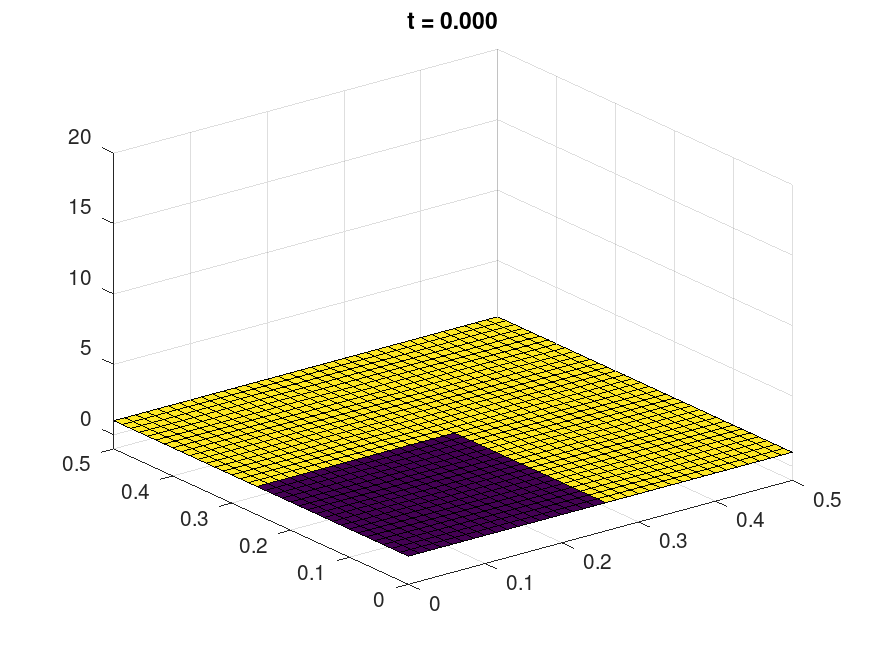} & \includegraphics[width=5cm]{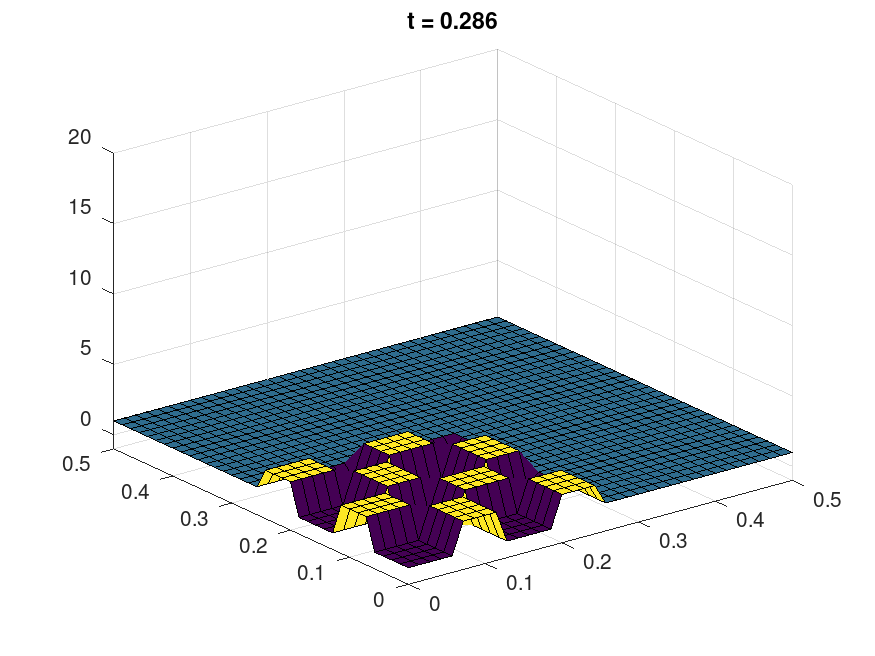} &  \includegraphics[width=5cm]{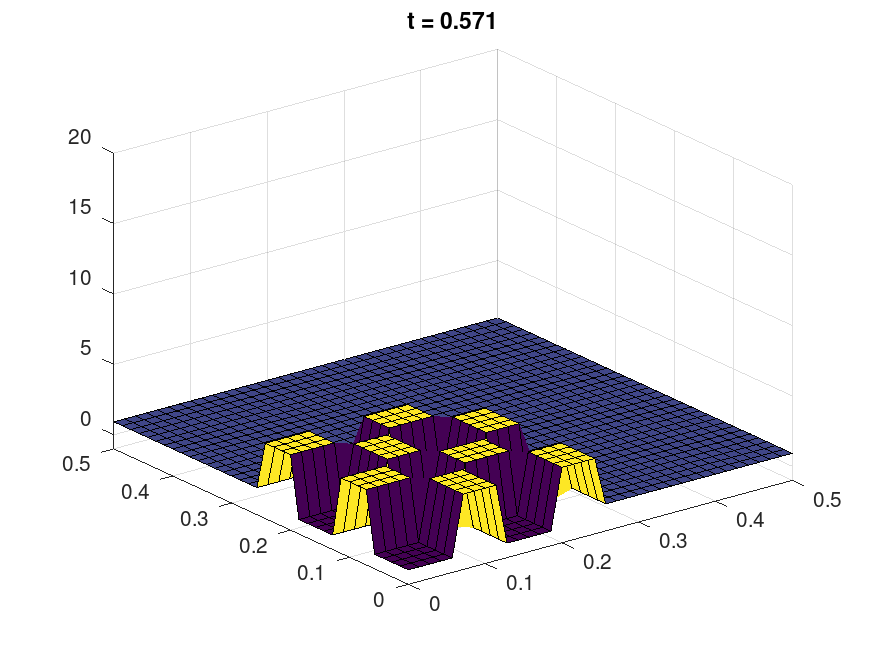}\\
	\hline
	\includegraphics[width=5cm]{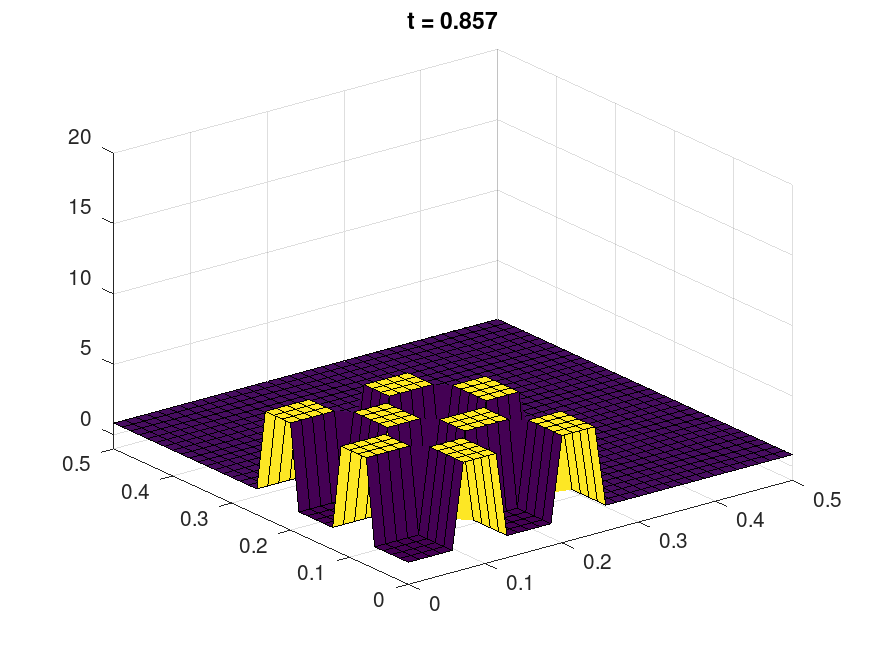} &
	\includegraphics[width=5cm]{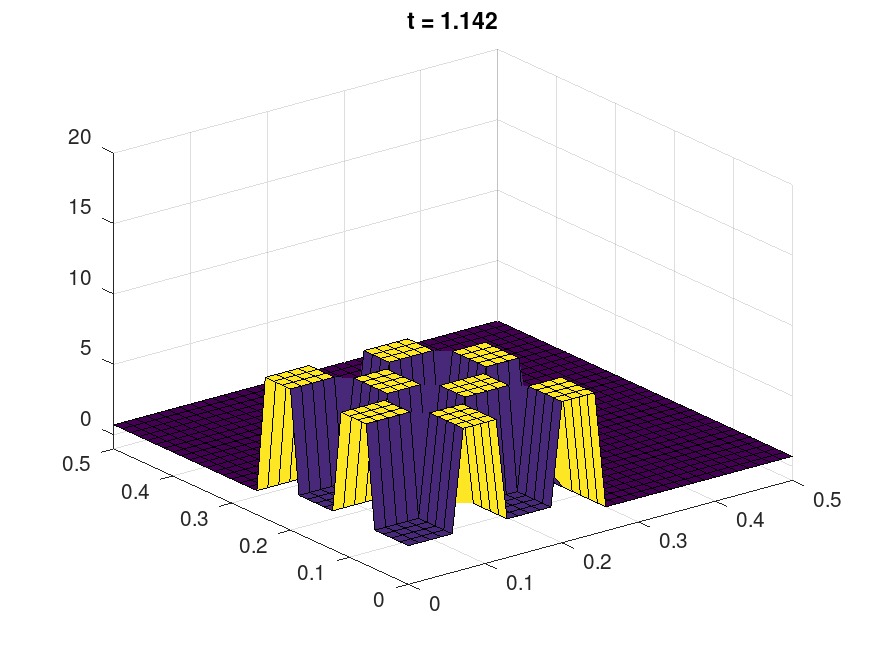} & \includegraphics[width=5cm]{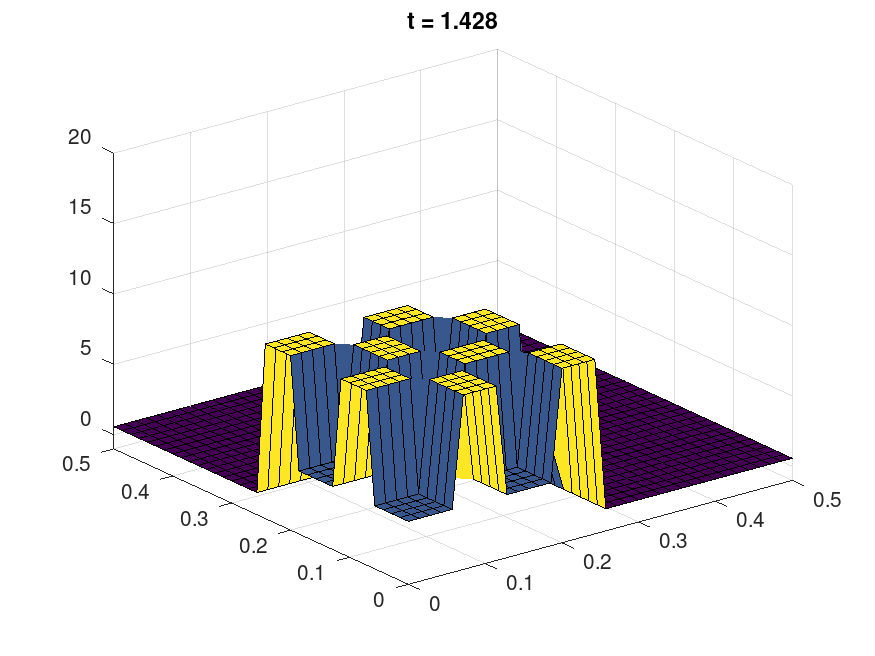}  \\
	\hline
	\includegraphics[width=5cm]{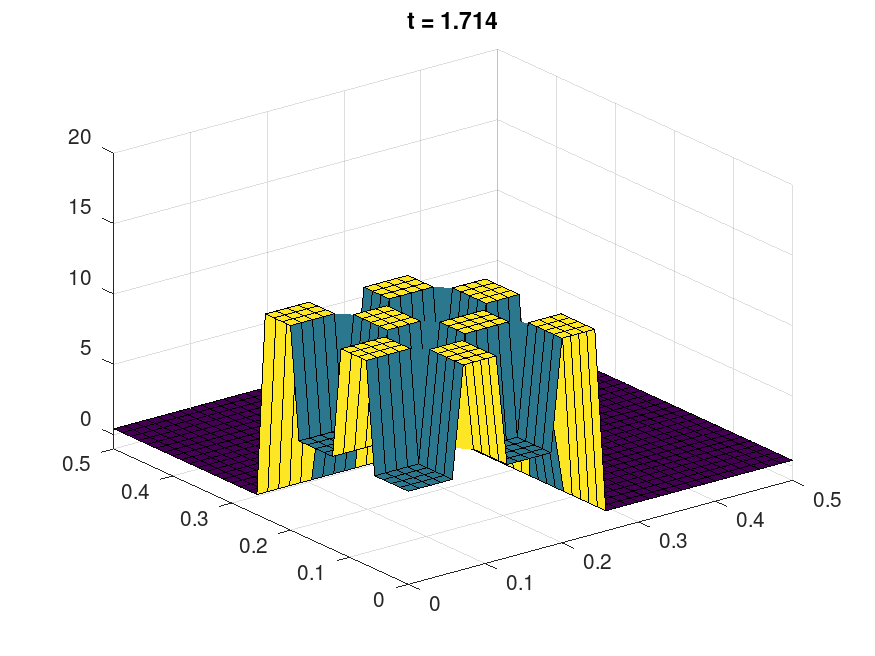} &
	\includegraphics[width=5cm]{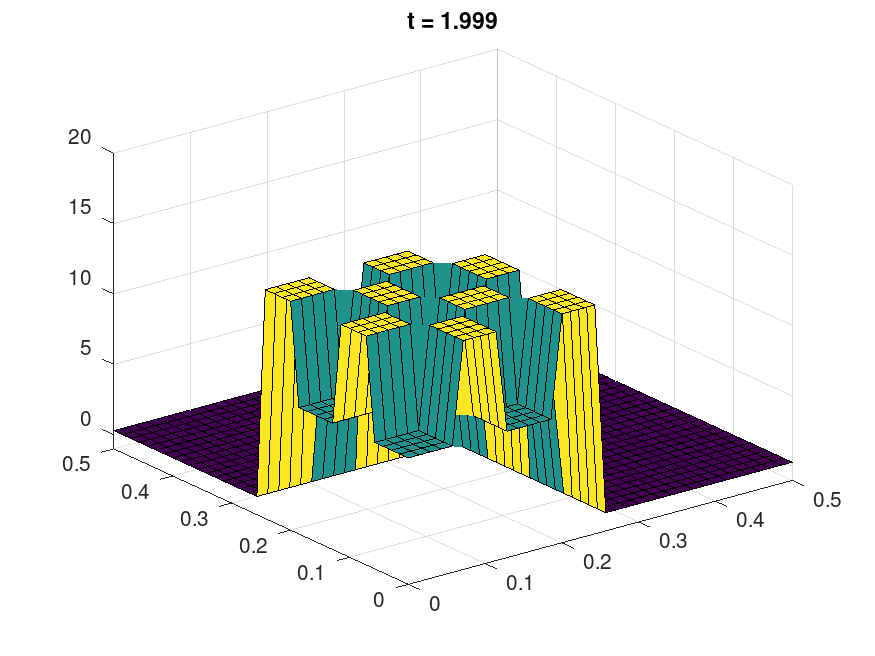} & \includegraphics[width=5cm]{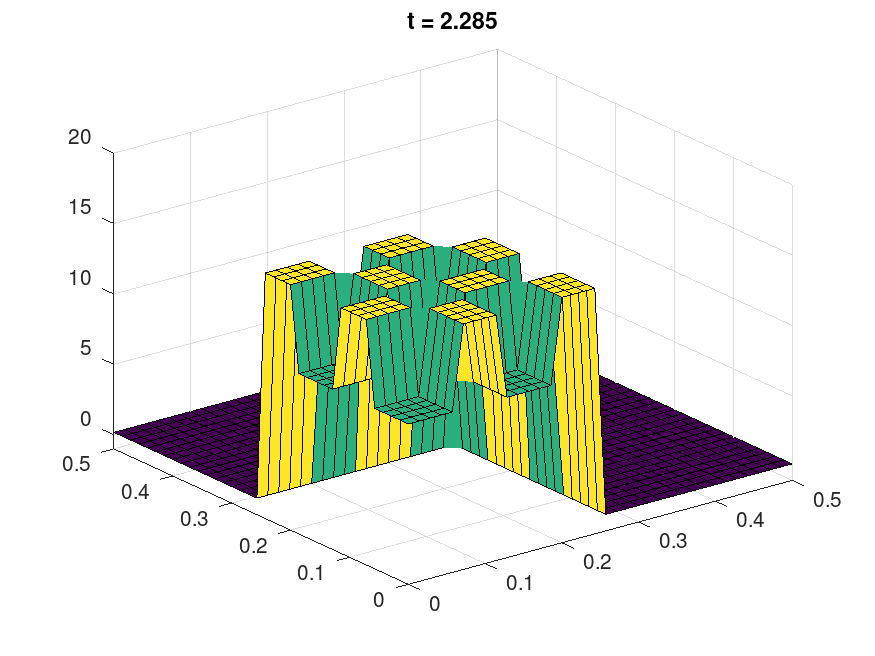} \\
	\hline
	\includegraphics[width=5cm]{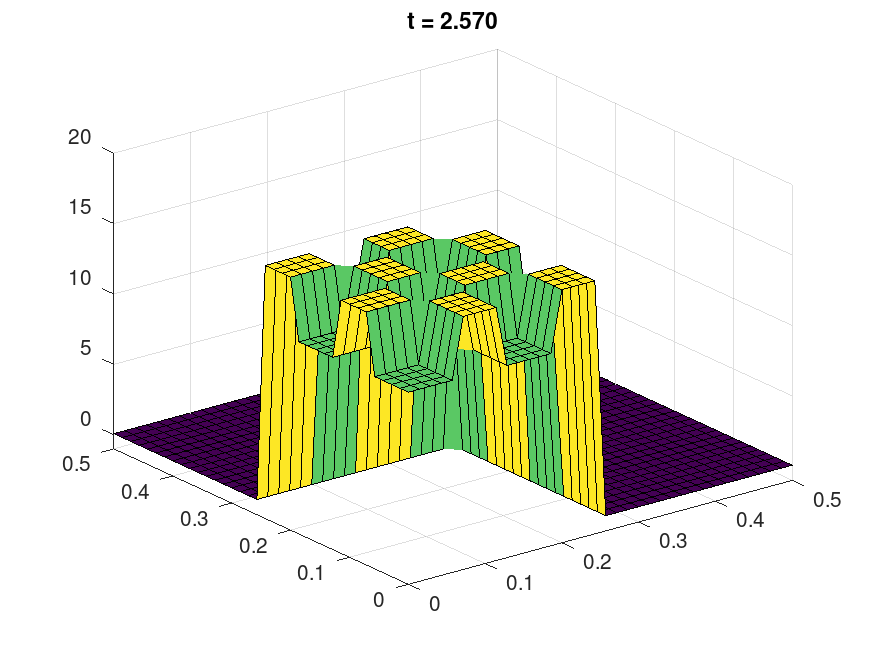} &
	\includegraphics[width=5cm]{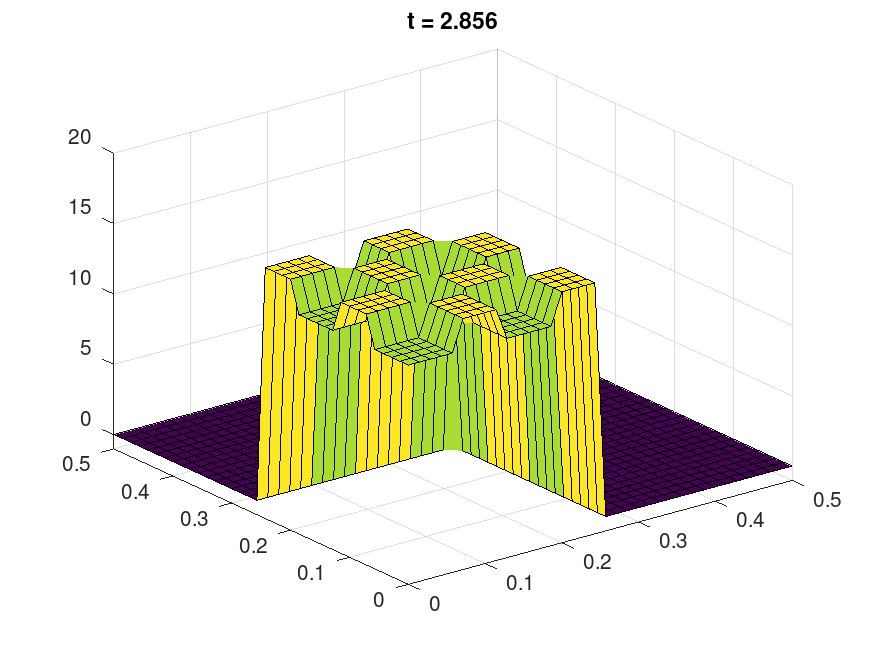} & \includegraphics[width=5cm]{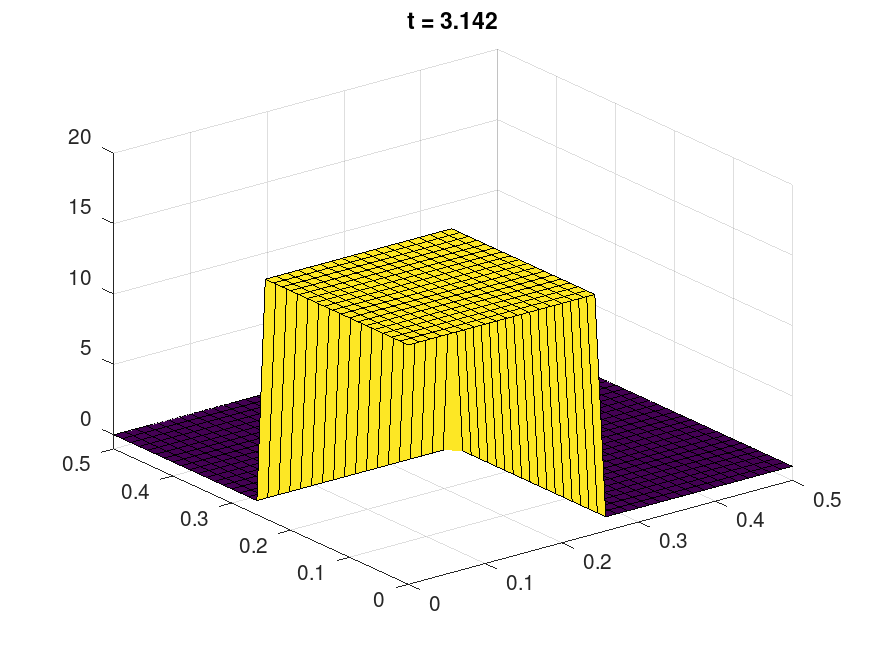} \\
	\hline
\end{tabular}
\caption{$g_{0 3}$}\label{fig:figure633}
\end{figure}

We may also have a global view of this Fisher-Riemann transport by looking at the dynamics of mean and variance. See Figure~\ref{fig:figure64}, page~\pageref{fig:figure64}.
\begin{figure}
\begin{tabular}{c c c c}
	\includegraphics[width=6cm]{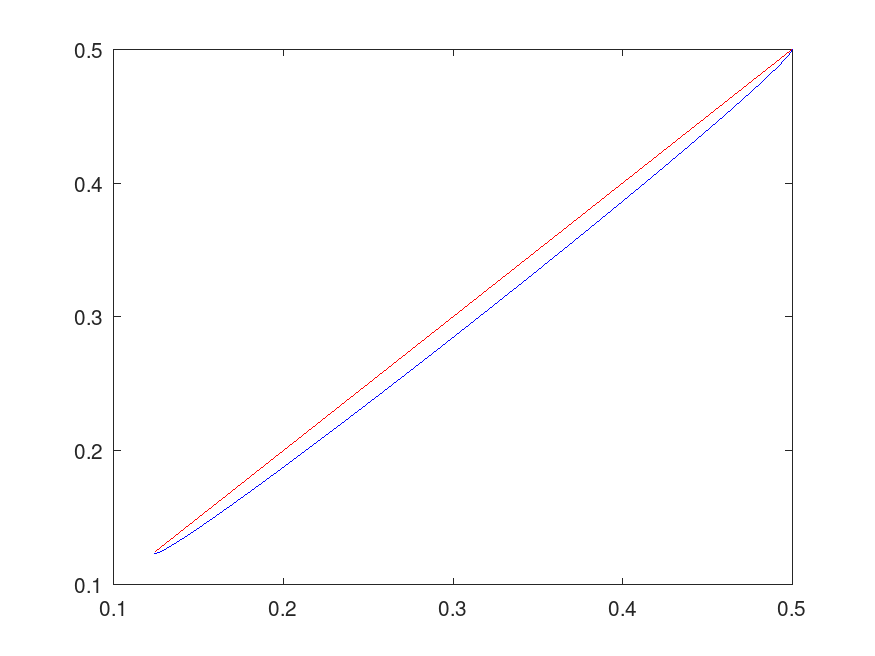} &
 	\includegraphics[width=6cm]{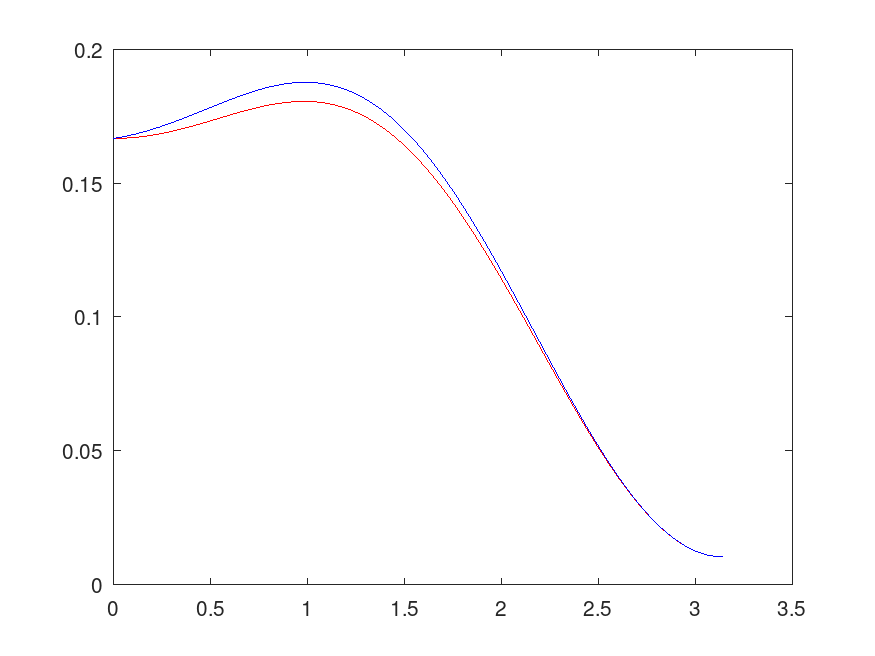}
 \end{tabular}
	\caption{Mean (left) and variance (right) of the trajectories in the 2-dimensional case. The blue curve is obtained with initial velocity $g_{0 1}$ while the red curve is obtained for both $g_{0 2}$ and $g_{0 3}$. }\label{fig:figure64}
\end{figure}

\newpage


\providecommand{\bysame}{\leavevmode\hbox to3em{\hrulefill}\thinspace}
\providecommand{\MR}{\relax\ifhmode\unskip\space\fi MR }
\providecommand{\MRhref}[2]{%
	\href{http://www.ams.org/mathscinet-getitem?mr=#1}{#2}
}
\providecommand{\href}[2]{#2}

\section*{Declarations}

\subsection*{Acknowledgements}
This work was supported by Consejo Nacional de Investigaciones Cient\'ificas y T\'ecnicas - CONICET and Universidad Nacional del Litoral - UNL in Argentina.

\subsection*{Funding}
Consejo Nacional de Investigaciones Cient\'ificas y T\'ecnicas, grant PIP-2021-2023-11220200101940CO.

\subsection*{Conflicts of interest/Competing interests}
The authors have no conflicts of interest to declare that are relevant to the content of this article.

\subsection*{Availability of data and material}
Not applicable.



\bigskip

%

\noindent{\footnotesize
	\noindent\textit{Affiliation 1.\,}
	\textsc{Instituto de Matem\'{a}tica Aplicada del Litoral ``Dra. Eleonor Harboure'', UNL, CONICET.}

	\smallskip
	\noindent\textit{Address.\,}\textmd{CCT CONICET Santa Fe, Predio ``Dr. Alberto Cassano'', Colectora Ruta Nac.~168 km 0, Paraje El Pozo, S3007ABA Santa Fe, Argentina.}

	\smallskip
	\noindent \textit{E-mail address.\, }\verb|haimar@santafe-conicet.gov.ar|; \verb|ivanagomez@santafe-conicet.gov.ar| 
}

\bigskip

\noindent{\footnotesize
	\noindent\textit{Affiliation 2.\,}
	\textsc{Facultad de Ingenier\'ia Qu\'imica, UNL, CONICET.}

	\smallskip
	\noindent\textit{Address.\,}\textmd{Santiago del Estero 2829, 3000 Santa Fe, Argentina.}

	\smallskip
	\noindent \textit{E-mail address.\, }\verb|anibalchicco@gmail.com|
}
\end{document}